\date{}
\newcommand{\red}[1]{{\color{red} #1}}
\newcommand{\negr}[1]{\boldsymbol{#1}}
\newenvironment{dem}{\noindent\bf Proof.\rm}{\hfill $\negr{\Box}$}
\newtheorem{proposition}{\bf Proposition}[section]
\newtheorem{theorem}[proposition]{\bf Theorem}
\newtheorem{lemma}[proposition]{\bf Lemma}
\newtheorem{facts}[proposition]{\bf Facts}
\newtheorem{corollary}[proposition]{\bf Corollary}
\newtheorem{remark}[proposition]{\bf Remark}
\newtheorem{remarks}[proposition]{\bf Remarks}
\newtheorem{example}[proposition]{\bf Example}
\newtheorem{definition}[proposition]{\bf Definition}
\newtheorem{defi}[proposition]{\bf Definition}
\newtheorem{prop}[proposition]{\bf Proposition}
\newtheorem{lem}[proposition]{\bf Lemma}
\newtheorem{theo}[proposition]{\bf Theorem}
\def\iml{\!\rightarrowtail\!}
\def\imn{\!\rightarrow_{(n-1)}\!}
\def\imk{\!\rightarrow_{k}\!}
\def\ssi{\!\leftrightarrow\!}
\newenvironment{proof}{\noindent\bf Proof.\rm}{\hfill $\negr{\Box}$} 
\def\fun{\longrightarrow}
\def\max{\mathop{\sf Max}\nolimits}
\def\min{\mathop{\sf Min}\nolimits}
\newcommand{\free}[2]{\mathop{{\sf  Free}_{#1}}(#2)}
\def\A{\mathbf{A}}
\def\B{\mathbf{B}}
\def\S{\mathbf{S}}
\def\P{\mathcal{P}}
\def\F{\mathcal{F}}
\def\B{\mathbf{B}}
\def\C{\mathcal{C}}
\def\D{\mathcal{D}}
\def\E{\mathcal{E}}
\def\red{\color{red}}
\newcommand\restr[2]{{
  \left.\kern-\nulldelimiterspace 
  #1 
  \vphantom{\big|} 
  \right|_{#2} 
  }}
\def\Lukinf{{\cal \L$_{\infty}$ }}
\def\lukres{\mbox{\bf{\L}R}}
\def\luklog{{\cal {\L}H}^{\Delta}}
\newcounter{defcounter}
\newenvironment{myequation}{%
\addtocounter{equation}{-1}
\refstepcounter{defcounter}

\begin{equation}}
{\end{equation}}
\title{Super-\L ukasiewicz logics expanded by $\Delta$}
\author{A. V. Figallo$^a$,  A. Figallo-Orellano\footnote{E-mail: \texttt{aldofigallo@gmail.com}}\, and  M. Figallo\footnote{E-mail:\texttt{figallomartin@gmail.com}}  \\ [2mm] %
{\small $^a$ Instituto de Ciencias B\'asicas, Universidad Nacional de San Juan (UNSJ),} \\ {\small San Juan, Argentina,}\\
{\small $^\ast$Departamento de Informática e Matemática Aplicada, Universidade Federal do Rio Grande do Norte (UFRN),} \\ {\small Natal, Brazil,}\\
{\small $^\dag$Departamento de Matem\'atica and Instituto de Matemática (INMABB), Universidad Nacional del Sur (UNS),}\\
{\small  Bahia Blanca, Argentina}}
\begin{document}



 \maketitle

\begin{abstract}
Baaz's  operator $\Delta$ was introduced (by Baaz) in order to extend G\"odel logics, after that this operator was used to expand fuzzy logics by H\'ajek in his celebrated book. These logics were called $\Delta$-fuzzy logics. On the other hand, possibility operators were studied in the setting of \L ukasiewicz-Moisil algebras; curiously, one of these operators coincide with the Baaz's one. In this paper, we study the $\Delta$ operator in the context of  ($n$-valued) Super-\L ukasiewicz logics. An algebraic study of these logics is presented and the cardinality of Lindembaun-Tarski algebra with a finite number of variables is given. Finally, as a by-product, we present an alternative axiomatization of H\'ajek's \L ukasiwicz logic expanded with $\Delta$.
\end{abstract}

\

{\bf Keywords:}  Super \L ukasiewicz logics, Baaz's operator, free algebras.

\section{Introduction and preliminaries}

As it is well-know, J. {\L}ukasiewicz introduced, in the early twenties of the last century, a class of propositional calculi by means of matrices  which has either some finite set of rationals within the real unit interval, or the whole unit interval as the truth values set, being the value $1$ the only designated truth value.
With  \Lukinf we shall denote the infinite--valued \L ukasiewicz propositional logic.
Recall that \Lukinf can be presented as the logic defined over the absolute free algebra of type $(2,1)$ with the axioms:
$$\phi \iml (\psi \iml \phi)$$
$$(\phi \iml \psi)\iml ((\psi \iml \eta)\iml (\phi \iml \eta))$$
$$((\phi\iml \psi)\iml \psi) \iml ((\psi\iml \phi)\iml \phi)$$
$$(\neg\phi\rightarrow \neg\psi )\iml (\psi \iml \phi)$$
and with {\em modus ponens} as a primitive rule.

\

B. Bosbach (\cite{Bos1, Bos2}) undertook the investigation of a class of residuated structures that were related to but considerably more general than Brouwerian semilattices and the algebras associated with the $\{\to, \wedge\}$-fragment of \L ukasiewicz's many--valued logics. \\
In a manuscript by J. B\"uchi and T. Owens (\cite{BuOw}) devoted to the study of Bosbach's algebras, written in the mid-seventies, the commutative members of this equational class were given the name {\em hoops}. An important subclass of the variety of hoops is the variety of {\em Wajsberg hoops}, so named and studied by W. Blok and I. Ferreirim (\cite{BlFe}). These algebras constitute the $\{\cdot, \to, 1\}$-subreducts of Wajsberg algebras. An important class of algebras studied by J. Berman and W. Blok in \cite{BB} is that of $\{\to,1\}$-subreducts of Wajsberg hoops which were called by these same authors {\em \L ukasiewicz residuation algebras} .\\
Recall that a  \L ukasiewicz residuation algebra is an algebra $\langle A,  \rightarrowtail, 1\rangle$ of type $(2,0)$ (for short, \lukres-algebra) that satisfies the following identities:

\begin{equation}\tag{\rm \L1} x\rightarrowtail(y\rightarrowtail x)\approx 1
 \end{equation}   
 \begin{equation}\tag{\rm \L2} (x\rightarrowtail y)\rightarrowtail((y\rightarrowtail z)\rightarrowtail(x\rightarrowtail z))\approx 1
 \end{equation}
 \begin{equation}\tag{\rm \L3} (x\rightarrowtail y)\rightarrowtail y \approx  (y\rightarrowtail x)\rightarrowtail x
 \end{equation}
  \begin{equation}\tag{\rm \L4}((x\rightarrowtail y)\rightarrowtail(y\rightarrowtail x))\rightarrowtail(y\rightarrowtail x)\approx 1
 \end{equation}
   \begin{equation}\tag{\rm \L5}1\rightarrowtail x\approx x
 \end{equation}
 
It is well-known that it is possible to define an order over every \L ukasiewicz residuation algebra $\bf A$ as follows: $x\leq y$ iff $x \rightarrowtail y=1$. Moreover, it is possible to define a supremum for every $x,y\in A$ through $x\vee y:= (x \rightarrowtail y) \rightarrowtail y$; and, also, we have that $z\leq 1$ for every $z\in A$. So, the axiom \L 4 can be expressed by $(x\rightarrowtail y) \vee (y\rightarrowtail x)=1$. 

Independently, Rodriguez Salas in 1980 (\cite{RS}) introduced Wajsberg algebras \break (or {\bf W}-algebras) as algebras $\A=\langle A, \iml, \neg, 1\rangle$ of type $(2, 1, 0)$ which satisfy  (\L5), (\L2) and (\L3) with the additional identity
$$(\neg y\iml \neg x)\iml (x\iml y) \approx 1$$
It was proved that Wajsberg algebras constitute a natural algebraization of the calculus \Lukinf. Besides, this same author defined and addition operation $+$ on $A$ as follows: for every $x,y\in A$
$$x+y=\neg y \iml x$$ 
and a product operation between a natural number $n$ and an element $x\in A$ as: \, $0\cdot a=0$, where $0\in A$ is $\neg 1$ and it is the first element of $\A$; and \, $(n+1)\cdot a = n\cdot a + a$. Then, he defined the {\em $n$-valued Wajsberg algebras} (or {\bf W}$_{n}$-algebras), $n\geq2$, as Wajsberg algebras satisfying the identity
$$\neg x \vee (n-1)\cdot x \approx 1$$

\

\noindent It is well-known that, on every \L ukasiewicz residuation algebra (or Wajsberg algebra), it is possible to define the binary operator  $\rightarrow_n$ as follows: $x \rightarrow_{0} y:=y$ \, and $x\rightarrow_{n+1} y:= x \rightarrowtail (x \rightarrow_{n} y)$, $n>0$. Then, a \L ukasiewicz residuation algebra $\langle A,  \rightarrowtail, 1\rangle$ is said to be $n$-valued (or \lukres$_n$-algebra), for $n\geq 2$, if it verifies:
\begin{equation}\tag{\rm \L6} (x\rightarrow_{(n-1)} y) \vee x\approx 1.
 \end{equation}
Note that, if $\A$ is a $\lukres_{n}$-algebra then, for every $x,y\in A$ we have that 
$$1=(x\imn y) \vee x \leq (x\rightarrow_{n}y)\vee x$$
and so, $\A$ is also a $\lukres_{n+1}$-algebra. In general, every $\lukres_n$-algebra $\A$ also is a $\lukres_{(n+k)}$-algebra for $k\geq0$.

\noindent At this point, we think it is important to recall some facts.

\begin{facts}\label{exam1}\begin{enumerate}[\rm (i)]
\item[]
\item We call \lukres$_0$-algebra to any algebra $\langle A, \iml, 1, 0\rangle$ of type $(2,0,0)$ such that the reduct \break $\langle A, \iml, 1\rangle \in \mbox{\lukres}$ and that it satisfies the identity
$$0\iml x\approx 1$$
If $\A=\langle A, \iml, 1, 0\rangle \in \lukres_0$, by ${\bf W}(\A)$ we denote the Wajsberg algebra $\langle A, \iml, \neg, 1\rangle$ where $\neg x:=x\iml 0$, for every $x\in A$. If $\A=\langle A, \iml, \neg, 1\rangle \in {\bf W}$ and we define $0:=\neg 1$, then \lukres$_0(\A)$ denotes the algebra $\langle A, \iml, 1, 0\rangle\in \mbox{\lukres}_0$; and \lukres$(\A)$ denotes the \lukres-algebra $\langle A, \iml, 1\rangle\in \lukres$.

\item If we consider over the unit real interval $[0,1]$ the \L ukasiewicz implication and negation defined as $x\iml y := \min\{1,1-x+y\}$ and $\neg x := 1 - x$, for $x,y\in[0,1]$, respectively, then $\langle [0,1], \iml, 1\rangle \in \mbox{\lukres}$,  $\langle [0,1], \iml, 1,0\rangle \in \mbox{\lukres}_0$  and $\langle [0,1], \iml, \neg, 1\rangle \in {\bf W}$.

\item  For each $n\geq 2$, let $\L_n=\{0, \frac{1}{n-1}, \dots, \frac{n-2}{n-1}, 1\}$. Then, {\bf \L}$_{n}=\langle \L_n, \iml, 1\rangle\in \mbox{\lukres}$ is the well-known \L ukasiewicz $n$-element chain, {\bf \L}$^{0}_{n}=\langle \{0, \frac{1}{n-1}, \dots, \frac{n-2}{n-1}, 1\}, \iml, 1, 0\rangle\in \mbox{\lukres}_0$ and {\bf \L}$^{\bf W}_{n}=\langle \{0, \frac{1}{n-1}, \dots, \frac{n-2}{n-1}, 1\}, \iml, \neg, 1\rangle\in {\bf W}$ where $\iml$ and $\neg$ are defined as in {\rm (ii)}. It is clear that ${\bf W}(\mbox{{\bf \L}}^{0}_{n})=\mbox{{\bf \L}}^{\bf W}_{n}$, $\mbox{\lukres}_0(\mbox{{\bf \L}}^{\bf W}_{n})=\mbox{{\bf \L}}^{0}_{n}$ and $\mbox{\lukres}(\mbox{{\bf \L}}^{\bf W}_{n})=\mbox{{\bf \L}}_{n}$.
\end{enumerate}
\end{facts}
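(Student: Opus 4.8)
The plan is to treat the three items separately, since each reduces to a routine check but of a different flavour: item (ii) is a set of numerical identities on $[0,1]$, item (iii) then follows by a ``closed under the operations'' argument, and item (i) is a term-equivalence statement. I would dispatch (ii) first. Note that with $x\iml y=\min\{1,1-x+y\}$ the derived negation is $x\iml 0=\min\{1,1-x\}=1-x$, which matches the $\neg$ of the statement; hence the three structures on $[0,1]$ really are successive expansions/reducts of one another, so it suffices to check the relevant identities once. I would verify (\L1)--(\L5), the bottom axiom $0\iml x\approx 1$ and the Wajsberg negation axiom $(\neg y\iml\neg x)\iml(x\iml y)\approx 1$ on $[0,1]$ by the standard case analysis on which truncations are active; equivalently, rewriting everything through the truncated sum, each identity becomes a trivial (in)equality between real numbers. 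This is classical ($[0,1]$ with these operations is the standard MV-algebra), so I would record only one representative computation.

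For (iii), the key observation is that $\L_n=\{0,\frac{1}{n-1},\dots,1\}$ is closed under $x\iml y=\min\{1,1-x+y\}$ (a truncated difference of integer multiples of $\frac{1}{n-1}$ lying in $[0,1]$ is again such a multiple) and under $\neg x=1-x$, and contains $1$ and $0$. Hence ${\bf \L}_n$, ${\bf \L}^0_n$ and ${\bf \L}^{\bf W}_n$ are subalgebras, in the respective types, of the structures considered in (ii); therefore they satisfy the same identities and lie in $\lukres$, $\lukres_0$ and ${\bf W}$. The three equalities ${\bf W}({\bf \L}^0_n)={\bf \L}^{\bf W}_n$, $\lukres_0({\bf \L}^{\bf W}_n)={\bf \L}^0_n$ and $\lukres({\bf \L}^{\bf W}_n)={\bf \L}_n$ are then read off from the definitions, using that on $\L_n$ one has $x\iml 0=1-x$ and $\neg 1=0$, so that the operation added or removed at each step coincides with the one already present.

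For (i), in the direction $\lukres_0\to{\bf W}$ the reduct $\langle A,\iml,1\rangle$ already supplies (\L1)--(\L5), so the only nontrivial point is to show that $\neg x:=x\iml 0$ satisfies the Wajsberg negation axiom; I would derive it equationally from (\L1)--(\L5) together with $0\iml x\approx 1$, working with the definable order $x\leq y\iff x\iml y=1$ and the definable join $x\vee y=(x\iml y)\iml y$. In the direction ${\bf W}\to\lukres_0$, taking $0:=\neg 1$, I must check $\neg 1\iml x\approx 1$ and $x\iml\neg 1\approx\neg x$, the latter guaranteeing that reintroducing the negation recovers the original one; both are standard Wajsberg-algebra identities. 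Passing to $\lukres(-)$ is merely forgetting a constant or an operation and needs nothing.

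The single step that is not a mechanical verification is the equational derivation, in item (i), of the Wajsberg negation axiom from (\L1)--(\L5) and $0\iml x\approx 1$; rather than grind through it one may invoke the known term equivalence between Wajsberg algebras and $\lukres_0$-algebras (see \cite{RS}), of which item (i) is a reformulation, and otherwise all that remains are the truncated-arithmetic computations of (ii) and the subalgebra/inheritance argument of (iii).
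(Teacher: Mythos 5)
Your verification is correct, and it matches how the paper treats this statement: the paper records these items as \emph{Facts} without proof, since (i) is essentially notation plus the classical term equivalence between bounded \L ukasiewicz residuation algebras and Wajsberg algebras (cf.\ \cite{RS}), while (ii) and (iii) are the standard truncated-arithmetic check on $[0,1]$ and the closure of $\L_n$ under $\iml$ and $\neg$. Your filling-in of these routine details (including flagging the only non-mechanical point, the derivation of the negation axiom from (\L1)--(\L5) and $0\iml x\approx 1$, and resolving it by the known term equivalence) is exactly the justification the paper implicitly relies on.
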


\

\begin{proposition}  In every \lukres-algebra, the following hold ($k\geq 0$). 
 \begin{multicols}{2}
\begin{enumerate}
  \item[\rm (\L7)] $x\rightarrowtail 1\approx 1,$

  \item[\rm (\L8)] $x\preceq y$ implies $y\rightarrowtail z\preceq x\rightarrowtail z,$ 
  \item[\rm  (\L9)] $x\rightarrowtail(y\rightarrowtail z)\approx y\rightarrowtail(x\rightarrowtail z),$
  \item[\rm  (\L10)] $x\rightarrowtail x\approx 1,$
  \item[\rm  (\L11)] $x\preceq y$ implies $z\imk x\preceq z\imk y$, 
   \item[\rm  (\L12)] $y\preceq x\rightarrowtail y,$
  \item[\rm  (\L13)] $(x\rightarrowtail y)\rightarrowtail(x\rightarrowtail z) \preceq x\rightarrowtail(y\rightarrowtail z),$
  \item[\rm  (\L14)] $(x\vee y)\rightarrowtail y\approx x\rightarrowtail y,$
  \item[\rm  (\L15)] $(x\rightarrowtail y)\rightarrowtail((z\rightarrowtail x)\rightarrowtail(z\rightarrowtail y))\approx 1,$
\item[\rm (\L 16)] $x\rightarrow_{k}(y\rightarrowtail z) = y \rightarrowtail (x\rightarrow_{k} z)$.
\end{enumerate}
\end{multicols}
\end{proposition}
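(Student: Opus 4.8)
The plan is to prove the ten items in an order that lets each one rest only on (\L1)--(\L5) and on the previously established items, isolating the single non-routine step. \emph{First the order and monotonicity.} From (\L1) with $x:=1$ and from (\L5) one gets (\L7): $1\rightarrowtail(y\rightarrowtail 1)=1$ while $1\rightarrowtail(y\rightarrowtail 1)=y\rightarrowtail 1$, hence $y\rightarrowtail 1=1$. From (\L3) with $y:=1$, together with (\L7) and (\L5), one gets (\L10): the instance $(x\rightarrowtail 1)\rightarrowtail 1=(1\rightarrowtail x)\rightarrowtail x$ reads $1=x\rightarrowtail x$. Consequently the relation $x\preceq y$ (i.e.\ $x\rightarrowtail y=1$) is a partial order: reflexivity is (\L10); transitivity is the instance $(x\rightarrowtail y)\rightarrowtail((y\rightarrowtail z)\rightarrowtail(x\rightarrowtail z))=1$ of (\L2) collapsed twice by (\L5); antisymmetry is (\L3) plus (\L5) ($x\rightarrowtail y=y\rightarrowtail x=1$ forces $1\rightarrowtail y=1\rightarrowtail x$); $1$ is greatest by (\L7); and (\L12) is nothing but (\L1) read through $\preceq$. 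The two monotonicity facts come directly from (\L2) and (\L5): if $x\rightarrowtail y=1$ the instance $(x\rightarrowtail y)\rightarrowtail((y\rightarrowtail z)\rightarrowtail(x\rightarrowtail z))=1$ yields (\L8), and the instance $(c\rightarrowtail a)\rightarrowtail((a\rightarrowtail b)\rightarrowtail(c\rightarrowtail b))=1$ yields ``$a\preceq b$ implies $c\rightarrowtail a\preceq c\rightarrowtail b$''. With the latter, (\L11) is an immediate induction on $k$, since $z\rightarrow_{k+1}w=z\rightarrowtail(z\rightarrow_{k}w)$.

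\emph{The main obstacle is (\L9).} By the symmetry of the statement in $x$ and $y$ it suffices to prove $x\rightarrowtail(y\rightarrowtail z)\preceq y\rightarrowtail(x\rightarrowtail z)$, and I expect this to be the only step that is not a routine collapse of an axiom instance: it cannot follow from (\L1), (\L2), (\L5) by themselves --- those furnish only the $\{B,K\}$-fragment, which does not prove permutation --- so (\L3) has to enter essentially, through the commutativity of $x\vee y:=(x\rightarrowtail y)\rightarrowtail y$. The concrete plan is first to check, from (\L3) and (\L8), that $x\vee y$ is the least upper bound of $\{x,y\}$, that $x\preceq y$ forces $x\vee y=y$, and --- as the text already records --- that (\L4) amounts to $(x\rightarrowtail y)\vee(y\rightarrowtail x)=1$; from these, the standard Wajsberg-algebra computation delivers permutation. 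Conceptually the identity is forced in any case: by the Berman--Blok characterization recalled in the introduction, \lukres-algebras are precisely the $\{\rightarrowtail,1\}$-subreducts of Wajsberg hoops, and (\L9), being a $\{\rightarrowtail,1\}$-identity valid in every Wajsberg hoop by residuation and commutativity of the hoop product, holds in all of them. I expect the bookkeeping in this step to be the hardest part of the whole proposition.

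\emph{With (\L9) available, the rest is short.} (\L15) is the instance $(z\rightarrowtail x)\rightarrowtail((x\rightarrowtail y)\rightarrowtail(z\rightarrowtail y))=1$ of (\L2) with its two antecedents permuted via (\L9). For (\L13): (\L9) gives $(x\rightarrowtail y)\rightarrowtail(x\rightarrowtail z)=x\rightarrowtail((x\rightarrowtail y)\rightarrowtail z)$, while $y\preceq x\rightarrowtail y$ (by (\L12)) and (\L8) give $(x\rightarrowtail y)\rightarrowtail z\preceq y\rightarrowtail z$, and consequent-monotonicity then yields $x\rightarrowtail((x\rightarrowtail y)\rightarrowtail z)\preceq x\rightarrowtail(y\rightarrowtail z)$. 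For (\L14): $x\preceq x\vee y$ and (\L8) give $(x\vee y)\rightarrowtail y\preceq x\rightarrowtail y$, while putting $w:=x\rightarrowtail y$ and using (\L3) and (\L12) gives $w\preceq(y\rightarrowtail w)\rightarrowtail w=w\vee y=(x\vee y)\rightarrowtail y$, so antisymmetry closes it. Finally (\L16) is an induction on $k$ whose step is $x\rightarrow_{k+1}(y\rightarrowtail z)=x\rightarrowtail(x\rightarrow_{k}(y\rightarrowtail z))=x\rightarrowtail(y\rightarrowtail(x\rightarrow_{k}z))=y\rightarrowtail(x\rightarrowtail(x\rightarrow_{k}z))=y\rightarrowtail(x\rightarrow_{k+1}z)$, using the induction hypothesis and then (\L9).
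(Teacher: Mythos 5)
Your proposal is correct, and its skeleton coincides with the paper's own proof: the paper establishes (\L11) and (\L16) by exactly the inductions you give, and disposes of the remaining items with the single remark that they are consequences of (\L1)--(\L5), which your opening and closing paragraphs simply carry out in detail (all those verifications check out). The one place where you go beyond what the paper records is (\L9): the paper treats it as routine, while you either invoke the Berman--Blok identification of \lukres-algebras with the $\{\rightarrowtail,1\}$-subreducts of Wajsberg hoops, or sketch a derivation through the join and prelinearity, leaving ``the standard Wajsberg-algebra computation'' unexecuted. That placeholder does close, and more cheaply than you anticipate: (\L4) and the full least-upper-bound analysis are not needed. From (\L1) and (\L3) one has $y\preceq (z\rightarrowtail y)\rightarrowtail y=(y\rightarrowtail z)\rightarrowtail z$; the instance of (\L2) with middle term $y\rightarrowtail z$ gives $x\rightarrowtail(y\rightarrowtail z)\preceq\big((y\rightarrowtail z)\rightarrowtail z\big)\rightarrowtail(x\rightarrowtail z)$; and (\L8) applied to the first inequality gives $\big((y\rightarrowtail z)\rightarrowtail z\big)\rightarrowtail(x\rightarrowtail z)\preceq y\rightarrowtail(x\rightarrowtail z)$; transitivity then yields $x\rightarrowtail(y\rightarrowtail z)\preceq y\rightarrowtail(x\rightarrowtail z)$, and symmetry in $x,y$ together with the antisymmetry you already derived from (\L3) and (\L5) gives the identity. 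So your route is sound; the appeal to the external subreduct theorem is legitimate within the paper's framework but unnecessary, since exchange already follows from (\L1), (\L2), (\L3), (\L5) alone, which is presumably what the paper's terse ``consequence of (\L1)--(\L5)'' intends.
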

\begin{dem}  For (\L11) and (\L 16), we use induction on $k$. The rest  are consequence of  {\rm (\L1)}--{\rm (\L5)}. \, (\L 11) : If $k=0$, the assertion holds trivially; and for $k=1$ the assertion is consequence of (\L 2). Let $x,y\in A$ such that $x\leq y$ and suppose that $z\imk x\leq z\imk y$, by the case $k=1$, $z\iml(z\imk x)\leq z\iml(z\imk y)$, i.e, $z\rightarrow_{k+1}x \leq z\rightarrow_{k+1}y$.\\
(\L 16): \, If $k=0$, the assertion holds trivially; and for $k=1$ the equation (\L16) holds by (\L9). Suppose that (I.H.) \, $x\rightarrow_{k}(y\rightarrowtail z) = y \rightarrowtail (x\rightarrow_{k} z)$. Then 
\begin{center}
\begin{tabular}{rcl}
$x\rightarrow_{k+1}(y \rightarrowtail z)$& $=$ & $x\rightarrowtail(x\rightarrow_{k}(y \rightarrowtail z))$ \\
 & $\stackrel{(I.H.)}{=}$ & $x\rightarrowtail(y \rightarrowtail(x\rightarrow_{k} z))$ \\
  & $\stackrel{(\L9)}{=}$ & $y\rightarrowtail(x \rightarrowtail(x\rightarrow_{k} z))$ \\
   & $=$ & $y\rightarrowtail(x \rightarrow_{k+1} z)$
\end{tabular}
\end{center}
\end{dem}

\

\begin{remark}\label{rem1}\rm
In every  \lukres$_n$-algebra $\A$ ($n\geq 2$)  we have that $(x\imn y)\vee x=1$, for any $x,y\in A$. That is, $(x\iml(x\imn y))\iml(x\imn y)=1$ and then $x\iml(x\imn y)\leq x\imn y$. On the other hand, by (\L 12) and (\L16), we have \, (i) \, $x\imn y = x\imn(x\iml y)$. From this, we can be proved \, (ii) \, $x\imn(x\imn y)=x\imn y$ \, and \, 
\begin{equation}\tag{\L17}x\imn(y\rightarrowtail z) = (x\imn y) \rightarrowtail (x\imn z)
\end{equation}
Indeed,  $1\stackrel{(\L2)}{=} (x\iml y)\iml((y\iml z)\iml (x\iml z))\stackrel{(\L9)}{=}(y\iml z)\iml((x\iml y)\iml (x\iml z))$ \break and so $(y\iml z)\leq (x\iml y)\iml (x\iml z)$. By (\L11), 
$$x\imn(y\iml z)\leq  x\imn((x\iml y)\iml (x\iml z)) \stackrel{(\L 16)}{=}(x\iml y)\iml(x\imn(x\iml z)) \stackrel{(i)}{=} (x\iml y)\iml(x\imn z).$$
Then 
\begin{center}
\begin{tabular}{ll}
& $(x\imn(y\iml z)\big)\iml ((x\iml y)\iml(x\imn z))=1$\\[1.5mm]
$\stackrel{(\L 9)}{\Longleftrightarrow}$ \hspace*{1cm}& $(x\iml y)\iml \big((x\imn(y\iml z))\iml(x\imn z)\big)=1$\\[1.5mm]
$\stackrel{}{\Longleftrightarrow}$ \hspace{2cm}& $(x\iml y)\leq \big(\big(x\imn(y\iml z)\big)\iml(x\imn z)\big)$\\[1.5mm]
$\stackrel{(\L 11)}{\Longleftrightarrow}$ \hspace{2cm}& $x\imn(x\iml y)\leq x\imn\big(\big(x\imn(y\iml z)\big)\iml(x\imn z)\big)$\\[1.5mm]
$\stackrel{(\L 12), (\L9)}{\Longleftrightarrow}$ \hspace{2cm}& $x\imn  y\leq x\iml(x\imn  y)\leq \big(x\imn(y\iml z)\big)\iml\big(x\imn(x\imn z)\big)$\\[1.5mm]
$\stackrel{(ii)}{\Longleftrightarrow}$ \hspace{2cm}& $x\imn  y\leq (x\imn(y\iml z))\iml\big(x\imn z\big)$\\[1.5mm]
$\stackrel{}{\Longleftrightarrow}$ \hspace{2cm}& $\big(x\imn  y\big)\iml \big((x\imn(y\iml z))\iml\big(x\imn z\big)\big)=1$\\[1.5mm]
$\stackrel{(\L 9)}{\Longleftrightarrow}$ \hspace{2cm}& $(x\imn(y\iml z))\iml\big((x\imn  y)\iml(x\imn z\big)\big)=1$\\[1.5mm]
$\stackrel{}{\Longleftrightarrow}$ \hspace{2cm}& $(x\imn(y\iml z))\leq(x\imn  y)\iml(x\imn z\big)$\\[1.5mm]
\end{tabular}
\end{center}
The other inequality is proved analogously.
\end{remark}

\

\noindent The following proposition shows a remarkable relation between $\imn$ and $\rightarrow_{k}$.

\begin{proposition}\label{prop22} Let $\A\in \mbox{\lukres}_n$, $n\geq 2$. Then, for every $x,y,z \in A$ it holds:
\begin{itemize}
\item[\rm (i)] $x\imn(y\rightarrow_{k} z) = (x\imn y) \rightarrow_{k} (x\imn z)$,
\item[\rm (ii)] $(x\imn y)\rightarrow_{k}x=x$ \, if \, $k\geq 1$.
\end{itemize}
 \end{proposition}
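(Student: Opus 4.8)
The plan is to prove both identities by induction on $k$, using the equation (\L17) from Remark \ref{rem1} as the main ingredient.

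\emph{Part (i).} For $k=0$ there is nothing to do: $y\rightarrow_{0}z=z$ and $(x\imn y)\rightarrow_{0}(x\imn z)=x\imn z$, so both sides equal $x\imn z$. For the inductive step, assuming the identity holds for $k$, I would compute
\[
x\imn(y\rightarrow_{k+1}z)=x\imn\bigl(y\iml(y\rightarrow_{k}z)\bigr)\stackrel{(\L17)}{=}(x\imn y)\iml\bigl(x\imn(y\rightarrow_{k}z)\bigr)\stackrel{(I.H.)}{=}(x\imn y)\iml\bigl((x\imn y)\rightarrow_{k}(x\imn z)\bigr)=(x\imn y)\rightarrow_{k+1}(x\imn z),
\]
where the first and last steps are just the definition of $\rightarrow_{k+1}$, the middle step is (\L17) applied with $y\rightarrow_{k}z$ in the role of ``$z$'', and the penultimate one is the induction hypothesis.

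\emph{Part (ii).} I would first settle the base case $k=1$, which is the crux. On the one hand, (\L12) with $x\imn y$ in the role of ``$x$'' and $x$ in the role of ``$y$'' gives $x\leq(x\imn y)\iml x$. On the other hand, by the definition of $\vee$ and axiom (\L6),
\[
\bigl((x\imn y)\iml x\bigr)\iml x=(x\imn y)\vee x=1,
\]
that is, $(x\imn y)\iml x\leq x$. Since $\leq$ is a partial order on $\A$, these two inequalities yield $(x\imn y)\rightarrow_{1}x=(x\imn y)\iml x=x$. For the inductive step ($k\geq 1$), assuming $(x\imn y)\rightarrow_{k}x=x$, I get
\[
(x\imn y)\rightarrow_{k+1}x=(x\imn y)\iml\bigl((x\imn y)\rightarrow_{k}x\bigr)=(x\imn y)\iml x=x,
\]
by the induction hypothesis followed by the already-proved case $k=1$.

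The only genuinely delicate point is the $k=1$ instance of part (ii): one has to notice that the defining axiom (\L6) of a \lukres$_n$-algebra, once unfolded through $u\vee v=(u\iml v)\iml v$, says precisely that $(x\imn y)\iml x\leq x$; everything else is a routine unwinding of the definitions of $\rightarrow_{k}$ together with (\L12), (\L17) and antisymmetry of $\leq$. I would also check that the order-theoretic facts used — that $\leq$ is a partial order and that $\iml$ is monotone as in (\L11) — are available for \lukres$_n$-algebras, which they are, being inherited from the \lukres-reduct.
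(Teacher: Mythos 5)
Your proof is correct and follows essentially the same route as the paper: induction on $k$, with (\L 17) driving part (i) and the unfolding of (\L 6) through $u\vee v=(u\iml v)\iml v$ settling the $k=1$ case of part (ii). The only (harmless) difference is in the inductive step of (ii), where you apply the induction hypothesis and then the base case directly, while the paper first commutes the arrows via (\L 16)/(\L 17); your version is, if anything, slightly more direct.
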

\begin{dem} (i): \, For $k=0$, the equality is immediate. (I.H.) Suppose that it holds for $k>0$. Then 
\begin{center}
\begin{tabular}{rcl}
$(x\imn y) \rightarrow_{k+1} (x\imn z)$& $=$ & $(x\imn y)\rightarrowtail ((x\imn y)\rightarrow_{k} (x\imn z))$ \\
 & $\stackrel{(I.H.)}{=}$ & $(x\imn y) \rightarrowtail (x\imn(y\rightarrow_{k} z))$ \\
  & $\stackrel{(\L17)}{=}$ & $x\imn(y\rightarrowtail (y\rightarrow_{k} z))$ \\
   & $=$ & $x\imn(y\rightarrow_{k+1} z)$
\end{tabular}
\end{center}

\

\noindent (ii): (B.S.) \,  If $k=1$, from (\L6), we know that $((x\imn y)\rightarrowtail x)\rightarrowtail x =1$ and therefore $((x\imn y)\rightarrowtail x)\leq x$. On the other hand, from (\L1) we have  $x\rightarrowtail((x\imn y)\rightarrowtail x)=1$ and then $x\leq(x\imn y)\rightarrowtail x$. \\
(I.H.) Suppose that $(x\imn y)\rightarrow_{k}x=x$ \, for \, $k\geq 1$. Then,
\begin{center}
\begin{tabular}{rcl}
$(x\imn y)\rightarrow_{k+1}x$& $=$ & $(x\imn y)\rightarrowtail((x\imn y)\rightarrow_{k}x)$ \\
 & $\stackrel{(\L17)}{=}$ & $(x\imn y)\rightarrow_{k}((x\imn y)\rightarrowtail x)$ \\
  & $\stackrel{B.S.}{=}$ & $(x\imn y)\rightarrow_{k}x$ \\
   & $\stackrel{(I.H.)}{=}$ & $x$
\end{tabular}
\end{center}
\end{dem}

\


\begin{proposition}(A.V. Figallo)\label{lemaux} Let $\A$ be an \lukres$_n$-algebra, $n\geq 2$. Then, for any $x,y,z\in A$ it holds:
\begin{itemize}
\item[\rm (\L18) ] $1\rightarrow_k x=x$, $k\geq 0$;
\item[\rm (\L19) ] $x\rightarrow_k 1=1$, $k\geq 0$;
\item[\rm (\L20) ] $x\rightarrow_k x=1$, $k\geq 1$;
\item[\rm (\L21) ] $x\imn (y\imn z)=(x\imn y)\imn (x\imn z)$; 
\item[\rm (\L22)] $x\imn (y\imn  x)=1$; 
\item[\rm (\L23)] $((x\imn y)\imn x)\imn x=1$.
\end{itemize}
\end{proposition}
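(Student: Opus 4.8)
The plan is to read off all six identities from the material already developed, above all Proposition~\ref{prop22}. Only (\L 18), (\L 19) and (\L 20) call for a short induction on $k$; once these are in hand, (\L 21)--(\L 23) drop out as instances of Proposition~\ref{prop22} after simplifying the term $x\imn x$.

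First I would prove (\L 18) and (\L 19) by induction on $k$. For (\L 18) the base case $k=0$ is the definition of $\rightarrow_0$, and if $1\rightarrow_k x=x$ then $1\rightarrow_{k+1}x=1\iml(1\rightarrow_k x)=1\iml x=x$ by (\L 5); dually, for (\L 19) one has $x\rightarrow_0 1=1$ and $x\rightarrow_{k+1}1=x\iml(x\rightarrow_k 1)=x\iml 1=1$ by (\L 7). For (\L 20) the base case is $x\rightarrow_1 x=x\iml(x\rightarrow_0 x)=x\iml x=1$ by (\L 10), and for $k\geq 1$, $x\rightarrow_{k+1}x=x\iml(x\rightarrow_k x)=x\iml 1=1$ using the inductive hypothesis together with (\L 7). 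Since $n\geq 2$ gives $n-1\geq 1$, it follows in particular that $x\imn x=1$, a fact I will invoke repeatedly below.

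Next, (\L 21) is precisely Proposition~\ref{prop22}(i) taken at $k=n-1$, since $\imn$ abbreviates $\rightarrow_{(n-1)}$. For (\L 22) I would set $z:=x$ and $k:=n-1$ in Proposition~\ref{prop22}(i), obtaining $x\imn(y\imn x)=(x\imn y)\imn(x\imn x)$; as $x\imn x=1$ by (\L 20), the right-hand side equals $(x\imn y)\imn 1$, which is $1$ by (\L 19). For (\L 23) I would apply Proposition~\ref{prop22}(ii) with $k:=n-1\geq 1$, which yields $(x\imn y)\imn x=x$; hence $\big((x\imn y)\imn x\big)\imn x=x\imn x=1$ by (\L 20).

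There is no real obstacle here: each clause collapses to an identity already established. The only points to watch are the bookkeeping facts that $\imn=\rightarrow_{(n-1)}$ and $n-1\geq 1$ --- so that (\L 20) applies to $x\imn x$ and Proposition~\ref{prop22}(ii) is available at $k=n-1$ --- and the choice of base case in each of the three inductions, in particular that (\L 20) starts at $k=1$ rather than $k=0$.
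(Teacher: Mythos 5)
Your proof is correct and follows essentially the same route as the paper: the paper likewise dispatches (\L18)--(\L20) by induction on $k$, obtains (\L21) as Proposition~\ref{prop22}(i) at $k=n-1$, derives (\L22) via (\L21), (\L20) and (\L19), and gets (\L23) from Proposition~\ref{prop22}(ii) together with (\L20). The only difference is that you spell out the three easy inductions that the paper leaves to the reader.
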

\begin{dem} {\rm (\L18)}, {\rm (\L19)} and {\rm (\L20)} are easily obtained using induction on $k$. \\[2mm]
{\rm (\L21)} From Proposition \ref{prop22}, taking $n-1=k$. \\[2mm]
{\rm (\L22)} \,  $x\imn(y\imn  x) \, \stackrel{(\L21)}{=} \, (x\imn y)\imn(x\imn x) \, \stackrel{(\L20)}{=} \break\, (x\imn y)\imn 1 \, \stackrel{(\L19)}{=} \, 1$. \\[2mm]
{\rm (\L23)} From Proposition \ref{prop22}(ii) and (\L20) we have \, $((x\imn y)\imn x)\imn x  = \break x\imn x =1.$
\end{dem}

\

\begin{remarks}\begin{enumerate}[\rm (i)]
\item[]
\item  \lukres$_{2}$ is precisely the well-known class of {\em Tarski} algebras.

\item {\bf \L}$_{n}$ of Facts \ref{exam1} (iii) is a \lukres$_n$-algebra, for $n\geq 2$.

\item  Let $n\geq 2$ and let $k$ a natural number such that $2\leq k \leq n$, then {\bf \L}$_{k}$ is isomorphic to a \lukres$_{n}$--subalgebra of {\bf \L}$_{n}$. Indeed, let $S$ be  the upper set $\left\uparrow\frac{n-k}{n-1}\right.$, then it is clear that $S$ is a \lukres$_{n}$--subalgebra of {\bf \L}$_{n}$. Besides, $S$ is a chain with $k$ elements and,  by \cite[pg. 124]{RS}, we know that {\bf \L}$_{k}$ is \lukres$_n$-isomorphic to $S$. This marks an essential difference between the classes \lukres$_{n}$ and ${\bf W}_{n}$ since it is well-known that \,  {\bf \L}$^{\bf W}_{k}$ is a ${\bf W}_{n}$-subalgebra of \, {\bf \L}$^{\bf W}_{n}$ \, iff \, $(k-1)\mid (n-1)$ ($(k-1)$ divides $(n-1)$).
\end{enumerate}
\end{remarks}

\

\section{ $\Delta$-\L ukasiewicz residuation algebras of order $n$}\label{s3}

\noindent In 1990, A. V. Figallo introduced the $3$-valued Super-\L ukasiewicz logic expanded with $\Delta$  in \cite{AVF} motivated by  Moisil operators (see also \cite{AVF2}) and the fact that, in this logic, it is not possible to recover $\Delta$ from the implication and bottom (see \cite[Example 1]{AVF1}).

In the area of {\em fuzzy logic}, the operator $\Delta$ was studied and called  $\Delta$ operator  by Baaz, \cite{Baaz}. This author studied the  propositional and the quantified version of G\"odel logic expanded by $\Delta$. Later on, Hájek studied the extensions of  Basic Fuzzy Logic (BL), \L ukasiewicz logic, Product logic and other fuzzy logics by the $\Delta$ operator, \cite{PH}. In this setting, Esteva and Godo introduced the logic MTL and its extension MTL$_\Delta$ by $\Delta$ operator, \cite{EG-01}, see also \cite{EGM2001,EGHN2000}. Furthermore, Hájek and Cintula  called all these systems  $\Delta$-fuzzy logics and presented their quantified version  (without identity) with the respective soundness and completeness theorems in \cite{HC}.  Their completeness proof for these first-order logics is  obtained by adding the axiom of {\em constant domains} and using a similar Henkin's strategy.

Next, we consider an equational class of algebras which constitute a natural generalization of the $I\Delta_3$-algebras studied in \cite{AVF}.

\begin{definition}
A $\Delta$-\L ukasiewicz residuation algebra of order $n$, $n\geq 2$,  is an algebra $(A,\rightarrowtail,\Delta,1 )$ of type $(2, 1, 0)$ such that $(A, \rightarrowtail, 1)$ is an \lukres$_n$-algebra and the following identities are satisfied: 
\begin{equation}\tag{\rm $\Delta$\L1}
\Delta x\rightarrowtail y \approx x \imn y
\end{equation}
\begin{equation}\tag{\rm $\Delta$\L2} \Delta(\Delta x\rightarrowtail y)\approx \Delta x\rightarrowtail\Delta y
\end{equation}
\end{definition}

\

\noindent We denote the variety of $\Delta$-\L ukasiewicz residuation algebras of order $n$ by   \lukres$^{\Delta}_n$. If $\A \in \mbox{\lukres}^{\Delta}_n$ and $X\subseteq A$, then $\Delta(X)=\{\Delta x: x\in X\}$. We shall note $\Delta(\A$) instead of $\Delta(A)$. Besides, recall that $x\in A$ is said to be a {\em Tarskian element} if for every $y\in A$ it holds \, 
$$ x\rightarrowtail y = x\rightarrowtail (x\rightarrowtail y).$$ 
We denote with $T(\A)$ the set of all Tarskian elements of $\A$.

\begin{lemma}
Let  $\A$ be an \lukres$^{\Delta}_n$-algebra. For every $x,y, z\in A$ the following conditions are satisfied:
 \begin{multicols}{2}
\begin{itemize}
    \item[\rm ($\Delta$\L3)] $\Delta x\leq x$
    \item[\rm ($\Delta$\L4)] $x\imn\Delta x =1$ 
    \item[\rm ($\Delta$\L5)] $\Delta 1=1$
    \item[\rm ($\Delta$\L6)] $\Delta\Delta x = \Delta x$
    \item[\rm ($\Delta$\L7)] $\Delta(\Delta x\rightarrowtail \Delta y)=\Delta x\rightarrowtail \Delta y$
    \item[\rm ($\Delta$\L8)] $\Delta x\rightarrowtail y=\Delta x\rightarrowtail (\Delta x\rightarrowtail y)$
    \item[\rm ($\Delta$\L9)] $T(\A)=\Delta(\A)$
    \item[\rm ($\Delta$\L10)] $\Delta x\rightarrowtail (y\rightarrowtail  z)=(\Delta x\rightarrowtail y)\rightarrowtail(\Delta x\rightarrowtail z)$
    \item[\rm ($\Delta$\L11)] If $x\leq y$, then $\Delta x\leq \Delta y$
    \item[\rm ($\Delta$\L12) ] $\Delta(x\rightarrow_n y)=\Delta x\rightarrowtail \Delta y$    
    \item[\rm ($\Delta$\L13)] $\Delta x\rightarrowtail\Delta(x\rightarrowtail\Delta x)=1$
    \item[\rm ($\Delta$\L14)] $\Delta x\rightarrowtail\Delta(x\rightarrowtail y)=\Delta x\rightarrowtail\Delta y$
    \item[\rm ($\Delta$\L15)] $\Delta(x\rightarrowtail y)\leq \Delta x\rightarrowtail\Delta y$
\end{itemize}
\end{multicols}
\end{lemma}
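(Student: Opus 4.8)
The plan is to establish the fifteen items essentially in the listed order, since each one tends to feed the next, and to lean heavily on the two defining identities ($\Delta$\L1) and ($\Delta$\L2) together with the arithmetic of $\imn$ developed in Remark \ref{rem1} and Propositions \ref{prop22}, \ref{lemaux}. First I would derive ($\Delta$\L3): by ($\Delta$\L1), $\Delta x\rightarrowtail\Delta x = x\imn\Delta x$, and since $\Delta x\rightarrowtail\Delta x=1$ by (\L10) we only need the companion fact; more directly, $\Delta x\rightarrowtail x = x\imn x \stackrel{(\L20)}{=} 1$, hence $\Delta x\leq x$. Then ($\Delta$\L4) is $x\imn\Delta x \stackrel{(\Delta\L1)}{=}\Delta x\rightarrowtail\Delta x\stackrel{(\L10)}{=}1$. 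For ($\Delta$\L5): $\Delta 1 \geq \Delta 1\rightarrowtail\Delta 1$ is not quite it; instead note $1 = x\imn\Delta x$ with $x=1$ gives $1 = 1\imn\Delta 1 \stackrel{(\L18)}{=}\Delta 1$. For ($\Delta$\L6): from ($\Delta$\L2) with $y=x$ we get $\Delta(\Delta x\rightarrowtail x)=\Delta x\rightarrowtail\Delta x=1$; but $\Delta x\rightarrowtail x = x\imn x = 1$ by ($\Delta$\L1) and (\L20), so $\Delta 1 = \Delta x\rightarrowtail\Delta x$, i.e. $1\rightarrowtail$-manipulation; a cleaner route is to apply ($\Delta$\L2) with the roles chosen so that the left side collapses to $\Delta\Delta x$ and use ($\Delta$\L5). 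I would pin this down carefully since ($\Delta$\L6) is used repeatedly afterwards.

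Next, ($\Delta$\L7) is immediate from ($\Delta$\L2) and ($\Delta$\L6): $\Delta(\Delta x\rightarrowtail\Delta y)\stackrel{(\Delta\L2)}{=}\Delta x\rightarrowtail\Delta\Delta y\stackrel{(\Delta\L6)}{=}\Delta x\rightarrowtail\Delta y$ — wait, one must check the first step uses $\Delta y$ in place of $y$, which is legitimate. Then ($\Delta$\L8) is just ($\Delta$\L1) combined with $x\imn y = x\imn(x\imn y)$, which is Remark \ref{rem1}(ii) (the identity labelled there), rewritten via ($\Delta$\L1) on both sides. For ($\Delta$\L9), the inclusion $\Delta(\A)\subseteq T(\A)$ follows directly from ($\Delta$\L8); for the reverse, if $x$ is Tarskian then I would show $x=\Delta x$: a Tarskian $x$ satisfies $x\rightarrowtail y = x\imn y$ for all $y$ — actually one needs $x\rightarrowtail y = x\rightarrow_{(n-1)}y$, which follows by iterating $x\rightarrowtail y = x\rightarrowtail(x\rightarrowtail y)$ — so $x\rightarrowtail y = \Delta x\rightarrowtail y$ for all $y$ by ($\Delta$\L1), and taking $y=x$, resp. $y=\Delta x$, forces $x\leq\Delta x$ against ($\Delta$\L3). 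This equivalence is the one genuinely non-mechanical point. Then ($\Delta$\L10) follows from ($\Delta$\L1) and (\L17) (the analogue $x\imn(y\rightarrowtail z)=(x\imn y)\rightarrowtail(x\imn z)$ from Remark \ref{rem1}), and ($\Delta$\L11) follows because $x\leq y$ gives $x\rightarrowtail y=1$, hence $x\imn y=1$ by (\L11)/(\L16)-type reasoning, hence $\Delta x\rightarrowtail\Delta y = \Delta(\Delta x\rightarrowtail y)$-style manipulation using ($\Delta$\L2) — more simply, $\Delta x\leq x\leq y$ and $\Delta y$ is the largest Tarskian below $y$ in an appropriate sense; I would instead argue $x\imn y=1 \Rightarrow \Delta x\rightarrowtail y = 1 \Rightarrow \Delta x\leq y$, then $\Delta x$ Tarskian $\leq y$ implies $\Delta x\leq\Delta y$ once monotonicity of $\Delta$ on Tarskian elements is known, which itself needs $\Delta y$ to be the maximum Tarskian element $\leq y$ — so I would first prove that auxiliary maximality statement.

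For the remaining four: ($\Delta$\L12) is $\Delta(x\rightarrow_n y)$; using ($\Delta$\L1) backwards, $\Delta x\rightarrowtail\Delta y = \Delta(\Delta x\rightarrowtail y)$ by ($\Delta$\L2), and $\Delta x\rightarrowtail y = x\imn y$, so it reduces to showing $\Delta(x\imn y)=\Delta(x\rightarrow_n y)$, which should come from $x\rightarrow_n y$ and $x\imn y = x\rightarrow_{(n-1)}y$ having the same "$\Delta$", i.e. the same associated Tarskian element — provable via Proposition \ref{prop22}(i) and the fact that $\A$ is also a $\lukres_{n+1}$-algebra. ($\Delta$\L13): apply $\Delta$-monotonicity ($\Delta$\L11) to $\Delta x\leq x\rightarrowtail\Delta x$ (which holds by (\L12) since $\Delta x\leq x$... actually by (\L1)/(\L12), $\Delta x\leq x\rightarrowtail\Delta x$), then $\Delta\Delta x\leq\Delta(x\rightarrowtail\Delta x)$, and $\Delta\Delta x=\Delta x$ by ($\Delta$\L6), giving $\Delta x\leq\Delta(x\rightarrowtail\Delta x)$, i.e. the claimed $=1$. ($\Delta$\L14): $\leq$ comes from ($\Delta$\L11) applied to $x\rightarrowtail y \geq y$... no — from $\Delta x\rightarrowtail\Delta(x\rightarrowtail y)$: use ($\Delta$\L10) with the observation $\Delta x\rightarrowtail(x\rightarrowtail y) = \Delta x\rightarrowtail y$ (by ($\Delta$\L1) and Remark \ref{rem1}(i), $x\imn(x\rightarrowtail y)=x\imn y$) and apply $\Delta$ inside; the reverse inequality uses ($\Delta$\L15). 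Finally ($\Delta$\L15): from $y\leq x\rightarrowtail y$ is false in general, so instead note $\Delta(x\rightarrowtail y)$ is Tarskian and $\leq x\rightarrowtail y$; one shows $\Delta(x\rightarrowtail y)\rightarrowtail(\Delta x\rightarrowtail\Delta y)=1$ by currying via ($\Delta$\L10) and using that $\Delta(x\rightarrowtail y)$, $\Delta x$ Tarskian, reducing to $\Delta(x\rightarrowtail y)\wedge\Delta x \leq \Delta y$, which follows from $(x\rightarrowtail y)\wedge x\leq y$ together with ($\Delta$\L11). The main obstacle I anticipate is the Tarskian-element characterization underlying ($\Delta$\L9) and the maximality property of $\Delta y$ among Tarskian elements $\leq y$ needed for ($\Delta$\L11) and ($\Delta$\L12)–($\Delta$\L15); once that structural fact is secured, the rest is bookkeeping with ($\Delta$\L1), ($\Delta$\L2) and the \lukres$_n$-identities.
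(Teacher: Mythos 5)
Most of your plan is sound and, for items ($\Delta$\L3)--($\Delta$\L13), it essentially reproduces the paper's argument (your ($\Delta$\L6) still needs to be pinned down, but the instance you are circling around works: apply ($\Delta$\L2) with $y:=\Delta x$, so that $\Delta x\rightarrowtail\Delta\Delta x=\Delta(\Delta x\rightarrowtail\Delta x)=\Delta 1=1$, and combine with ($\Delta$\L3) for $\Delta x$; likewise your ($\Delta$\L12) reduction is fine because $x\rightarrow_n y=x\iml(x\imn y)=x\imn y$ already follows from Remark \ref{rem1} and (\L12)). The genuine gap is your proof of ($\Delta$\L15). You reduce it to ``$\Delta(x\rightarrowtail y)\wedge\Delta x\leq\Delta y$, which follows from $(x\rightarrowtail y)\wedge x\leq y$ together with ($\Delta$\L11)'', but there is no meet in the signature, and the underlying poset of an \lukres$_n$-algebra need not have binary infima at all: in the paper's five-element \lukres$_3$-example the minimal elements $a$ and $b$ have no common lower bound. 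Even granting meets, the ``currying'' step (that $\Delta(x\rightarrowtail y)\rightarrowtail(\Delta x\rightarrowtail\Delta y)=1$ is equivalent to the meet inequality) is a residuation property of $\wedge$ that $\rightarrowtail$ does not satisfy in \L ukasiewicz structures, and the final step would additionally require $\Delta u\wedge\Delta v\leq\Delta(u\wedge v)$, which ($\Delta$\L11) alone does not give. So as written, ($\Delta$\L15) is not proved; and since you route ``the reverse inequality'' of ($\Delta$\L14) through ($\Delta$\L15), that item inherits the gap.

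The repair is already contained in your own sketch of ($\Delta$\L14): the observation $\Delta x\rightarrowtail(x\rightarrowtail y)\;\stackrel{(\Delta\L1),\ \rm Rem.\ref{rem1}(i)}{=}\;\Delta x\rightarrowtail y$, followed by ``applying $\Delta$ inside'' via ($\Delta$\L2) on both ends, gives the full equality
$\Delta x\rightarrowtail\Delta(x\rightarrowtail y)=\Delta(\Delta x\rightarrowtail(x\rightarrowtail y))=\Delta(\Delta x\rightarrowtail y)=\Delta x\rightarrowtail\Delta y$,
with no appeal to ($\Delta$\L15) — this is exactly the paper's proof of ($\Delta$\L14). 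Once ($\Delta$\L14) is available, ($\Delta$\L15) follows by a short computation entirely inside the $\{\rightarrowtail,1\}$-reduct: by (\L9) and ($\Delta$\L10),
$\Delta(x\rightarrowtail y)\rightarrowtail(\Delta x\rightarrowtail\Delta y)=\Delta x\rightarrowtail(\Delta(x\rightarrowtail y)\rightarrowtail\Delta y)=(\Delta x\rightarrowtail\Delta(x\rightarrowtail y))\rightarrowtail(\Delta x\rightarrowtail\Delta y)$,
and the last term equals $(\Delta x\rightarrowtail\Delta y)\rightarrowtail(\Delta x\rightarrowtail\Delta y)=1$ by ($\Delta$\L14) and (\L10). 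Replace your meet-based argument by this derivation (and drop the dependence of ($\Delta$\L14) on ($\Delta$\L15)), and the whole plan goes through.
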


\begin{proof} We just prove  ($\Delta$\L6),  ($\Delta$\L8), ($\Delta$\L9), ($\Delta$\L14) and ($\Delta$\L15), the remaining are analogous.\\
{\rm ($\Delta$\L6)}: From ($\Delta$\L3), we have $\Delta\Delta x \leq \Delta x$. On the other hand, \, $\Delta x \rightarrowtail \Delta\Delta x \, \stackrel{\rm (\Delta\L2)}{=} \break \Delta(\Delta x \rightarrowtail \Delta x) \, \stackrel{\rm (\L10)}{=} \, \Delta 1 \, \stackrel{\rm (\Delta\L5)}{=} \, 1$; and therefore, $\Delta x\leq\Delta\Delta x$. \\
{\rm ($\Delta$\L8)}: \,  $\Delta x\rightarrowtail (\Delta x\rightarrowtail y) \, \stackrel{\rm (\Delta\L1)}{=} \, x\imn(x\imn y) \, = \, x\imn y  \, \stackrel{\rm (\Delta\L1)}{=} \, \Delta x \rightarrowtail y$, by Proposition \ref{prop22}(ii).\\
{\rm ($\Delta$\L9)}: \, From  ($\Delta$\L8), we have that $\Delta(\A) \subseteq T(\A)$. Conversely, let $x\in T(\A)$, then \break $x\rightarrowtail \Delta x = x\rightarrowtail (x\rightarrowtail \Delta x)$. Then, by the definition of $\imn$, we have $ x\rightarrowtail \Delta x = x\imn \Delta x$. Then, $x \rightarrowtail \Delta x = x\imn \Delta x  \, \stackrel{\rm (\Delta\L4)}{=} \, 1$, therefore $x\leq \Delta x$ and by 
 ($\Delta$\L3),  we have that $\Delta x=x$.\\
 {\rm ($\Delta$\L14)}: 
 \begin{center}
\begin{tabular}{rcl}
$ \Delta x\rightarrowtail \Delta(x\rightarrowtail y)$&  $\stackrel{(\Delta\L2)}{=}$ & $\Delta(\Delta x\rightarrowtail (x\rightarrowtail y))$ \\
 & $=$ & $\Delta((\Delta x\rightarrowtail x)\rightarrowtail (\Delta x\rightarrowtail y))$ \\
   & $\stackrel{(\Delta\L3)}{=}$ & $\Delta(1\rightarrowtail (\Delta x\rightarrowtail y))$ \\
   & $=$ & $\Delta(\Delta x\rightarrowtail y)$ \\
     & $\stackrel{(\Delta\L2)}{=}$ & $\Delta x\rightarrowtail \Delta y$ \\
\end{tabular}
\end{center}
 {\rm ($\Delta$\L15)}: 
  \begin{center}
\begin{tabular}{rcl}
$ \Delta (x\rightarrowtail y)\rightarrowtail( \Delta x\rightarrowtail \Delta y)$&  $\stackrel{(\L9)}{=}$ & $\Delta x\rightarrowtail( \Delta (x\rightarrowtail y)\rightarrowtail \Delta y)$ \\
 & $=$ & $(\Delta x\rightarrowtail(\Delta x\rightarrowtail y))\rightarrowtail (\Delta x\rightarrowtail\Delta y)$ \\
   & $\stackrel{(\Delta\L14)}{=}$ & $(\Delta x\rightarrowtail\Delta y)\rightarrowtail (\Delta x\rightarrowtail\Delta y)$\\
   & $=$ & $1$ 
  \end{tabular}
\end{center}
\end{proof}

\

\begin{example}\label{exampleLn} For each $n\geq 2$, consider the operator $\Delta$ defined over  {\bf \L}$_{n}=\langle \L_n, \iml, 1\rangle\in \lukres_n$ by
 $$\Delta x = \left\{ \begin{tabular}{ll} $0$ & $x\not=1$ \\ $1$ & $x=1$ \end{tabular} \right.$$ 
for every $x\in \L_n$. Then, {\bf \L}$^{\Delta}_{n}=\langle \L_n, \iml, \Delta, 1\rangle$ is an \lukres$^{\Delta}_n$-algebra.
\end{example}

\

Let $\langle A, \rightarrowtail, 1\rangle$ be an $n$-valued \L ukasiewicz residuation algebra and assume that $\Delta_1$ and $\Delta_2$ are two operators defined on $A$ such that both satisfy {\rm $\Delta$\L1} and {\rm $\Delta$\L2}. Then, if $x\in A$ we know that \, 
$$\Delta_1 x \iml\Delta_2 x \stackrel{\rm (\Delta\L1)}{=} x \imn \Delta_2 x \stackrel{\rm (\Delta\L1)}{=} \Delta_2 x  \iml \Delta_2x = 1 $$
Hence, $\Delta_1 x\leq\Delta_2 x$. Analogously we show that  $\Delta_2 x\leq\Delta_1 x$. So, we have shown the following:

\begin{lemma} Let  $A \in \mbox{\lukres}_n$. Then, $A$ admits at most one  structure of \lukres$^{\Delta}_n$-algebra.
\end{lemma}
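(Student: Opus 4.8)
The argument is essentially the computation displayed just before the statement; the plan is simply to organize it into a formal proof. Suppose $(A,\rightarrowtail,\Delta_1,1)$ and $(A,\rightarrowtail,\Delta_2,1)$ are both \lukres$^{\Delta}_n$-algebras sharing the same \lukres$_n$-reduct $\langle A,\rightarrowtail,1\rangle$; I want to show $\Delta_1 x=\Delta_2 x$ for every $x\in A$.

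First I would fix $x\in A$ and evaluate $\Delta_1 x\rightarrowtail\Delta_2 x$ in two ways using ($\Delta$\L1). Reading ($\Delta$\L1) for $\Delta_1$ gives $\Delta_1 x\rightarrowtail\Delta_2 x = x\imn\Delta_2 x$; reading ($\Delta$\L1) for $\Delta_2$ with $\Delta_2 x$ in place of $y$ gives $x\imn\Delta_2 x=\Delta_2 x\rightarrowtail\Delta_2 x$, and the latter equals $1$ by (\L10). Hence $\Delta_1 x\rightarrowtail\Delta_2 x=1$, i.e. $\Delta_1 x\le\Delta_2 x$. Interchanging the roles of $\Delta_1$ and $\Delta_2$ in the same computation yields $\Delta_2 x\le\Delta_1 x$.

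To conclude $\Delta_1 x=\Delta_2 x$ from these two inequalities, I would invoke antisymmetry of the order $\le$ on an \lukres-algebra: if $u\rightarrowtail v=1$ and $v\rightarrowtail u=1$, then by (\L3) we have $1\rightarrowtail v=(u\rightarrowtail v)\rightarrowtail v=(v\rightarrowtail u)\rightarrowtail u=1\rightarrowtail u$, and (\L5) then gives $v=u$. Applying this with $u=\Delta_1 x$, $v=\Delta_2 x$ finishes the argument, and since $x\in A$ was arbitrary we obtain $\Delta_1=\Delta_2$, so the $\Delta$-structure on $A$, if it exists, is unique.

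There is no real obstacle here: every identity used ($\Delta$\L1, \L3, \L5, \L10) is available from the preceding material, and the only point that deserves a word of justification — that $\le$ is genuinely a partial order rather than merely a preorder — is the standard antisymmetry argument sketched above. One may note in passing that the proof in fact establishes slightly more: an \lukres$_n$-algebra carries at most one unary operation $\Delta$ satisfying ($\Delta$\L1) alone, since ($\Delta$\L2) is never used.
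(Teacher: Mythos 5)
Your proof is correct and follows essentially the same route as the paper: the paper also computes $\Delta_1 x \rightarrowtail \Delta_2 x \stackrel{(\Delta\mathrm{\L}1)}{=} x \rightarrow_{(n-1)} \Delta_2 x \stackrel{(\Delta\mathrm{\L}1)}{=} \Delta_2 x \rightarrowtail \Delta_2 x = 1$ and concludes by symmetry and antisymmetry of the order. Your added remarks (the explicit antisymmetry check via (\L3)/(\L5) and the observation that only ($\Delta$\L1) is needed) are accurate refinements of the same argument.
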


\

\noindent However, we shall see that not every $n$-valued \L ukasiewicz residuation algebra admits a structure of \lukres$^{\Delta}_n$-algebra. Indeed, let $\A$ be an \lukres$^{\Delta}_n$-algebra. For every $x\in A$ we define the set $T_x$ as the set of all Tarskian elements of $\A$ which are less than or equal to $x$, i.e.,
$$T_x=\{ t\in T(\A) : t\leq x\}$$
Then,
\begin{proposition}\label{prop29} If $\A$ is an \lukres$^{\Delta}_n$-algebra and $x\in A$. Then, \, $T_x$ has greatest element and it coincides with $\Delta x$.
\end{proposition}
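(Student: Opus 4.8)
The plan is to show that $\Delta x$ itself belongs to $T_x$ and that it dominates every member of $T_x$; these two facts together say exactly that $\Delta x$ is the greatest element of $T_x$.

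First I would verify membership. By ($\Delta$\L9) we have $\Delta(\A)=T(\A)$, so $\Delta x$ is a Tarskian element of $\A$; and by ($\Delta$\L3), $\Delta x\leq x$. Hence $\Delta x\in T_x$.

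Next I would show $\Delta x$ is an upper bound of $T_x$. Take an arbitrary $t\in T_x$, so $t\in T(\A)$ and $t\leq x$. Since $T(\A)=\Delta(\A)$ by ($\Delta$\L9), there is some $a\in A$ with $t=\Delta a$; then ($\Delta$\L6) yields $\Delta t=\Delta\Delta a=\Delta a=t$, i.e.\ every Tarskian element is a fixed point of $\Delta$. Applying ($\Delta$\L11) to $t\leq x$ gives $t=\Delta t\leq\Delta x$. Thus $t\leq\Delta x$ for all $t\in T_x$, and combined with the previous paragraph $\Delta x=\max T_x$.

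The only point requiring a moment's thought — and it is a mild one — is the identification of the Tarskian elements with the fixed points of $\Delta$, which is where ($\Delta$\L9) and the idempotency law ($\Delta$\L6) are used; after that, monotonicity ($\Delta$\L11) closes the argument. No heavier machinery (e.g.\ from Proposition~\ref{prop22}) is needed, since all the required facts are already packaged in the preceding lemma.
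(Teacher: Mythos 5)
Your proof is correct and follows essentially the same route as the paper: establish $\Delta x\in T_x$ via ($\Delta$\L3) together with the Tarskian-ness of $\Delta x$ (the paper cites ($\Delta$\L8) directly, you cite ($\Delta$\L9), which amounts to the same thing), and then use ($\Delta$\L9), ($\Delta$\L6) and monotonicity ($\Delta$\L11) to show every $t\in T_x$ satisfies $t=\Delta t\leq\Delta x$. No gaps.
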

\begin{dem} Let $y\in T_x$. From ($\Delta$\L8) and ($\Delta$\L3) we have that $\Delta x\in T_x$. Besides, if $t\in T_x\subseteq T(\A)$ then, by ($\Delta$\L9), $t\in \Delta(\A)$. That is, $t=\Delta y$, for some $y\in A$. Then, since $t\leq x$, by ($\Delta$\L6) and ($\Delta$\L11), $\Delta t=\Delta\Delta y=\Delta y=t\leq\Delta x$. 
\end{dem}

\

\noindent Now, consider the \lukres$_3$-algebra $\A=\langle A, \iml, 1\rangle$  where $A=\{a,b,c,d,1 \}$ and whose underlying ordered structure is given by the following Hasse diagram

\

\hspace{1cm}
\begin{tikzpicture}[scale=.7]
\tikzstyle{every node}=[draw,circle,fill=white,inner sep=2pt]
  \node (one) at (0,3) [label=above:$1$] {};
  \node (c) at (-1.7,1) [label=left:$c$] {};
   \node (d) at (1.7,1) [label=right:$d$] {};
  \node (b) at (0,-1) [label=below:$ b$] {};
 \node (a) at (-3.4,-1) [label=below:$ a$] {};
  \draw (a) -- (c) -- (one) -- (d) -- (b) -- (c);
\hspace{5cm}
\begin{tabular}{c|ccccc}
$\iml$ & $a$ & $b$ & $c$ & $d$ & $1$ \\ \hline
$a$ & $1$ & $d$ & $1$ & $d$ & $1$ \\ 
$b$ & $c$ & $1$ & $1$ & $1$ & $1$ \\ 
$c$ & $c$ & $d$ & $1$ & $d$ & $1$ \\ 
$d$ & $a$ & $c$ & $c$ & $1$ & $1$ \\ 
$1$ & $a$ & $b$ & $c$ & $d$ & $1$ 
\end{tabular}
\end{tikzpicture}  

\

\

\noindent Then, $T(\A) =\{a, d, 1\}$ and $T_{b}=\emptyset$ and therefore $\A$ does not admit a structure of \break \lukres$^{\Delta}_3$-algebra. This shows that it is not always possible to define the operator $\Delta$ in a given $n$-valued \L ukasiewicz residuation algebra; and therefore, our study is well founded.

\

\begin{lemma}\label{lem210} If $\A\in \mbox{\lukres}^{\Delta}_n$ and $t\in T(\A)$, then $x\imn t \in T(\A)$ for every $x\in A$.
\end{lemma}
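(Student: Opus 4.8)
The plan is to exploit the equivalence $T(\A)=\Delta(\A)$ from ($\Delta$\L9) together with ($\Delta$\L1) in order to rewrite $x\imn t$ as $\Delta x\rightarrowtail\Delta t$, and then to read off from ($\Delta$\L7) that this element is a value of $\Delta$, hence Tarskian again.

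First I would note that, since $t\in T(\A)=\Delta(\A)$ by ($\Delta$\L9), there is some $s\in A$ with $t=\Delta s$; then $\Delta t=\Delta\Delta s\stackrel{(\Delta\L6)}{=}\Delta s=t$, so $t$ is a fixed point of $\Delta$. Next, by the definition of $\imn$ and ($\Delta$\L1) we have $x\imn t=\Delta x\rightarrowtail t$, and using $t=\Delta t$ this equals $\Delta x\rightarrowtail\Delta t$. Now ($\Delta$\L7) yields $\Delta(\Delta x\rightarrowtail\Delta t)=\Delta x\rightarrowtail\Delta t$, so $x\imn t\in\Delta(\A)$; applying ($\Delta$\L9) once more, $\Delta(\A)=T(\A)$, whence $x\imn t\in T(\A)$, as required.

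I do not expect a genuine obstacle here, since the argument is just a short chain of already established identities; the only point requiring a little care is the very first step, namely that membership of $t$ in $T(\A)$ forces $\Delta t=t$ — which is precisely what ($\Delta$\L9) together with the idempotency ($\Delta$\L6) provides. An equivalent route, should one prefer not to invoke ($\Delta$\L7), is to put $w:=x\imn t=\Delta x\rightarrowtail t$, check $w=\Delta w$ as above, and then verify the Tarskian condition $w\rightarrowtail y=w\rightarrowtail(w\rightarrowtail y)$ directly from ($\Delta$\L8) applied to $\Delta w$; this is essentially the same computation repackaged.
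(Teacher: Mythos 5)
Your argument is correct as a proof of the lemma as stated, but it follows a genuinely different route from the paper. You work entirely through the $\Delta$-machinery: $t\in T(\A)=\Delta(\A)$ by ($\Delta$\L9), hence $\Delta t=t$ by ($\Delta$\L6); then ($\Delta$\L1) turns $x\imn t$ into $\Delta x\iml\Delta t$, and ($\Delta$\L7) (or, in your variant, ($\Delta$\L8) applied to the fixed point $w=\Delta w$) shows this is again in $\Delta(\A)=T(\A)$. All of these identities are established before the lemma, so there is no circularity, and your proof is considerably shorter than the paper's. The paper, by contrast, never touches $\Delta$: it verifies the Tarskian condition $(x\imn t)\iml\big((x\imn t)\iml z\big)=(x\imn t)\iml z$ by a direct equational computation in the implication reduct, using only (\L1)--(\L19) together with the Tarskianness of $t$ (which makes $t\iml$ coincide with $t\imn$, so that (\L16)/(\L17) can be applied). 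What the longer computation buys is a strictly stronger statement: it holds in any $\lukres_n$-algebra, with no $\Delta$ assumed. This matters later, in the theorem characterizing which $\lukres_n$-algebras admit a $\lukres^{\Delta}_n$-structure: in the direction (ii)$\Rightarrow$(i) the lemma is invoked for the operator $sx=\max T_x$ on an algebra that is not yet known to carry $\Delta$ (steps (6), (9) and (11) of that proof), and your $\Delta$-based argument would not be available there. So if your proof were adopted verbatim, the lemma would need to be restated (or reproved) in its $\Delta$-free form to support that application; as a proof of the lemma for $\A\in\lukres^{\Delta}_n$, however, it is complete.
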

\begin{dem} Let $x,z\in A$ and $t\in T(\A)$, then
\begin{center}
\begin{tabular}{rcl}
& &$ ((x\imn t)\iml ((x\imn t)\iml z)) \iml ((x\imn t)\iml z)$\\[1mm]
&$\stackrel{(\L3)}{=}$ &$(((x\imn t)\iml z))\iml(x\imn t))\iml (x\imn t)$ \\[1mm]
 & $\stackrel{(\L16)}{=}$ & $(x\imn(((x\imn t)\iml z)\iml t))\iml (x\imn t)$\\[1mm]
   & $\stackrel{(\L17)}{=}$ & $x\imn[(((x\imn t)\iml z)\iml t)\iml t]$\\[1mm]
   & $\stackrel{(\L3)}{=}$ & $x\imn[(t\iml((x\imn t)\iml z))\iml ((x\imn t)\iml z)]$\\[1mm]
& $\stackrel{(\L17)}{=}$ & $x\imn[((t\iml(x\imn t))\iml (t\iml z))\iml ((x\imn t)\iml z)]$\\[1mm]
  \end{tabular}
\end{center}
\

\noindent But $t\iml(x\imn t)\stackrel{(\L16)}{=} x\imn (t\iml t) \stackrel{(\L10)}{=} x\imn 1 \stackrel{(\L19)}{=}1$. Then,
\begin{center}
\begin{tabular}{rcl}
&  $=$ & $x\imn[(1\iml (t\iml z))\iml ((x\imn t)\iml z)]$\\[1mm]
 & $\stackrel{(\L5)}{=}$ & $x\imn[(t\iml z)\iml ((x\imn t)\iml z)]$ \\[1mm]
   & $\stackrel{(\L9)}{=}$ & $x\imn[(x\imn t)\iml ((t\iml z)\iml z)]$\\[1mm]
   & $\stackrel{(\L3)}{=}$ & $x\imn[(x\imn t)\iml ((z\iml t)\iml t)]$\\[1mm]
& $\stackrel{(\L16)}{=}$ & $(x\imn t)\iml[x\imn ((z\iml t)\iml t)]$\\[1mm]
& $\stackrel{(\L16)}{=}$ & $(x\imn t)\iml[(z\iml t)\iml(x\imn  t)]$\\[1mm]
& $\stackrel{(\L1)}{=}$ & $1$
  \end{tabular}
\end{center}
\noindent Hence, $(x\imn t)\iml ((x\imn t)\iml z) \leq (x\imn t)\iml z$ and,  by  (\L12), $(x\imn t)\iml z\leq (x\imn t)\iml ((x\imn t)\iml z)$. Therefore, $(x\imn t)\iml z =  (x\imn t)\iml ((x\imn t)\iml z)$ and $x\imn t\in T(\A)$.
\end{dem}

\

\noindent We end this section establishing necessary and sufficient conditions for an arbitrary $n$-valued \L ukasiewicz algebra to admit  a structure of $\mbox{\lukres}^{\Delta}_n$-algebra.

\begin{theorem} Let $\A=\langle A, \iml, 1\rangle$ be an arbitrary \lukres$_n$-algebra. Then, the following conditions are equivalent.
\begin{enumerate}[\rm (i)]
\item $\A$ admits a structure of $\mbox{\lukres}^{\Delta}_n$-algebra,
\item for every $x\in A$, $T_x$ has greatest element.
\end{enumerate}
\end{theorem}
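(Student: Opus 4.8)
The implication (i)$\Rightarrow$(ii) is exactly Proposition~\ref{prop29}. For (ii)$\Rightarrow$(i) I would exploit that, by the uniqueness lemma above together with Proposition~\ref{prop29}, the \emph{only} possible operator is $x\mapsto\max T_x$; so the plan is to \emph{put} $\Delta x:=\max T_x$ (which exists by (ii)) and to verify that $\langle A,\rightarrowtail,\Delta,1\rangle$ satisfies ($\Delta$\L1) and ($\Delta$\L2). First comes the bookkeeping: from $\Delta x\in T_x$ we get $\Delta x\le x$ and $\Delta x\in T(\A)$; for $t\in T(\A)$ one has $t=\max T_t$, i.e. $\Delta t=t$ (hence $\Delta\Delta x=\Delta x$); and $x\le y$ gives $T_x\subseteq T_y$, so $\Delta x\le\Delta y$. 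I also want two ambient facts. Iterating the defining identity of a Tarskian element shows $t\in T(\A)\Rightarrow t\rightarrowtail z=t\imn z$ for all $z$, whence, by Lemma~\ref{lem210} (whose proof uses only the $\lukres_n$ identities), $s\rightarrowtail t\in T(\A)$ whenever $s,t\in T(\A)$. Finally, iterating (\L12) gives $z\le x\imn z$ for all $x,z$, and an easy induction from (\L8) and (\L11) gives that $\imk$ is antitone in its first argument.

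The heart of the matter — the one point where the \emph{maximality} of $\Delta x$ is indispensable — will be the identity
$$x\imn\Delta x=1.\qquad(\star)$$
To prove $(\star)$ I would put $w:=x\imn\Delta x$. By Lemma~\ref{lem210} $w\in T(\A)$, hence $w\rightarrowtail\Delta x\in T(\A)$; and by (\L11) and Proposition~\ref{prop22}(ii), $w\rightarrowtail\Delta x\le w\rightarrowtail x=(x\imn\Delta x)\rightarrowtail x=x$. Thus $w\rightarrowtail\Delta x$ is a Tarskian element below $x$, so $w\rightarrowtail\Delta x\le\Delta x$, and with (\L12) this forces $w\rightarrowtail\Delta x=\Delta x$. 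Then $w\vee\Delta x=(w\rightarrowtail\Delta x)\rightarrowtail\Delta x=\Delta x\rightarrowtail\Delta x=1$ by (\L10), while $\Delta x\le w$ (from $z\le x\imn z$) gives $w\vee\Delta x=(\Delta x\rightarrowtail w)\rightarrowtail w=1\rightarrowtail w=w$ by (\L3) and (\L5); hence $w=1$.

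Granting $(\star)$, ($\Delta$\L1) is routine: $\Delta x\rightarrowtail y\le x\imn(\Delta x\rightarrowtail y)=(x\imn\Delta x)\rightarrowtail(x\imn y)=1\rightarrowtail(x\imn y)=x\imn y$, using $z\le x\imn z$, (\L17), $(\star)$ and (\L5); and the reverse inequality follows from $\Delta x\le x$, antitonicity of $\imn$ in its first slot, and $\Delta x\rightarrowtail y=\Delta x\imn y$ (since $\Delta x\in T(\A)$). So $\Delta x\rightarrowtail y=x\imn y$.

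It then remains to check ($\Delta$\L2), which by ($\Delta$\L1) reads $\Delta(x\imn y)=x\imn\Delta y$. I would show that the Tarskian elements $a:=\Delta(x\imn y)$ and $a':=x\imn\Delta y$ (the latter in $T(\A)$ by Lemma~\ref{lem210}) coincide. Both are fixed by the operator $x\imn(-)$: for $a'$ by Remark~\ref{rem1}(ii); for $a$ because $x\imn a$ is Tarskian with $a\le x\imn a\le x\imn(x\imn y)=x\imn y$, so $x\imn a\in T_{x\imn y}$ and hence equals $a$. Applying ($\Delta$\L1) to $a$ and to $a'$ (which are their own $\Delta$'s), Proposition~\ref{prop22}(i), and the chain $y\imn b=\Delta y\rightarrowtail b=\Delta y\imn b$, one obtains $a\rightarrowtail(x\imn b)=x\imn(y\imn b)=x\imn(\Delta y\imn b)=a'\rightarrowtail(x\imn b)$ for every $b$; specialising $b:=a$ and $b:=a'$ and using the fixed‑point property gives $a'\le a$ and $a\le a'$, so $a=a'$. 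The only genuine obstacle is $(\star)$: everything else is manipulation with the identities of Section~\ref{s3} and the preliminaries, but $(\star)$ is precisely where the assumption that $T_x$ has a \emph{greatest} element (in particular, is nonempty) cannot be dropped, as the five‑element example above already makes clear.
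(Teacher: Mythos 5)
Your proposal is correct and takes essentially the same route as the paper: (i)$\Rightarrow$(ii) is Proposition \ref{prop29}, and for (ii)$\Rightarrow$(i) you set $\Delta x:=\max T_x$ and verify ($\Delta$\L1) and ($\Delta$\L2), pivoting---exactly as the paper does in its step (9)---on the identity $x\imn\Delta x=1$, proved from the maximality of $\Delta x$ together with Lemma \ref{lem210}. The only differences are local: your direct derivation of $(\star)$ and your fixed-point argument identifying $\Delta(x\imn y)$ with $x\imn\Delta y$ replace the paper's auxiliary computations (7), (8) and (11), but they rest on the same ingredients and are sound.
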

\begin{dem}(i) $\Longrightarrow$ (ii): It is consequence of Proposition \ref{prop29}.\\[1.5mm]
(ii) $\Longrightarrow$ (i): Let $s:A\fun A$ the operation defined by 
$$sx=\max\{t\in T(\A) : t\leq x\}=\max T_x.$$
Then, it is not difficult to check that, for every $x,y\in A$, they hold: (1) \, $sx\leq x$, (2) \, $ssx=sx$ and (3) if $x\leq y$ then $sx\leq sy$. Besides, (4) if $z\in T(\A)$ and $z\leq x$, then $z\leq sx$ \, and \,  (5) if $y\in T(\A)$ then $y= sy$.
Finally, for every $x,y\in A$, since $sy\in T(\A)$ by Lemma \ref{lem210}, $sx\iml sy\in T(\A)$ and by (5), we have \, (6) $s(sx\iml sy)=sx \iml sy$.\\
Let us prove:\\
(7)\, \underline{$sx\iml sy \leq s(sx\iml y)$}. Indeed, by (1), $sy\leq y$ and by (\L11) \, $sx\iml sy\leq sx\iml y$. Then, from (3) and (6), $sx\iml sy\leq s(sx\iml y)$.\\[1.5mm]
(8)\,  \underline{$x \imn y \leq sx\iml y$}. By (1), $sx\leq x$ then, by (\L8), $x\iml y\leq sx\iml y$ and then (8) holds for $n=1$. (I.H.) Suppose that  $x \imn y \leq sx\iml y$. By (\L11), we have (*) $x\iml(x\imn y)\leq x\iml(sx\iml y)$. On the other hand,
\begin{center}
\begin{tabular}{rcl}
(**) \, $x\iml(sx\iml y)$&  $\stackrel{(\L9)}{=}$ & $sx\iml(x\imn y)$ \\
 & $\stackrel{(\L13)}{=}$ & $(sx\iml x)\iml (sx\iml y)$ \\
   & $\stackrel{(1)}{=}$ & $1\iml (sx\iml y)$ \\
   & $\stackrel{(\L5)}{=}$ & $sx\iml y$\\ 
  \end{tabular}
\end{center}
Then, $x\rightarrow_{n+1} y = x\iml(x\imn y) \, \stackrel{(*)}{\leq} \, x\iml(sx\iml y) \, \stackrel{(**)}{=} \, sx\iml y$.\\[1.5mm]
(9) \, \underline{$x\imn sx =1$}. From (\L12), $sx\leq (x\iml sx)\iml sx$. Since $sx\in T(\A)$ and by Lemma\ref{lem210}, $x\imn sx\in T(\A)$ \, and by (5), \, (*) $s(x\imn sx)=x\imn sx$. On the other hand, by Proposition \ref{prop22}, (**) \, $sx=s((x\imn sx)\iml x) \, \stackrel{(*)}{=} \, s(s(x\imn sx)\iml x)$. From (7), we have \,  $s(x\imn sx)\iml sx  \stackrel{(7)}{\leq} s(s(x\imn sx)\iml x)  \stackrel{(**)}{=}  sx$; and therefore  $(s(x\imn sx)\iml sx) \iml sx=1$. That is, 
$$1\, =  \, (s(x\imn sx)\iml sx)\iml sx  \, \stackrel{(*)}{=} \, ((x\imn sx)\iml sx)\iml sx  = (x\imn sx)\vee sx=x\imn sx$$
Since, $sx\iml(x\imn sx) \stackrel{(\L16)}{=} x\imn (sx\iml sx)\stackrel{(\L10)}{=} x\imn 1 \stackrel{(\L7)}{=} 1$, and then $sx\leq x\imn sx$.\\[1.5mm]
(10) \, \underline{$sx\iml y = x\imn y$}. From
\begin{center}
\begin{tabular}{rcl}
$(sx\iml y)\iml(x\imn y)$&  $\stackrel{(\L16)}{=}$ & $x\imn((sx\iml y)\iml y)$ \\
 & $\stackrel{(\L3)}{=}$ & $x\imn((y\iml sx)\iml sx)$ \\
   & $\stackrel{(\L16)}{=}$ & $(y\iml sx)\iml(x\imn sx)$ \\
   & $\stackrel{(9)}{=}$ & $(y\iml sx)\imn 1 = y\iml sx $\\ 
  \end{tabular}
\end{center}
we have $sx\iml y \leq x\imn y$; and by (8),  $sx\iml y = x\imn y$.\\[1.5mm]
(11) \, \underline{$s(sx\iml y) = sx\iml sy$}.
\begin{center}
\begin{tabular}{rcl}
$s(sx\iml y) \iml (sx\iml sy)$&  $\stackrel{(10)}{=}$ & $(sx\iml y) \imn (sx\iml sy)$ \\
 & $\stackrel{(2)}{=}$ & $(ssx\iml y) \imn (ssx\iml sy)$ \\
   & $\stackrel{(10)}{=}$ & $(sx\imn y) \imn (sx\imn sy)$ \\
   & $\stackrel{(\L17)}{=}$ & $sx\imn (y\imn sy) $\\ 
   & $\stackrel{(9)}{=}$ & $sx\imn 1$\\ 
   & $\stackrel{(\L19)}{=}$ & $1 $\\ 
  \end{tabular}
\end{center}
Hence, defining $\Delta x = sx$, we have that $\langle A, \iml, \Delta, 1\rangle$ is an $\lukres^{\Delta}_n$-algebra.
\end{dem}

\

It is worth mentioning that, in \cite{AVF0}, it was considered the class of $n$-valued \L ukasiewicz residuation algebras, $n\geq 2$, enriched with the well-known {\em Moisil possibility operators}. More presicely, let $J=\{1,\dots, n\}$, the structure $\langle A, \iml, (\Delta_i)_{i\in J}, 1\rangle$ of type $(2,(1)_{i\in J}, 0)$ is an  $n$-valued \L ukasiewicz residuation algebras with Moisil possibility operators if \, $\langle A, \iml, 1\rangle \in \lukres_{n}$ and the following identities hold:
\begin{equation}\tag{\rm M\L1}
\Delta_1 x\rightarrowtail y \approx x \rightarrow_n y
\end{equation}
\begin{equation}\tag{\rm M\L2}
\Delta_i x \vee \Delta_i x \iml y  \approx 1, \, i\in J
\end{equation}
\begin{equation}\tag{\rm M\L3}
\Delta_i (\Delta_j x \iml \Delta_j y)  \approx \Delta_j x \iml \Delta_j y , \, i, j\in J
\end{equation}
\begin{equation}\tag{\rm M\L4}
(\Delta_1 x \iml \Delta_1 y)\iml ((\Delta_2 x \iml \Delta_2 y)\iml\dots\iml((\Delta_n x \iml \Delta_n y)\iml(x\iml y))\dots)\approx 1
\end{equation}
\begin{equation}\tag{\rm M\L5}
\Delta_i y \iml(\Delta_j x \vee \Delta_k (x\iml y))\approx 1, \, \, \,  j,k\in J,  \, \, \,  1\leq i \leq j+k
\end{equation}
\begin{equation}\tag{\rm M\L5}
\Delta_i (x\iml y) \iml  (\Delta_k x \iml \Delta_j y) \approx 1, \, \, \,  j,k\in J,  \, \, \,  1\leq i \leq j-k+1
\end{equation}

\

\noindent Later, in \cite{AVF1}, it was described a method for constructing the operators $\Delta_i$, $2\leq i\leq n$, from $\Delta_1$ and $\iml$, in every \lukres$_{n}$-algebra.

\begin{remark}\label{remMPO} If $\langle A, \iml, (\Delta_i)_{i\in J}, 1\rangle$ is an  $n$-valued \L ukasiewicz residuation algebras with Moisil possibility operators, then $\langle A, \iml, \Delta_1, 1\rangle \in \lukres^{\Delta}_n$. Conversely, given $\langle A, \iml, \Delta, 1\rangle \in \lukres^{\Delta}_n$, then taking $\Delta_1 x :=\Delta x$, it is possible to construct operators $\Delta_i$ on $A$, $2\leq i\leq n$,  in such a way that $\langle A, \iml, (\Delta_i)_{i\in J}, 1\rangle$ turns out to be an $n$-valued \L ukasiewicz residuation algebras with Moisil possibility operators.
\end{remark}

 \subsection{Implicative filters and simple algebras}\label{imfiltersect}
 

Recall that if $\A$ is an \L ukasiewicz residuation algebra,  $D \subseteq A$ is said to be  an implicative filter of $A$  if (1) $1\in D$ and (2) $D$t is closed under modus ponens,  i.e.,  for every $x,y\in A$ it holds: if $x,x\rightarrowtail y\in D$, then $y\in D$. If $\A$ is a $\cal K$-algebra for ${\cal K}\in \{\lukres, \lukres_{n}, \lukres^{\Delta}_{n}, {\bf W}_{n}\}$, we denote by $\D_{\cal K}(\A)$ and  $\E_{\cal K}(\A)$ the set of all implicative filters and maximal implicative filters of $\A$, respectively. 

\

\begin{lemma} Let $\A$ be a \L ukasiewicz residuation algebra and $D\in\D_{\lukres}(A)$. Then, $D$ is closed under {\rm $k$-weak modus ponens}, that is, for every $x,y\in A$ and $k\geq 0$ we have
$$x,x\rightarrow_{k} y\in D  \mbox{ \, implies \, } y\in D.$$
\end{lemma}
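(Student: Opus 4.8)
The plan is to prove this by induction on $k$, using the fact that for $k=1$ the operation $\rightarrow_1$ is precisely $\rightarrowtail$, so the base case is exactly the hypothesis that $D$ is closed under modus ponens. (The case $k=0$ is trivial, since $x\rightarrow_0 y = y$ by definition, so $y\in D$ already.)

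For the inductive step, suppose the claim holds for some $k\geq 1$, and assume $x, x\rightarrow_{k+1} y\in D$. By definition, $x\rightarrow_{k+1} y = x\rightarrowtail(x\rightarrow_k y)$. Since $x\in D$ and $x\rightarrowtail(x\rightarrow_k y)\in D$, modus ponens gives $x\rightarrow_k y\in D$. Now applying the induction hypothesis to $x\in D$ and $x\rightarrow_k y\in D$ yields $y\in D$, which completes the induction.

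The only subtlety worth spelling out is the interplay between the recursive definition of $\rightarrow_n$ (given earlier as $x\rightarrow_0 y := y$ and $x\rightarrow_{n+1} y := x\rightarrowtail(x\rightarrow_n y)$) and the single application of modus ponens needed to peel off one layer; once this bookkeeping is set up, there is no real obstacle. I do not anticipate needing any of the identities ($\L 7$)--($\L 16$); the argument uses only the definition of implicative filter and the definition of the iterated implication. One could alternatively observe that $x\rightarrow_k y = x\rightarrowtail(x\rightarrowtail(\cdots(x\rightarrowtail y)\cdots))$ with $k$ occurrences of $x$, and then strip off the $k$ copies of $x$ one at a time by $k$ successive applications of modus ponens, all with the same premise $x\in D$; this is the same proof phrased non-inductively.
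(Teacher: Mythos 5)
Your proof is correct and follows essentially the same route as the paper: induction on $k$, with the trivial base cases $k=0,1$ and an inductive step that unfolds $x\rightarrow_{k+1} y = x\rightarrowtail(x\rightarrow_k y)$, applies modus ponens once to obtain $x\rightarrow_k y\in D$, and then invokes the induction hypothesis. Nothing further is needed.
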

\begin{dem} We use induction on $k$. If $k=0$ or $k=1$ the result is obvious. Suppose that, $D$ is closed under $k$-weak modus ponens, and assume that (1) $x \in D$ and $x\rightarrow_{k+1} y \in D$. That is,  $x\iml(x\rightarrow_{k} y) \in D$ and by definition of implicative filter and (1),  we have (2) $x\rightarrow_{k} y\in D$. Then, from (1), (2) and the inductive hipothesis, $y\in  D$.
\end{dem}

\

\noindent Then, it is clear that if $\A$ is  a \L ukasiewicz residuation algebra then \, 
\begin{quote}
$D\in\D(A)$ \, iff \, $1\in D$ and $D$ is closed under $k$-weak modus ponens. \hfill (IF)
\end{quote}

\noindent For any class of algebras $\cal K$ and every $\bf A \in {\cal K}$, we denote by $Con_{\cal K}(\A)$ the set of all $\cal K$-congruences of $\A$.\\[2mm]
Now, let $\A\in \lukres^{\Delta}_n$. For any implicative filter $D$ of $\bf A$, we define (as usual) the equivalence relation $R(D)$ on $A$ as
$$R(D)=\{(x,y)\in A^2:x\rightarrow_n y,y\rightarrow_n x\in D\}.$$ 
More over, it is easy to show that $R(D)$ is a congruence relation of $\A$. We call $R(D)$  the congruence relation of $\A$ associated to $D$. 

\begin{remark}
It is not difficult to check that, given a congruence $\Theta$ of $\A$ we have that $|1|_{\Theta}$, the equivalence class of $1$ by $\Theta$, is an implicative filter. Besides, $ R(|1|_{\Theta})=\Theta$. 
\end{remark}

\begin{proposition}\label{prop211} Let $\A\in \mbox{\lukres}^{\Delta}_n$ and $D\in \D_{\mbox{\lukres}^{\Delta}_n}(\A)$. Then, for every $x\in A$,  
$$x\in D \mbox{ \, iff \, } \Delta x\in D.$$
\end{proposition}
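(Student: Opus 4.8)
The plan is to prove the two implications separately. The direction $\Delta x \in D \implies x \in D$ is immediate: by ($\Delta$\L3) we have $\Delta x \leq x$, so $\Delta x \rightarrowtail x = 1 \in D$, and since $D$ is closed under modus ponens, $\Delta x \in D$ forces $x \in D$.

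For the converse, suppose $x \in D$. The key observation is that $D$ is closed under $k$-weak modus ponens (Lemma above, criterion (IF)), so it suffices to exhibit some $k \geq 0$ with $x \rightarrow_k \Delta x \in D$ — ideally showing $x \rightarrow_k \Delta x = 1$ for a suitable $k$. First I would compute, using the definition of $\imn$ (that is, $\rightarrow_{n-1}$) together with ($\Delta$\L4), that
$$x \rightarrow_{n-1} \Delta x = x \imn \Delta x = 1.$$
Thus with $k = n-1$ we have both $x \in D$ and $x \rightarrow_{n-1} \Delta x \in D$; applying $(n-1)$-weak modus ponens gives $\Delta x \in D$, as desired.

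The only point requiring care is the identification $x \imn \Delta x$ with $x \rightarrow_{n-1} \Delta x$, which is simply the definition of the abbreviation $\imn$ in an $\lukres_n$-algebra, so that ($\Delta$\L4) directly reads as the equation displayed above; hence this step is routine. I therefore expect no real obstacle: both directions reduce to a one-line application of (weak) modus ponens inside $D$, once ($\Delta$\L3) and ($\Delta$\L4) are invoked. An alternative for the forward direction, avoiding the weak modus ponens lemma, would be to note that $R(D)$ is a congruence with $|1|_{R(D)} = D$; then $x \in D$ means $x \equiv 1$, whence $\Delta x \equiv \Delta 1 = 1$ by ($\Delta$\L5), so $\Delta x \in D$. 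I would likely present the argument via weak modus ponens as the primary route and mention the congruence argument as a remark if space permits.
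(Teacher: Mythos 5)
Your main argument is correct and is essentially the paper's own proof: for $x\in D$ it uses ($\Delta$\L4), i.e.\ $x\rightarrow_{n-1}\Delta x=1\in D$, together with closure of $D$ under $(n-1)$-weak modus ponens, and for the converse it uses ($\Delta$\L3) and ordinary modus ponens. One caution about your alternative remark: the compatibility of $R(D)$ with $\Delta$ is established in the paper (in the lemma showing $Con_{\mbox{\lukres}_n}(\A)=Con_{\mbox{\lukres}^{\Delta}_n}(\A)$) by appealing to this very proposition, so relying on that congruence fact here risks circularity, and the weak-modus-ponens route you chose as primary is the right one.
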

\begin{dem} Let $x\in D$. By ($\Delta$\L4), \, $x\imn\Delta x\in D$ and so $\Delta x\in D$. The converse is consequence of ($\Delta$\L3).
\end{dem}

\

\begin{lemma}\label{lem2.12} Let $\A\in \mbox{\lukres}^{\Delta}_n$. Then,  $Con_{\mbox{\lukres}_n}(\A)=Con_{\mbox{\lukres}^{\Delta}_n}(\A)$.
\end{lemma}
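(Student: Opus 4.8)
The statement to prove is Lemma~\ref{lem2.12}: $Con_{\mbox{\lukres}_n}(\A)=Con_{\mbox{\lukres}^{\Delta}_n}(\A)$ for $\A\in\mbox{\lukres}^{\Delta}_n$.

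The plan is to show the two inclusions. Since an $\mbox{\lukres}^{\Delta}_n$-algebra has more operations (it has $\Delta$ in addition to $\iml$ and $1$), every $\mbox{\lukres}^{\Delta}_n$-congruence is automatically a congruence with respect to the reduct operations $\iml$ and $1$, hence an $\mbox{\lukres}_n$-congruence; this gives $Con_{\mbox{\lukres}^{\Delta}_n}(\A)\subseteq Con_{\mbox{\lukres}_n}(\A)$ for free. The substantive direction is the reverse inclusion: given an equivalence relation $\Theta$ on $A$ compatible with $\iml$ (an $\mbox{\lukres}_n$-congruence), one must check it is also compatible with $\Delta$, i.e. $(x,y)\in\Theta$ implies $(\Delta x,\Delta y)\in\Theta$.

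First I would pass from $\Theta$ to its associated implicative filter $D=|1|_{\Theta}$, using the Remark that $R(|1|_\Theta)=\Theta$; so it suffices to show that if $x\rightarrow_n y,\ y\rightarrow_n x\in D$ then $\Delta x\rightarrow_n \Delta y,\ \Delta y\rightarrow_n \Delta x\in D$, equivalently (by Proposition~\ref{prop211} or directly by ($\Delta$\L9), since $\Delta x,\Delta y$ are Tarskian, $\Delta x\rightarrow_n\Delta y=\Delta x\rightarrowtail\Delta y$) that $\Delta x\rightarrowtail\Delta y\in D$ and $\Delta y\rightarrowtail\Delta x\in D$. The key computational step is the identity ($\Delta$\L12): $\Delta(x\rightarrow_n y)=\Delta x\rightarrowtail\Delta y$. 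From $x\rightarrow_n y\in D$ and Proposition~\ref{prop211} applied in $\mbox{\lukres}^{\Delta}_n$ — wait, we cannot yet assume $D$ is a $\mbox{\lukres}^{\Delta}_n$-filter; rather I would argue directly: $x\rightarrow_n y\in D$, and by ($\Delta$\L4) $(x\rightarrow_n y)\imn\Delta(x\rightarrow_n y)=1\in D$, so by $k$-weak modus ponens (the Lemma on $k$-weak modus ponens, with $k=n-1$) we get $\Delta(x\rightarrow_n y)\in D$, that is $\Delta x\rightarrowtail\Delta y\in D$ by ($\Delta$\L12). Symmetrically $\Delta y\rightarrowtail\Delta x\in D$. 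Hence $(\Delta x,\Delta y)\in R(D)=\Theta$, so $\Theta$ is compatible with $\Delta$, proving $Con_{\mbox{\lukres}_n}(\A)\subseteq Con_{\mbox{\lukres}^{\Delta}_n}(\A)$.

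The main obstacle is purely organisational: one must be careful that $D=|1|_\Theta$ is a priori only an implicative filter of the $\iml$-reduct, and that the fact "$x\in D$ iff $\Delta x\in D$" is available for such $D$ — but this follows exactly as in Proposition~\ref{prop211} using only ($\Delta$\L3), ($\Delta$\L4) and closure under $k$-weak modus ponens, which hold for any implicative filter of the reduct. Once that observation is in place the rest is the short chain above, and the two inclusions combine to give the equality. Alternatively, and perhaps more cleanly, one can avoid filters entirely: given an $\mbox{\lukres}_n$-congruence $\Theta$ with $(x,y)\in\Theta$, note $(x\rightarrow_n y,\ 1)\in\Theta$ and $(y\rightarrow_n x,\ 1)\in\Theta$ by (\L20), apply $\Delta$-compatibility-free reasoning: since $\Theta$ respects $\iml$, it respects $\rightarrow_n$, hence $(\Delta(x\rightarrow_n y),\Delta 1)=(\Delta x\rightarrowtail\Delta y,\,1)\in\Theta$ — but this last step again needs $\Delta$-compatibility, so the filter route is the honest one and I would present it as above.
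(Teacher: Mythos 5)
Your proposal is correct and follows essentially the same route as the paper: the nontrivial inclusion is handled by passing to the implicative filter $|1|_{\Theta}$ (with $R(|1|_{\Theta})=\Theta$), observing that $z\in D$ implies $\Delta z\in D$ via ($\Delta$\L4) and $k$-weak modus ponens (exactly Proposition \ref{prop211}), and then transferring this to $\Delta x$ and $\Delta y$ through a $\Delta$/implication identity. The only cosmetic difference is that you invoke ($\Delta$\L12) applied to $x\rightarrow_n y$, while the paper starts from $x\iml y\in|1|_{R}$ and uses ($\Delta$\L15); both are available and yield the same conclusion.
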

\begin{dem} We just have to show that $Con_{\mbox{\lukres}_n}(\A) \subseteq Con_{\mbox{\lukres}^{\Delta}_n}(\A)$. Let $R\in Con_{\mbox{\lukres}_n}(\A)$, then  $R=R(|1|_{R})$. Let $(x,y)\in R$, then (1)$x \iml y\in |1|_{R}$ and $y \iml x\in |1|_{R}$. From (1) and Proposition \ref{prop211}, \, $\Delta(x\iml y)\in |1|_{R}$ and by ($\Delta$\L15), \,  $\Delta x\iml \Delta y \in |1|_{R}$. Analogously we get that $\Delta y\iml \Delta x \in |1|_{R}$ and by (IF), \, $(\Delta x, \Delta y)\in  R$. Therefore, $R\in Con_{\mbox{\lukres}^{\Delta}_n}(\A)$.
\end{dem}

\


\


\begin{lemma}\label{lemprodsub} \, If $\A\in\mbox{\lukres}^{\Delta}_n$ is non trivial, then $\A$ is a subdirect product of simple $\mbox{\lukres}^{\Delta}_n$-algebras.
\end{lemma}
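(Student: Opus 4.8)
The plan is to apply the standard Birkhoff-type argument for subdirect decomposition: every (non-trivial) algebra is a subdirect product of its quotients by completely meet-irreducible congruences, so it suffices to show that such quotients are simple. Concretely, I would first invoke Lemma \ref{lem2.12}, which tells us that the congruence lattices $Con_{\mbox{\lukres}_n}(\A)$ and $Con_{\mbox{\lukres}^{\Delta}_n}(\A)$ coincide; this lets us move freely between reasoning about $\lukres_n$-congruences and $\lukres^{\Delta}_n$-congruences. Together with the remark identifying each congruence $\Theta$ with the implicative filter $|1|_{\Theta}$ (and $R(|1|_{\Theta})=\Theta$), the congruence lattice of $\A$ is isomorphic to the lattice $\D_{\mbox{\lukres}^{\Delta}_n}(\A)$ of implicative filters of $\A$.

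Next I would carry out the reduction to simple algebras. By Birkhoff's subdirect representation theorem, $\A$ embeds as a subdirect product of the family $\{\A/\Theta : \Theta \text{ completely meet-irreducible in } Con(\A)\}$, and each $\A/\Theta$ is subdirectly irreducible. So the real task is: every subdirectly irreducible $\lukres^{\Delta}_n$-algebra is simple. For this I would show that a non-trivial subdirectly irreducible $\A$ has no implicative filters other than $\{1\}$ and $A$, equivalently that $\{1\}$ is a maximal (and in fact the unique coatom is forced to be trivial) — the key algebraic input being that in an $n$-valued \L ukasiewicz residuation algebra the congruence lattice is relatively complemented (or at least that proper filters can be "separated"). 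The standard fact for \L ukasiewicz-type algebras is that $\lukres_n$ is a discriminator variety (or congruence-distributive with equationally definable principal congruences and the filters behave like ideals in an MV-algebra of bounded order), so subdirectly irreducible equals simple. I would state this, citing the finite-valuedness: for each $a\neq 1$ the principal filter generated by $a$ together with $\Delta$-closedness (Proposition \ref{prop211}) forces enough elements in so that any proper filter avoiding a fixed monolith element is contained in a maximal one, and the intersection of the maximal filters is $\{1\}$.

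The cleanest route, and the one I would actually write, is: show (i) $\A$ is semisimple in the sense that the intersection of all maximal implicative filters of $\A$ is $\{1\}$, and (ii) for a maximal filter $M$, the quotient $\A/R(M)$ is simple. For (i) one uses that $\A\in\lukres_n$ has elements of bounded "\L ukasiewicz order" $n-1$: given $a\neq 1$, the set $\{x : a \rightarrow_{n-1} \cdots \leq x\}$-type construction, or more simply Zorn's lemma, yields a maximal filter not containing $a$ (the standard argument, since $1\notin$ any proper filter's complement-generated object, works because $\rightarrow_{n-1}$ collapses iterated implication). For (ii), a maximal implicative filter is prime and, by finite-valuedness, the quotient is a simple chain isomorphic to some $\mbox{\bf \L}_k^{\Delta}$ with $2\le k\le n$ (using Facts \ref{exam1} and Example \ref{exampleLn}); in particular it has no proper non-trivial congruences. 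Combining (i) and (ii), the natural map $\A \to \prod_{M\in\E(\A)} \A/R(M)$ is an embedding (injectivity from (i)) and each factor is simple, which is exactly the claim.

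The main obstacle I anticipate is part (i): proving that the maximal implicative filters separate points, i.e. $\bigcap \E_{\lukres^{\Delta}_n}(\A) = \{1\}$. This requires showing every proper implicative filter extends to a maximal one that still omits a prescribed element $a\neq 1$, and the non-trivial ingredient is verifying that the filter generated by a proper filter together with "$a\to_{n-1}(\cdot)$" remains proper — this is where the $n$-valued axiom \L6 and the identity $x\imn y = x\imn(x\imn y)$ from Remark \ref{rem1}(ii) do the work, guaranteeing that iterating implication stabilizes so Zorn's lemma applies cleanly. Once that separation is in hand, everything else is bookkeeping with the dictionary between filters and congruences already established in the preceding lemmas.
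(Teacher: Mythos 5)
Your overall strategy --- show that the maximal implicative filters of $\A$ intersect in $\{1\}$ and that quotients by maximal filters are simple, then embed $\A$ into the corresponding product --- is sound in principle, and it is essentially how the paper itself establishes semisimplicity later (Corollary \ref{maximal}, Theorem \ref{teoavf}). But as written your plan has a genuine gap at its key step. Zorn's lemma applied to the filters omitting a fixed $a\neq 1$ only produces a filter that is maximal \emph{with respect to omitting} $a$, i.e.\ an implicative filter tied to $a$ in Monteiro's sense (Definition \ref{Ligado}); it does not by itself produce a maximal filter, and the quotient by a tied filter is a priori only subdirectly irreducible. The stabilization $x\imn(x\imn y)=x\imn y$ that you lean on shows that the filter generated by $M\cup\{b\}$ can be described as $\{y: b\imn y\in M\}$, but it does not show that adjoining an arbitrary $b\notin M$ to a tied filter generates all of $A$. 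Closing that gap --- tied implies maximal, equivalently subdirectly irreducible implies simple --- is precisely the nontrivial content that the paper later extracts from {\rm ($\Delta$\L 4)}, {\rm (\L 21)}--{\rm (\L 23)} and the Monteiro-algebra machinery of \cite{FOS1}; your alternative appeal to ``$\lukres_n$ is a discriminator variety'' is asserted, not proved. Also, in your step (ii) you do not need (and should not invoke here) the identification of the quotients with the chains $\mbox{{\bf \L}}^{\Delta}_{k}$, which is the later Theorem \ref{teoMj}/Corollary \ref{lemsim}; simplicity of $\A/R(M)$ for $M$ maximal already follows from the filter--congruence correspondence.

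You should also note that the paper's own proof of this lemma is quite different and much shorter: it is a transfer argument. By Lemma \ref{lem2.12}, the $\lukres_n$-congruences and the $\lukres^{\Delta}_n$-congruences of $\A$ (and of its quotients) coincide, so the known subdirect decomposition of the reduct $\langle A,\iml,1\rangle$ into simple $\lukres_n$-algebras is automatically a subdirect decomposition in $\lukres^{\Delta}_n$, with factors that remain simple as $\lukres^{\Delta}_n$-algebras again by Lemma \ref{lem2.12}. Your route rebuilds semisimplicity from scratch for the expanded algebras instead of borrowing it from the $\Delta$-free variety; that is a legitimate alternative, but then the tied-implies-maximal step must actually be carried out rather than gestured at.
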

\begin{dem} It is consequence of Lemma \ref{lem2.12}.
\end{dem}

\begin{lemma}
There exists a lattice-isomorphism between $Con_{\mbox{\lukres}^{\Delta}_n}(\A)$ and $\D_{\mbox{\lukres}^{\Delta}_n}(\A)$.
\end{lemma}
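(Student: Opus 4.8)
The plan is to establish the standard correspondence between congruences and implicative filters, using the machinery already set up in the excerpt. First I would define the two maps: from a congruence $\Theta \in Con_{\luklog_n}(\A)$ send it to its unit class $|1|_\Theta$, which the Remark preceding Proposition \ref{prop211} tells us is an implicative filter; conversely, from an implicative filter $D \in \D_{\luklog_n}(\A)$ send it to the associated relation $R(D) = \{(x,y) : x\rightarrow_n y,\, y\rightarrow_n x \in D\}$, which is already noted to be a congruence. By Lemma \ref{lem2.12} the set $Con_{\luklog_n}(\A)$ coincides with $Con_{\lukres_n}(\A)$, so there is no extra work verifying that $R(D)$ respects $\Delta$; it automatically does.

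Next I would verify that these two maps are mutually inverse. One direction, $R(|1|_\Theta) = \Theta$, is exactly the content of the Remark after the definition of $R(D)$, so it can be cited directly. For the other direction, $|1|_{R(D)} = D$, I would argue: $x \in |1|_{R(D)}$ iff $x \rightarrow_n 1 \in D$ and $1 \rightarrow_n x \in D$; by (\L19) the first is always $1 \in D$, and by (\L18) the second is just $x \in D$, giving the equality. This uses only Proposition \ref{lemaux} and the definition of implicative filter.

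Finally I would check that both maps are order-preserving for the inclusion orders, which is immediate from the definitions: if $\Theta_1 \subseteq \Theta_2$ then $|1|_{\Theta_1} \subseteq |1|_{\Theta_2}$, and if $D_1 \subseteq D_2$ then clearly $R(D_1) \subseteq R(D_2)$. An order-preserving bijection whose inverse is also order-preserving is a lattice isomorphism, so the conclusion follows. I do not anticipate a serious obstacle here: the only point requiring care is confirming that the filter-to-congruence and congruence-to-filter passages land in the $\Delta$-enriched categories rather than merely the $\luklog$-reduct ones, and that is precisely what Lemma \ref{lem2.12} (together with Proposition \ref{prop211}, which shows filters are automatically closed under $\Delta$ and its "inverse") guarantees. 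The bulk of the proof is therefore bookkeeping rather than genuine difficulty.
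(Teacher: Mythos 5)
Your proposal is correct and follows essentially the same route as the paper: the paper's proof simply invokes Lemma \ref{lem2.12} (that $\lukres_n$- and $\lukres^{\Delta}_n$-congruences coincide) together with the standard filter--congruence correspondence via $\Theta\mapsto|1|_{\Theta}$ and $D\mapsto R(D)$, which is exactly what you spell out in detail (your verification of $|1|_{R(D)}=D$ via (\L18)--(\L19) and of mutual order-preservation is the bookkeeping the paper leaves implicit). The only blemish is notational: you write $Con_{\luklog_n}(\A)$ where you mean the congruence lattice of the algebra, $Con_{\mbox{\lukres}^{\Delta}_n}(\A)$, since $\luklog_n$ denotes the logic rather than the variety.
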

\begin{dem} It is consequence of Lemma \ref{lem2.12}.
\end{dem}

\begin{definition}{\rm (A. Monteiro).}\label{Ligado}
Let $\bf A$ be an  \lukres$^{\Delta}_n$-algebra, $D\in\D_{\mbox{\lukres}^{\Delta}_n}(A)$ and $p\in A$. We say that $D$ is an  implicative filter tied  to $p$ if $p\notin D$ and for any $D'\in\D_{\mbox{\lukres}^{\Delta}_n}(A)$ such that $D\subsetneq D'$, then $p\in D'$.
\end{definition}

Recall that for a given \lukres$^{\Delta}_n$-algebra $\bf A$, we say that an implicative filter $M$ is maximal if $M$ is proper and for any $D\in \D_{\mbox{\lukres}^{\Delta}_n}(A)$,  $M\subseteq D$ implies $D=A$ or $M=D$. 

\begin{lemma}{\rm (\cite[Lemma 6.4]{FOS1}).}\label{Mont-2}
Let $\bf A$ be an \lukres$^{\Delta}_n$-algebra and $M$ a maximal implicative filter of $A$. Then, for every $x\in A\setminus M$, we have that $x\rightarrow_n y\in A$  for every $y\in A$.
\end{lemma}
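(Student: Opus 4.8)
The statement to prove is Lemma \ref{Mont-2}: for a maximal implicative filter $M$ of an $\mbox{\lukres}^{\Delta}_n$-algebra $\A$ and any $x\in A\setminus M$, we have $x\rightarrow_n y\in M$ for every $y\in A$. (I read the claim "$x\rightarrow_n y\in A$" as the evident typo for "$x\rightarrow_n y\in M$", which is what makes the statement interesting.) The plan is to exploit maximality together with the characterization of implicative filters by $k$-weak modus ponens (property (IF)) and the axiom (\L6), i.e. $(x\imn y)\vee x\approx 1$.

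First I would recall that, by the lattice-isomorphism between $\D_{\mbox{\lukres}^{\Delta}_n}(\A)$ and $Con_{\mbox{\lukres}^{\Delta}_n}(\A)$ (and Lemma \ref{lem2.12}), maximal implicative filters correspond to maximal congruences, so $\A/R(M)$ is a simple algebra; combined with Lemma \ref{lemprodsub} this pins down the structure of simple $\mbox{\lukres}^{\Delta}_n$-algebras. But the more self-contained route is direct: fix $x\notin M$. Since $M$ is maximal, the smallest implicative filter containing $M\cup\{x\}$ is all of $A$; in particular it contains an arbitrary $y\in A$. Using the standard description of the generated filter in a \L ukasiewicz residuation algebra — $y$ lies in the filter generated by $M\cup\{x\}$ iff there is some $m\in M$ and some $k\ge 0$ with $m\rightarrowtail(x\rightarrow_k y)\in M$, which by modus ponens in $M$ reduces to $x\rightarrow_k y\in M$ for some $k\ge 0$ — we get that for every $y\in A$ there is $k=k(y)$ with $x\rightarrow_k y\in M$.

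Next I would promote this to the uniform exponent $k=n$. Note $x\rightarrow_0 x = x\notin M$, so the least $k$ with $x\rightarrow_k y\in M$ may genuinely depend on $y$; but by (\L11), $x\rightarrow_k y\le x\rightarrow_{k'} y$ whenever $k\le k'$ (monotonicity of $z\imk(-)$, applied via $y$ sitting under $x\rightarrowtail y$ — more precisely, each application of $x\rightarrowtail(-)$ only increases the value by (\L12)), so if $x\rightarrow_k y\in M$ then $x\rightarrow_{k+1}y\in M$ as well. Hence it suffices to bound $k(y)$ by $n$. Here I would invoke the $n$-valuedness crucially: applying $x\rightarrow_n(-)$ already "saturates", because in any $\mbox{\lukres}_n$-algebra $x\rightarrow_n z = x\imn z = x\imn(x\imn z) = \dots$ stabilizes (Remark \ref{rem1}(ii)), so $x\rightarrow_k y\le x\rightarrow_n y$ for \emph{all} $k\ge n$ — wait, one needs $k\ge n-1$ — giving $x\rightarrow_n y\in M$ directly from $x\rightarrow_{k(y)}y\in M$ once $k(y)\ge n-1$, and from the monotonicity step when $k(y)<n-1$. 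Either way $x\rightarrow_n y\in M$.

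The main obstacle, and where I would spend the care, is the exact description of the implicative filter generated by $M\cup\{x\}$: one must verify that $\langle M\cup\{x\}\rangle = \{\,z\in A : x\rightarrow_k z\in M \text{ for some } k\ge 0\,\}$ is indeed closed under $k$-weak modus ponens (using (\L16) to commute the $x\rightarrow_k$ past an implication, so that $x\rightarrow_k z, x\rightarrow_k(z\rightarrowtail w)\in M$ forces $x\rightarrow_{2k}w\in M$ after an application of modus ponens inside $M$), and that it contains $M$ and $x$. Granting that, maximality of $M$ forces this set to be $A$, and the rest is the stabilization argument above. I would phrase the write-up so that the only facts invoked are (IF), (\L11), (\L16), Remark \ref{rem1}, and the maximality hypothesis, keeping it independent of the congruence-theoretic detour.
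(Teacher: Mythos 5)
First, note the paper itself gives no argument for this lemma: it is imported verbatim from \cite[Lemma 6.4]{FOS1}, so there is no internal proof to compare against; your proposal is judged on its own. You correctly read ``$x\rightarrow_n y\in A$'' as the typo for ``$x\rightarrow_n y\in M$'', and the overall strategy (the filter generated by $M\cup\{x\}$ is $A$ by maximality, then a uniform bound on the exponent via stabilization at $n-1$ and upward closure of $M$) is sound. The one step that is under-justified is exactly the one you flag as the main obstacle: closure of $F=\{z: x\rightarrow_k z\in M \mbox{ for some }k\}$ under modus ponens. Applying (\L16) to $x\rightarrow_k(z\rightarrowtail w)$ only yields $z\rightarrowtail(x\rightarrow_k w)$, with $z$ (not a member of $M$) in antecedent position, so ``an application of modus ponens inside $M$'' does not go through as written. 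The claim $x\rightarrow_k z,\ x\rightarrow_k(z\rightarrowtail w)\in M \Rightarrow x\rightarrow_{2k}w\in M$ is true, but it needs the auxiliary inequality $z\rightarrowtail w\preceq (x\rightarrow_k z)\rightarrowtail(x\rightarrow_k w)$ (obtained by iterating (\L15)), after which (\L11) and (\L16) give $x\rightarrow_k(z\rightarrowtail w)\preceq (x\rightarrow_k z)\rightarrowtail(x\rightarrow_{2k} w)$, and then upward closure of $M$ plus modus ponens finish. So the gap is fixable, but it is a genuine missing lemma in your write-up.

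A cleaner route avoids variable exponents altogether and matches the $n$-valued machinery already in the paper: put $F=\{z\in A : x\imn z\in M\}$. Then $M\subseteq F$ since $z\preceq x\imn z$ by iterating (\L12) and $M$ is upward closed; $x\in F$ since $x\imn x=1$ by (\L20); and $F$ is closed under modus ponens in one line, because (\L17) gives $x\imn(z\rightarrowtail w)=(x\imn z)\rightarrowtail(x\imn w)$, so $x\imn z\in M$ and $x\imn(z\rightarrowtail w)\in M$ yield $x\imn w\in M$ by modus ponens in $M$. Since $x\in F\setminus M$, maximality forces $F=A$, i.e.\ $x\imn y\in M$ for every $y$, and $x\imn y\preceq x\rightarrow_n y$ gives the stated conclusion. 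This is essentially your argument with the ``some $k$'' bookkeeping and the (\L16)/(\L15) detour replaced by a single use of (\L17) (equivalently Proposition \ref{prop22}(i)), and it is presumably the shape of the cited proof in \cite{FOS1}.
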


For a given \lukres$^{\Delta}_n$-algebra $\bf A$ and according to ($\Delta$\L4), (\L21), (\L22) and (\L23), we can prove the following corollary and theorem, respectively, following \cite{FOS1}.

\begin{corollary}\label{maximal}
For a given \lukres$^{\Delta}_n$-algebra $\bf A$, each implicative filter  tied to some element of $A$ is maximal and vice versa.
\end{corollary}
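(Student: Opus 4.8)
The plan is to prove both implications of Corollary \ref{maximal} by relating ``tied to some element'' with maximality, exploiting the fact that in an \lukres$^{\Delta}_n$-algebra every proper implicative filter is contained in a maximal one and that, above a maximal filter, the quotient is a simple algebra. First I would fix an \lukres$^{\Delta}_n$-algebra $\bf A$ and recall the two directions to be shown: (a) if $D$ is tied to some $p\in A$ then $D$ is maximal; and (b) if $M$ is maximal then $M$ is tied to some element of $A$.

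For direction (a), suppose $D$ is tied to $p$. By Definition \ref{Ligado} we have $p\notin D$, so $D$ is proper; I would take any $D'\in\D_{\mbox{\lukres}^{\Delta}_n}(A)$ with $D\subseteq D'$ and $D\neq D'$, and show $D'=A$. Since the inclusion is strict, by the ``tied'' hypothesis $p\in D'$. To conclude $D'=A$ I would use Lemma \ref{Mont-2}-type reasoning together with (\L6): because $D\subsetneq D'$ pick $a\in D'\setminus D$; from $a\notin D$ and maximality-in-the-making one shows, via the $n$-valued identity $(a\rightarrow_{(n-1)}p)\vee a=1$ and $k$-weak modus ponens, that $p\in D'$ forces every element of $A$ into $D'$. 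Concretely, $(a\rightarrow_{n-1}p)\vee a = 1 \in D'$, and $a\in D'$, so one can push $a\rightarrow_{n-1}p$ into $D'$ using the filter properties; combined with $p\in D'$ this collapses $D'$ to $A$ (here one uses that $p$ was the ``witness'' and that any strictly larger filter must contain it, then argues as in the proof of Lemma \ref{Mont-2} that a filter containing a non-member of a would-be maximal filter is everything).

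For direction (b), suppose $M$ is maximal. I would produce a witness element $p$ to which $M$ is tied. The natural choice is any $p\in A\setminus M$ (such $p$ exists since $M$ is proper). Then $p\notin M$ by construction, and if $D'\in\D_{\mbox{\lukres}^{\Delta}_n}(A)$ satisfies $M\subsetneq D'$, maximality of $M$ gives $D'=A$, hence trivially $p\in D'$. So $M$ is tied to $p$, which completes this direction. This half is essentially immediate from the definition of maximal filter; the content is all in direction (a).

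The main obstacle is direction (a): showing that a filter tied to $p$ is actually maximal, i.e. that no proper filter strictly contains it. The delicate point is that ``tied to $p$'' only guarantees $p$ enters every strictly larger filter — I must turn ``contains $p$'' into ``equals $A$''. This is exactly where the $n$-valuedness is essential: using Lemma \ref{Mont-2} (or re-deriving its argument) one shows that if a proper filter $D'$ strictly contains $D$ and hence contains $p$, then picking $a\in D'\setminus D$ and invoking $(a\rightarrow_{n-1}p)\vee a=1$ together with $k$-weak modus ponens and the $\Delta$-identities ($\Delta$\L4), (\L21)--(\L23) flagged in the excerpt, one derives a contradiction with $D'$ being proper unless in fact $D$ was already maximal. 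I expect the bookkeeping with $\rightarrow_k$ and the supremum $\vee$ inside the filter to be the fiddly part, but it is routine given (IF) and the earlier lemmas; the argument then mirrors the corresponding result in \cite{FOS1} as the text indicates.
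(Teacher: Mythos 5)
Your ``vice versa'' half (maximal implies tied) is correct, but it is the trivial half. The gap is in the main direction. Knowing that every filter strictly containing $D$ contains $p$ does not by itself let you collapse such a filter to $A$: your one concrete step --- from $(a\imn p)\vee a=1\in D'$ and $a\in D'$ infer $a\imn p\in D'$ --- is invalid (a join lying in a filter together with one of its joinands tells you nothing about the other joinand; these filters are not prime), and even if you had $a$, $a\imn p$ and $p$ all in $D'$, nothing follows: there is no bottom element in the signature, so ``$p\in D'$ forces $D'=A$'' is precisely the property of the witness $p$ that must be established, not a consequence of it. Appealing to Lemma \ref{Mont-2} is also circular here, since that lemma presupposes maximality, which is what you are trying to prove for $D$.

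The missing idea is the Monteiro-style argument the paper delegates to \cite{FOS1}, and it is exactly where (\L20)--(\L23) and (\L17) enter. For $x\in A$ put $D(x)=\{z\in A: x\imn z\in D\}$; using (\L12), (\L17), (\L20) and closure of $D$ under $k$-weak modus ponens, $D(x)$ is an implicative filter containing $D$ and $x$. Now fix $y\in A$ and suppose $p\imn y\notin D$. Then $D(p\imn y)\supsetneq D$, so tiedness gives $p\in D(p\imn y)$, i.e.\ $(p\imn y)\imn p\in D$; Peirce's law (\L23) yields $((p\imn y)\imn p)\imn p=1\in D$, and detachment puts $p\in D$, contradicting $p\notin D$. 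Hence $p\imn y\in D$ for every $y\in A$, i.e.\ $p$ acts as a relative bottom for $D$. Then for any filter $D'\supsetneq D$ one has $p\in D'$ and $p\imn y\in D\subseteq D'$, so $y\in D'$ for all $y$, i.e.\ $D'=A$; since $p\notin D$, $D$ is proper, hence maximal. This generated-filter-plus-Peirce step is the actual content of the corollary (the paper itself only flags ($\Delta$\L4), (\L21)--(\L23) and refers to \cite{FOS1} rather than writing it out), and your sketch neither contains it nor a valid substitute.
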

 
\begin{theorem} \label{teoavf} 
The variety of  \lukres$^{\Delta}_n$-algebra is semisimple.  
\end{theorem}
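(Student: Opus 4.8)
The plan is to show that every non-trivial $\lukres^{\Delta}_n$-algebra is a subdirect product of simple ones, which combined with the standard characterization of semisimplicity (a variety is semisimple iff every subdirectly irreducible member is simple) yields the result. By Lemma~\ref{lemprodsub} we already know that any non-trivial $\A\in\lukres^{\Delta}_n$ is a subdirect product of simple $\lukres^{\Delta}_n$-algebras; hence it suffices to check that the class of simple algebras is a subclass whose subdirect products capture all of the variety, i.e.\ that each subdirectly irreducible algebra in $\lukres^{\Delta}_n$ is simple. So the first step is to take a subdirectly irreducible $\A\in\lukres^{\Delta}_n$ and, using the lattice-isomorphism between $Con_{\lukres^{\Delta}_n}(\A)$ and $\D_{\lukres^{\Delta}_n}(\A)$ (proved above), translate subdirect irreducibility into the statement that the filter lattice $\D_{\lukres^{\Delta}_n}(\A)$ has a unique atom, i.e.\ a least non-trivial implicative filter.

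The second step is the heart of the argument: I would show that any \emph{proper} implicative filter $D\subsetneq A$ is contained in a maximal one, and then use Corollary~\ref{maximal} (each implicative filter tied to an element is maximal and conversely) together with ($\Delta$\L4), (\L21), (\L22), (\L23) to argue that the congruence associated to a maximal filter collapses $\A$ to a simple quotient. Concretely, if $\A$ is subdirectly irreducible with least non-trivial filter $D_0$, I want to conclude $D_0=A$ is impossible and in fact that $\{1\}$ itself is already maximal, forcing $\A$ to be simple. For this I would use A.~Monteiro's notion of a filter tied to $p$: given any $p\neq 1$ in $\A$, the set $T_p=\{t\in T(\A):t\le p\}$ has a greatest element $\Delta p$ by Proposition~\ref{prop29}, and one can build, via Zorn's Lemma, an implicative filter that is maximal with respect to not containing $p$; by Corollary~\ref{maximal} this filter is maximal, and by Lemma~\ref{Mont-2} its quotient satisfies $x\rightarrow_n y\in$ (the quotient) for every $x\notin$ the filter, which is exactly the condition making the quotient a simple $n$-valued algebra. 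Chaining these facts, every non-trivial algebra has enough maximal filters to separate points, so $\{1\}=\bigcap\E_{\lukres^{\Delta}_n}(\A)$ and $\A$ embeds into a product of simple algebras; a subdirectly irreducible such $\A$ must therefore itself be simple.

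The main obstacle I anticipate is the passage from "every proper filter extends to a maximal one and maximal filters have simple quotients" to "subdirectly irreducible $\Rightarrow$ simple": one must verify that the intersection of all maximal implicative filters of a non-trivial $\A$ is exactly $\{1\}$, i.e.\ that maximal filters separate points. This is where ($\Delta$\L4) and the arithmetic identities (\L21)--(\L23) do the real work — they guarantee, following the argument of \cite{FOS1}, that for each $p\neq 1$ there is a maximal filter omitting $p$, equivalently a filter tied to $p$. Given that, the rest is the routine dictionary between subdirect decompositions, congruences, and filters already set up in the preceding lemmas. I would therefore structure the proof as: (1) recall $\{1\}=\bigcap\E_{\lukres^{\Delta}_n}(\A)$ for non-trivial $\A$ from Corollary~\ref{maximal} and Proposition~\ref{prop29}; (2) note each $\A/R(M)$ with $M$ maximal is simple by Lemma~\ref{Mont-2}; (3) conclude via the congruence–filter isomorphism that $\A$ is a subdirect product of simple algebras, hence the variety is semisimple.
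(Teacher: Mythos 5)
Your proof is correct and is essentially the paper's own argument spelled out: the paper merely observes that, thanks to ($\Delta$\L 4) and (\L 21)--(\L 23), the class fits the Monteiro-style framework of \cite{FOS1} and invokes Lemma 6.5 there, whose content is exactly your chain of steps --- Zorn's lemma together with Corollary \ref{maximal} to obtain, for each $p\neq 1$, a maximal implicative filter omitting $p$, Lemma \ref{Mont-2} to see that quotients by maximal filters are simple, and the filter--congruence isomorphism to assemble the subdirect representation by simple algebras. So there is no substantive difference; you simply make explicit the machinery the paper cites.
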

\begin{proof}
Taking into account Lemma  \ref{lemaux}, we have that the class of  $\lukres^{\Delta}_n$-algebras is in fact a $M$-algebra of Section 6 of \cite{FOS1}, so from Lemma 6.5 of the same paper we have proved the Theorem.
\end{proof}

As it said above, in  \cite{GPS2020}, it was studied the $n$-valued \L ukasiewicz residuation algebras expanded with Moisil possibility operators. Taking into account Remark \ref{remMPO}, it is clear that each  $\lukres^{\Delta}_n$-algebras  can be seen as a $n$-valued \L ukasiewicz residuation algebras expanded with Moisil possibility operators. Then,


\begin{lemma} Let $\A\in \lukres^{\Delta}_n$ and let $(\Delta_i)_{i\in J}$, where $J=\{1,\dots, n\}$, the family of unary operators indicated in Remark \ref{remMPO}. Then, the following identities hold in $\A$.
\begin{itemize}
\item[{\rm (M\L 7)}]  $\Delta_j 1\approx 1,$ $j\in J$

\item[{\rm (M\L 8)}]  $\Delta_1 x\leq \Delta_2x\leq \dots \leq \Delta_{n-1} x,$

\item[{\rm (M\L 9)}]  $\Delta_j x\iml (\Delta_j x\iml y) \approx \Delta_j x\iml y$, $j\in J,$

\item[{\rm (M\L 10)}]  $\Delta_j x\iml y \approx \Delta_j x \rightarrow_n
y,$ $j\in J,$ 

\item[{\rm (M\L 11)}]  $(\Delta_j x\iml y)\to \Delta_j x \approx \Delta_j x$, $j\in J,$

\item[{\rm (M\L 12)}]  $(x\iml y)\iml (\Delta_j x \to \Delta_j y)\approx 1$,  $j\in J,$

\item[{\rm (M\L 13)}]   $x\leq y$ implies $\Delta_j x\leq
\Delta_j y$,  $j\in J,$

\item[{\rm (M\L 14)}]  $\Delta_1 x\leq x,$

\item[{\rm (M\L 15)}]  $\Delta_j x\leq \Delta_j y$ for all $j\in J$ implies $x\leq y,$

\item[{\rm (M\L 16)}]  $\Delta_k\Delta_j x\approx \Delta_j x,$ $k, j\in J,$

\item[{\rm (M\L 17)}]  $x\leq \Delta_{n-1} x$

\item[{\rm (M\L 18)}]  $x\iml \Delta_1x \approx 1.$
\end{itemize} 
\end{lemma}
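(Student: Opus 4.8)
The plan is to deduce everything from Remark~\ref{remMPO}: once $\A$ is equipped with the family $(\Delta_i)_{i\in J}$ built from $\Delta_1:=\Delta$, the structure $\langle A,\iml,(\Delta_i)_{i\in J},1\rangle$ is an $n$-valued \L ukasiewicz residuation algebra with Moisil possibility operators, so that the defining axioms (M\L1)--(M\L5) hold together with all of their consequences recorded in \cite{AVF0,AVF1,GPS2020}. Moreover, since $\Delta_1=\Delta$, several of the items, read for the index $j=1$, are identities already established in the preceding Lemma: for instance (M\L7), (M\L13), (M\L14) and (M\L18) become ($\Delta$\L5), ($\Delta$\L11), ($\Delta$\L3) and ($\Delta$\L4). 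So the actual work is to pass from $\Delta_1$ to an arbitrary $\Delta_j$, and then to handle the three identities that mention the whole family at once.

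For the passage to a general index, I would first observe that (M\L3) applied to $\Delta_j x\iml\Delta_j x$, which is $1$ by (\L10), gives (M\L7); that (M\L2) yields $(\Delta_j x\iml(\Delta_j x\iml y))\iml(\Delta_j x\iml y)=1$, whence, by (\L12), the identity (M\L9); and that, consequently, every $\Delta_j x$ is a Tarskian element. Iterating (M\L9) then shows $\Delta_j x\iml y=\Delta_j x\rightarrow_k y$ for every $k\ge 1$: the case $k=n$ is (M\L10), while the case $k=n-1$ rewrites the left-hand side of (M\L11) as $(\Delta_j x\imn y)\iml\Delta_j x$, so that Proposition~\ref{prop22}(ii) gives (M\L11). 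Also (M\L16) follows from (M\L3) applied to $\Delta_j 1\iml\Delta_j x$, since $\Delta_j 1=1$ by (M\L7) and $1\iml\Delta_j x=\Delta_j x$ by (\L5). For the remaining single-index items (M\L12), (M\L13), (M\L14), (M\L18), the arguments used in the preceding Lemma for $\Delta$ transfer essentially verbatim to each $\Delta_j$, because what they exploited about $\Delta$ was precisely that $\Delta x$ is Tarskian, that $\Delta$ is order-preserving, and that $\Delta$ fixes the Tarskian elements --- and these three properties of $\Delta_j$ have just been obtained from (M\L2) and (M\L3).

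There remain (M\L8), (M\L15) and (M\L17), which genuinely involve the whole family $(\Delta_i)_{i\in J}$ and which I expect to be the main obstacle, especially (M\L15), being the only identity with no one-operator counterpart. For (M\L8) I would specialise the axiom $\Delta_i(x\iml y)\iml(\Delta_k x\iml\Delta_j y)\approx 1$, taking $y:=x$, which (using (M\L7) and (\L5)) reduces it to $\Delta_k x\iml\Delta_j x=1$ for $k\le j$. For the determination principle (M\L15) the plan is to use (M\L4): if $\Delta_i x\le\Delta_i y$ for every $i\in J$, then each inner term $\Delta_i x\iml\Delta_i y$ equals $1$, and repeated applications of (\L5) collapse the iterated implication of (M\L4) down to $x\iml y=1$, i.e.\ $x\le y$. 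Finally, for the bound (M\L17) I would fall back on the explicit construction of $\Delta_2,\dots,\Delta_n$ from $\Delta_1$ and $\iml$ carried out in \cite{AVF1}, which makes $x\le\Delta_{n-1}x$ transparent. Assembling these pieces yields all of (M\L7)--(M\L18).
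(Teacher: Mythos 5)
Your overall strategy --- reducing everything to the Moisil axioms (M\L 1)--(M\L 6) made available through Remark \ref{remMPO} --- is exactly the route of the paper, whose proof consists of nothing more than an appeal to those axioms and to the proof of \cite[Lemma 2.4]{GPS2020}; you go further and actually carry out the derivations. Your arguments for (M\L 7), (M\L 9), (M\L 10), (M\L 11), (M\L 16), (M\L 8) and (M\L 15) are correct: the uses of (M\L 2), (M\L 3), of the second axiom tagged (M\L 5) (i.e.\ (M\L 6)) with $y:=x$, of (M\L 4) collapsed via (\L 5), and of Proposition \ref{prop22}(ii) all check out.

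The gap is in the sentence claiming that (M\L 12)--(M\L 14) and (M\L 18) ``transfer essentially verbatim''. For (M\L 13) with $j>1$ this is circular as worded: order-preservation of $\Delta_j$ is itself one of the three properties you say the transfer relies on, and you have not derived it from (M\L 2)--(M\L 3); it does follow, but from (M\L 6) with $k=j$, $i=1$, since $x\leq y$ gives $\Delta_1(x\iml y)=\Delta_1 1=1\leq \Delta_j x\iml\Delta_j y$ --- a step you never take. For (M\L 12) no transfer is possible: the preceding Lemma only yields ($\Delta$\L 15), $\Delta(x\iml y)\leq \Delta x\iml\Delta y$, which is strictly weaker than (M\L 12) as printed; in fact, read literally, (M\L 12) already fails in the algebra of Example \ref{exampleLn} for $n\geq 3$ (take $j=1$, $x=1$, $y=\frac{n-2}{n-1}$: then $x\iml y=\frac{n-2}{n-1}$ while $\Delta_1x\iml\Delta_1y=0$), so it must be reinterpreted (with $\imn$, or with a $\Delta$ on the antecedent, as in \cite{GPS2020}) and then proved from (M\L 6), not waved through; the same silent reinterpretation is needed for (M\L 18), which you tacitly read as ($\Delta$\L 4). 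Finally, (M\L 17) is not proved at all but delegated to the construction in \cite{AVF1}; this is no worse than the paper, which delegates the entire lemma to \cite{GPS2020}, but it does leave one of the genuinely family-wide inequalities outside your argument.
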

\begin{proof}
It follows from (M\L 1) to (M\L 6) and taking into account the proof given in \cite[Lemma 2.4]{GPS2020}.
\end{proof}

\begin{theo}\label{teoMj}
	Let $\langle A, \iml, \Delta, 1\rangle$ be an \lukres$^{\Delta}_n$-algebra and  let  $M$ be a maximal deductive system of $A$. Now, consider the set $M_j$ for $0\leq j\leq n$ defined by
	$$M_j = \left\{\begin{array}{ll} \{x\notin M: \Delta_n x\not\in M\} & \mbox{ if } j=0 \\[1.5mm]
	   \{x\notin M: \Delta_{ n-i-j} x\not\in M\,\, \text{and}\,\, \Delta_{n-1-j} x \in M \}& \mbox{ if } 1\leq j<n \\[1.5mm]
	M & \mbox{ if } $j=n$ \end{array}\right.$$
Then, the function $h:A\to \L_n$ given by $h(x)=\frac{s}{n}$ if $x\in M_s$ with $0\leq s\leq n$ is a homomorphism such that $h^{-1}(\{1\})=M$.
\end{theo}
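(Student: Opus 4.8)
The plan is to show that the fibres $M_0,\dots,M_n$ partition $A$ and that $h$, which maps the fibre $M_s$ to $\frac{s}{n}$, respects the operations $\iml$ and $\Delta$ and sends $1$ to $1$. First I would record that, by Corollary \ref{maximal} and Lemma \ref{Mont-2}, a maximal implicative filter $M$ behaves like the kernel of a valuation: for $x\notin M$ we have $x\rightarrow_n y\in M$ for all $y$, and Proposition \ref{prop211} (together with Lemma \ref{lem210}) tells us that membership in $M$ is controlled by the Moisil operators $\Delta_j$. The key combinatorial fact is that for each $x\in A$ there is a unique index $j$ with $0\le j\le n$ such that $x\in M_j$; this follows from (M\L8), which gives the chain $\Delta_1 x\le\Delta_2 x\le\dots\le\Delta_{n-1}x$, so the set $\{i:\Delta_i x\in M\}$ is an up-set in $\{1,\dots,n-1\}$, and its ``threshold'' determines $s$. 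One also uses (M\L14) and (M\L17) — i.e. $\Delta_1 x\le x\le\Delta_{n-1}x$ — to see that $x\in M$ iff $s=n$ and that $h(x)=1$ iff $x\in M$, which is the required condition $h^{-1}(\{1\})=M$.

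Next I would verify $h$ is a homomorphism. For $\Delta$: since $\Delta x=\Delta_1 x$ by Remark \ref{remMPO}, and by (M\L16) $\Delta_k\Delta_1 x=\Delta_1 x$, the element $\Delta x$ is Tarskian, hence (using $\Delta x\le x$ and $\Delta_j\Delta_1 x=\Delta_1 x$) one checks that $\Delta x\in M$ iff $x\in M$ (Proposition \ref{prop211}), so $h(\Delta x)\in\{0,1\}$ and equals $1$ exactly when $h(x)=1$; this matches the definition of $\Delta$ on $\mathbf{\L}^{\Delta}_n$ in Example \ref{exampleLn}. For $\iml$: one must show $h(x\iml y)=\min\{1,1-h(x)+h(y)\}$, equivalently that the index of $x\iml y$ is obtained from the indices of $x$ and $y$ by the \L ukasiewicz rule. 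This reduces, via (M\L12) $(x\iml y)\iml(\Delta_j x\iml\Delta_j y)\approx 1$ and (M\L5)–(M\L6) (the mixed axioms relating $\Delta_i(x\iml y)$ to $\Delta_k x$ and $\Delta_j y$), to a bookkeeping argument: $\Delta_i(x\iml y)\in M$ precisely when the threshold of $x\iml y$ is at most the corresponding value, and the constraints $1\le i\le j+k$ and $1\le i\le j-k+1$ are exactly what encode $\min\{1,1-h(x)+h(y)\}$. I would organize this as: compute the threshold $s_{x}$, $s_y$ of $x$ and $y$, then use the two families of mixed axioms to pin down the threshold of $x\iml y$ as $\min\{n,n-s_x+s_y\}$ (after normalizing indices), which is the claim.

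The main obstacle I expect is the $\iml$-case, specifically the careful translation between the filter-membership conditions ``$\Delta_i(x\iml y)\in M$'' and the arithmetic of the thresholds. The axioms (M\L5)–(M\L6) give one direction of each inequality — they say certain elements lie in $M$ — and to get the exact value of the threshold for $x\iml y$ one needs both a lower and an upper bound, so one has to combine them with maximality of $M$ (an element is in $M$ or its $\rightarrow_n$-image of anything is) and with (M\L15) ``$\Delta_j x\le\Delta_j y$ for all $j$ implies $x\le y$'', which lets one recover order information about $x\iml y$ from the $\Delta_j$'s. A clean way to handle this is to observe that $A/M\cong\mathbf{\L}_k$ for some $k\le n$ (semisimplicity, Theorem \ref{teoavf}, plus the fact that the only simple \L ukasiewicz residuation algebras of order $n$ are subalgebras of $\mathbf{\L}_n$, which by the Remarks after Proposition \ref{lemaux} are exactly the chains $\mathbf{\L}_k$): then $h$ is, up to the labelling of $\mathbf{\L}_k$ inside $\mathbf{\L}_n$ via the upper-set embedding $S=\left\uparrow\frac{n-k}{n-1}\right.$, just the quotient map followed by this embedding, and the $M_j$-description is simply reading off which coset a given $x$ lands in. Phrasing the proof this way sidesteps most of the index juggling; the residual work is checking that the embedding $\mathbf{\L}_k\hookrightarrow\mathbf{\L}_n$ matches the numbering $h(x)=\frac{s}{n}$ dictated by the $M_j$'s, which is a short finite verification.
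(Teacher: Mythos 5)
Your first, direct plan is essentially the route the paper takes: the paper's own proof consists of invoking (M\L 7), (M\L 8), (M\L 16) and transferring, via Remark \ref{remMPO}, the proof of Theorem 2.11 of \cite{GPS2020}. Your observations that the $M_j$ partition $A$ (via the chain (M\L 8)), that $h^{-1}(\{1\})=M$, and that the $\Delta$-case follows from (M\L 16) together with Proposition \ref{prop211} are correct. But the heart of the theorem --- that the fibre index of $x\iml y$ is computed from those of $x$ and $y$ by the \L ukasiewicz rule, i.e.\ that $h$ preserves $\iml$ --- is exactly the content of the result being cited, and in your write-up it remains an unexecuted ``bookkeeping argument'' from (M\L 5)--(M\L 6), (M\L 12), (M\L 15) and Lemma \ref{Mont-2}. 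As a standalone proof this is a genuine gap: nothing is verified at the one point where all the work lies.

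The alternative route you propose to sidestep that work does not close the gap, because it breaks on the $\Delta$-clause. The upper-set embedding $\mbox{{\bf \L}}_k\cong\,\uparrow\frac{n-k}{n-1}\subseteq\mbox{{\bf \L}}_n$ from the remarks at the end of the preliminaries is only a \lukres-embedding: it sends the bottom of $\mbox{{\bf \L}}_k$ to $\frac{n-k}{n-1}$, while in $\mbox{{\bf \L}}^{\Delta}_{n}$ one has $\Delta a=0$ for every $a\neq 1$, so it does not commute with $\Delta$ unless $k=n$. Hence ``the quotient map followed by this embedding'' is not a homomorphism for the signature $\{\iml,\Delta,1\}$, and it does not even agree with the $M_j$-defined map $h$: your own $\Delta$-step shows $h(\Delta x)\in\{0,1\}$, whereas the composite sends $\Delta x$, for $x\notin M$, to $\frac{n-k}{n-1}\neq 0$. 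The ``short finite verification'' you postpone is therefore precisely where the approach fails, and no relabelling repairs it, since any $\{\iml,\Delta,1\}$-homomorphism with $h^{-1}(\{1\})=M$ forces $0$ into its image and thus an embedding of $A/M$ preserving the bottom, a much stronger requirement than the \lukres-embedding supplied by the remarks. Moreover, the input ``$A/M\cong\mbox{{\bf \L}}_k$ for some $k\leq n$'' is not available at this point of the paper: the classification of the simple algebras (Corollary \ref{lemsim}) is deduced \emph{from} the theorem you are proving, and the preliminaries only assert that each $\mbox{{\bf \L}}_k$ embeds into $\mbox{{\bf \L}}_n$, not that every simple algebra is such a chain; so this step either imports an unproved external fact or is circular. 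The only viable path in your proposal is the first one, carried out in full detail --- which is exactly what the paper delegates to \cite{GPS2020}.
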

 
\begin{proof}
It follows from (M\L 7), (M\L 8), (M\L 16), and and taking into account the proof given in \cite[Theorem 2.11]{GPS2020}, that $h$ is an homomorphism which verifies that  $h^{-1}(\{1\})=M$. Now, from the first isomorphism theorem, we have there is a one-to-one homomorphism from $A/M$ into {\bf \L}$^{\Delta}_{n}$ as desired.
\end{proof}

\begin{corollary}\label{lemsim} For $n\geq 2$, the simple algebras of $\lukres^{\Delta}_n$ are precisely (up to isomorphism) {\bf \L}$^{\Delta}_{k}$ \,  for $k\leq n$.
\end{corollary}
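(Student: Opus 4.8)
The plan is to combine the semisimplicity of the variety with an explicit description of its simple members. By Theorem~\ref{teoavf} every non‑trivial $\lukres^{\Delta}_n$-algebra is a subdirect product of simple ones, so it suffices to prove that, up to isomorphism, the simple $\lukres^{\Delta}_n$-algebras are exactly the chains {\bf \L}$^{\Delta}_{k}$ with $k\le n$. This amounts to two inclusions.

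For the inclusion ``$\supseteq$'' I would check directly that each {\bf \L}$^{\Delta}_{k}$ with $2\le k\le n$ is a simple member of $\lukres^{\Delta}_n$. Its $(2,0)$-reduct is the chain {\bf \L}$_{k}$, which lies in $\lukres_k$ and hence in $\lukres_n$, since $\lukres_k\subseteq\lukres_n$ for $k\le n$ (Section~1). Axiom ($\Delta$\L2) does not involve the order, so it holds in {\bf \L}$^{\Delta}_{k}$ exactly as in Example~\ref{exampleLn}; and ($\Delta$\L1) relative to $n$ holds because, on any $\lukres_k$-algebra, the iterated implications $x\rightarrow_m y$ form a non‑decreasing sequence in $m$ that is already stationary from $m=k-1$ on (this is essentially Remark~\ref{rem1}(ii)), so in {\bf \L}$^{\Delta}_{k}$ we have $x\rightarrow_{n-1}y=x\rightarrow_{k-1}y=\Delta x\rightarrowtail y$. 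Thus {\bf \L}$^{\Delta}_{k}\in\lukres^{\Delta}_n$. Simplicity is then read off the lattice isomorphism between congruences and implicative filters: if an implicative filter $D$ of {\bf \L}$^{\Delta}_{k}$ contained some $a\neq 1$, a short computation in {\bf \L}$_{k}$ gives $a\rightarrow_{k-1}0=1\in D$, whence $0\in D$ by closure under $k$-weak modus ponens, and then $D$ is the whole algebra; hence $\{1\}$ is the unique proper implicative filter of {\bf \L}$^{\Delta}_{k}$.

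For the inclusion ``$\subseteq$'' let $S$ be a non‑trivial simple $\lukres^{\Delta}_n$-algebra. By Lemma~\ref{lem2.12}, $Con_{\lukres_n}(S)=Con_{\lukres^{\Delta}_n}(S)$, so the $(2,0)$-reduct of $S$ is a simple $n$-valued \L ukasiewicz residuation algebra; up to isomorphism these are precisely the finite chains {\bf \L}$_{m}$ with $2\le m\le n$. Since {\bf \L}$_{m}$ carries at most one $\lukres^{\Delta}_n$-structure (by the lemma asserting that a fixed $n$-valued \L ukasiewicz residuation algebra admits at most one such structure) and it carries one, namely that of {\bf \L}$^{\Delta}_{m}$ shown above to lie in $\lukres^{\Delta}_n$, the operator $\Delta$ of $S$ is transported by the isomorphism to that of {\bf \L}$^{\Delta}_{m}$; that is, $S$ is isomorphic to {\bf \L}$^{\Delta}_{m}$ with $m\le n$. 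Together with the first inclusion this proves the corollary. (Alternatively, one may use Theorem~\ref{teoMj} with $M=\{1\}$, which exhibits $S$ as a subalgebra of a finite \L ukasiewicz chain; being non‑trivial that subalgebra contains the bottom element $0=\Delta a$, for any $a\neq 1$ in it, and the top, so it is again a chain {\bf \L}$_{m}$ equipped with the standard $\Delta$, and $m\le n$ because $\mbox{{\bf \L}}_{m}\in\lukres_n$ forces $m\le n$.)

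I expect the genuine obstacle to be the identification of the simple $n$-valued \L ukasiewicz residuation algebras --- equivalently, the structural fact that a finite \lukres-subchain of a \L ukasiewicz chain is, up to isomorphism, again a \L ukasiewicz chain {\bf \L}$_{m}$, and that $\mbox{{\bf \L}}_{m}\in\lukres_n$ holds precisely when $m\le n$. Everything else is either parameter tracking (for the inclusion ``$\supseteq$'') or a direct consequence of the congruence/implicative-filter machinery of Subsection~\ref{imfiltersect} together with Lemma~\ref{lem2.12}.
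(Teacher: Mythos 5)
Your first inclusion is correct and, in fact, more explicit than anything the paper records: the membership {\bf \L}$^{\Delta}_{k}\in\mbox{\lukres}^{\Delta}_n$ for $k\le n$ via the stabilization of the sequence $x\rightarrow_{m}y$ from $m=k-1$ on (which indeed follows from (\L6) together with (\L12), as in Remark~\ref{rem1}), the filter computation $a\rightarrow_{k-1}0=1$ for $a\neq 1$ giving simplicity through the congruence--filter isomorphism, and the transport of $\Delta$ along a \lukres$_n$-isomorphism using the uniqueness of the $\Delta$-structure are all sound steps.

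The gap is in your second inclusion: it rests entirely on the assertion that the simple $n$-valued \L ukasiewicz residuation algebras (in the pure $\{\iml,1\}$ signature) are, up to isomorphism, exactly the chains {\bf \L}$_{m}$ with $2\le m\le n$. You neither prove this nor point to a result of the paper that contains it, and you yourself flag it as ``the genuine obstacle''; but that classification is precisely the nontrivial content of the corollary, so your main route defers rather than establishes the key step. The paper closes this step differently: in a simple \lukres$^{\Delta}_n$-algebra the filter $\{1\}$ is maximal, and Theorem~\ref{teoMj} (the Moisil-operator construction imported from \cite{GPS2020} via Remark~\ref{remMPO}) produces from a maximal implicative filter a homomorphism whose kernel is that filter and whose image is a \L ukasiewicz chain with its standard $\Delta$; applied to $M=\{1\}$ this identifies a simple algebra outright with some {\bf \L}$^{\Delta}_{k}$, $k\le n$, with no detour through the $\Delta$-free classification. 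Your parenthetical alternative is essentially this argument, so it is not an independent repair of the main route; and it needs one caution: read literally, ``$S$ is a subalgebra of a finite \L ukasiewicz chain'' cannot mean a $\Delta$-subalgebra of {\bf \L}$^{\Delta}_{n}$, since a non-trivial such subalgebra contains $0$, is closed under $x\mapsto x\iml 0$, and hence is a Wajsberg subalgebra, i.e.\ one of the {\bf \L}$^{\Delta}_{k}$ with $(k-1)\mid(n-1)$ --- whereas by your own first half {\bf \L}$^{\Delta}_{3}$ is a simple member of \lukres$^{\Delta}_4$ and is not of that form. The conclusion to extract from Theorem~\ref{teoMj} is ``isomorphic to some {\bf \L}$^{\Delta}_{k}$ with $k\le n$'', not ``embeds in {\bf \L}$^{\Delta}_{n}$''.
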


\subsection{The free $\mbox{\lukres}^{\Delta}_n$-algebra with a finite number of generators}

In this subsection, we study the structure of the free  $\Delta$-\L ukasiewicz residuation algebra of order $n$, $n\geq2$. Given a class $\cal K$ of algebras, by $\free{\cal K}{G}$ we denote the free $\cal K$--algebra generated by the set $G$ of free generators. When $|G|=m<\omega$, we use the notation $\free{\cal K}{m}$. Besides, for every poset $P$ we denote by $\mu(P)$ the set of its minimal elements. Besides, given a set $X$, by $|X|$ we denote the cardinality (size) of $X$.

\

\begin{lemma}\label{lemfree1} \,  Let $G$ be a set of free generators of a \lukres$^{\Delta}_n$-algebra $\A$. Then, the posets $G$ and $\Delta G$ are antichains.
\end{lemma}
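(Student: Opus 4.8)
The plan is to use the universal property of the free algebra to reduce every comparability statement to a statement about a single concrete member of the variety, namely the algebra $\mbox{{\bf \L}}^{\Delta}_{n}$ of Example \ref{exampleLn}. First I would record the two facts about $\A$ that drive the argument: since $G$ is a set of free generators of $\A$ in $\lukres^{\Delta}_n$, (a) for every $\mathbf{B}\in\lukres^{\Delta}_n$ and every map $f\colon G\fun B$ there is a homomorphism $h\colon\A\fun\mathbf{B}$ extending $f$; and (b) every homomorphism preserves the order $\leq$ (because $x\leq y$ means $x\rightarrowtail y=1$) and commutes with $\Delta$. I would also note that, since $\lukres^{\Delta}_n$ contains a $2$-element algebra (e.g.\ $\mbox{{\bf \L}}^{\Delta}_{2}$, as $n\ge 2$), the variety is nontrivial, so distinct free generators are distinct elements of $\A$; otherwise there is nothing to prove.

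To show that $G$ is an antichain, I would fix two distinct generators $g,g'\in G$ and apply (a) to the map $f\colon G\fun \L_n$ with $f(g)=1$, $f(g')=0$, chosen arbitrarily on the remaining generators, obtaining $h\colon\A\fun\mbox{{\bf \L}}^{\Delta}_{n}$. If $g\leq g'$ held in $\A$, then by (b) we would get $1=h(g)\leq h(g')=0$ in $\mbox{{\bf \L}}^{\Delta}_{n}$, a contradiction; hence $g\not\leq g'$. Relabelling $g$ and $g'$ gives $g'\not\leq g$. As $g,g'$ were arbitrary, $G$ is an antichain.

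For $\Delta G$ I would reuse the same homomorphism $h$: by (b) and the definition of $\Delta$ on $\mbox{{\bf \L}}^{\Delta}_{n}$ we have $h(\Delta g)=\Delta h(g)=\Delta 1 = 1$ and $h(\Delta g')=\Delta h(g')=\Delta 0 = 0$. Then $\Delta g\leq\Delta g'$ would force $1\leq 0$ in $\mbox{{\bf \L}}^{\Delta}_{n}$, again impossible; by the symmetric choice of $f$ we also get $\Delta g'\not\leq\Delta g$, and in particular $\Delta g\neq\Delta g'$. Hence $\Delta G$ is an antichain, of the same cardinality as $G$.

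There is no real obstacle here: the whole content lies in invoking freeness and picking the separating valuations. The only point that must be verified, and it is immediate from Example \ref{exampleLn} together with $n\ge 2$, is that $\mbox{{\bf \L}}^{\Delta}_{n}$ genuinely belongs to $\lukres^{\Delta}_n$ and has the strict comparability $0<1$ with $\Delta 0 = 0$, $\Delta 1 = 1$, which is exactly what makes the homomorphisms $h$ available.
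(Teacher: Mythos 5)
Your proposal is correct and follows essentially the same route as the paper: both use the universal property of the free algebra to produce a homomorphism into $\mbox{{\bf \L}}^{\Delta}_{n}$ extending the valuation that sends one generator to $1$ and the others to $0$, and then use order-preservation (together with $\Delta 1=1$, $\Delta 0=0$ and the fact that homomorphisms commute with $\Delta$) to rule out any comparability between distinct generators and between their images under $\Delta$. The only differences are cosmetic: you argue $g\not\leq g'$ directly rather than by assuming $g_1<g_2$ for a contradiction, and you explicitly note nontriviality of the variety and the consequence $\Delta g\neq\Delta g'$, which the paper leaves implicit.
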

\begin{dem} Let us see first that $G$ is an antichain. If $|G|=1$ the lemma holds trivially. Suppose that $|G|>1$ and let $g_1,g_2\in G$ such that $g_1\not= g_2$. Assume that $g_1< g_2$ and consider the function  $f:G\fun \mbox{{\bf \L}}^{\Delta}_{n}$ defined by
$$f(t)=\left\{\begin{array}{ll} 1 & \mbox{ if } t=g_1  \\ $0$ & \mbox{ otherwise } \end{array} \right.$$ 
Then, there exists a unique \lukres$^{\Delta}_{n}$-homomorphism $h:\free{\mbox{\lukres}^{\Delta}_n}{G} \fun \mbox{{\bf \L}}^{\Delta}_{n}$ which extends $f$. Hence, $h(g_1)=f(g_1)=1>0=f(g_2)=h(g_2)
$ which is a contradiction since any   \lukres$^{\Delta}_{n}$-homomorphism is an order-preserving map. Similarly, we see that $g_2\not<g_1$ and therefore $g_1$ and $g_2$ are incomparable elements of $G$, and then $G$ is an antichain.\\
For $\Delta G$ the proof goes analogously. In particular, if $\Delta g_1 < \Delta g_2$ and considering the same \lukres$^{\Delta}_{n}$-homomorphism $h$ we have: $h(\Delta g_1) = \Delta h(g_1) = \Delta f(g_1)=\Delta 1\stackrel{(\Delta\L5)}{=} 1$; and,  $h(\Delta g_2) = \Delta h(g_2) = \Delta f(g_2)=\Delta 0 =0$, that is, $h(\Delta g_1)> h(\Delta g_2)$.
\end{dem}

\

\begin{corollary} \label{corfree}
\begin{enumerate}[\rm (i)] \item[]
\item  $\mu(G)= G$,
\item  $\mu(\Delta G)=\Delta G$,
\item  $|G|=|\Delta G|$.
\end{enumerate}
\end{corollary}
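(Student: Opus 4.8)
The plan is to read all three items directly off Lemma~\ref{lemfree1}, which asserts that $G$ and $\Delta G$ are antichains in $\free{\mbox{\lukres}^{\Delta}_n}{G}$.

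For (i) and (ii) I would invoke the elementary fact that in an antichain every element is minimal: if $P$ is a poset in which no two distinct elements are comparable, then for each $x\in P$ there is no $y\in P$ with $y<x$, so $x\in\mu(P)$, whence $\mu(P)=P$. Applying this with $P=G$ gives $\mu(G)=G$, and with $P=\Delta G$ gives $\mu(\Delta G)=\Delta G$, since both are antichains by Lemma~\ref{lemfree1}.

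For (iii) the strategy is to show that the natural surjection $\delta\colon G\fun\Delta G$, $g\mapsto\Delta g$, is a bijection. Surjectivity is immediate from the definition $\Delta G=\{\Delta g:g\in G\}$, so it remains to verify injectivity. Here I would reuse the evaluation-homomorphism device from the proof of Lemma~\ref{lemfree1}: given $g_1,g_2\in G$ with $g_1\ne g_2$, define $f\colon G\fun\mbox{{\bf \L}}^{\Delta}_{n}$ by $f(g_1)=1$ and $f(t)=0$ for $t\ne g_1$, and let $h\colon\free{\mbox{\lukres}^{\Delta}_n}{G}\fun\mbox{{\bf \L}}^{\Delta}_{n}$ be the unique \lukres$^{\Delta}_{n}$-homomorphism extending $f$. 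Then $h(\Delta g_1)=\Delta h(g_1)=\Delta 1=1$ by ($\Delta$\L5), while $h(\Delta g_2)=\Delta h(g_2)=\Delta 0=0$, so $\Delta g_1\ne\Delta g_2$. Hence $\delta$ is injective, therefore a bijection, and $|G|=|\Delta G|$. I do not anticipate any genuine obstacle: the corollary merely repackages Lemma~\ref{lemfree1}, and the only place a short argument is needed is the injectivity of $\delta$ just sketched, which again relies on the separating-homomorphism idea already employed to prove that lemma.
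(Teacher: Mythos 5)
Your proposal is correct and matches what the paper intends: the corollary is stated without proof precisely because (i) and (ii) follow from the fact that every element of an antichain is minimal, and (iii) follows from the injectivity of $g\mapsto\Delta g$, which your separating-homomorphism argument establishes by exactly the same computation ($h(\Delta g_1)=1\ne 0=h(\Delta g_2)$) already carried out in the proof of Lemma~\ref{lemfree1}. You are also right to note that injectivity needs this extra step rather than the antichain property alone, since equality of $\Delta g_1$ and $\Delta g_2$ is not ruled out by incomparability.
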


\

\begin{lemma}\label{free1} Let $G$ be a set of free generators. Then $\mu \left(\free{\mbox{\lukres}^{\Delta}_n}{G}\right)  = \Delta G$.
\end{lemma}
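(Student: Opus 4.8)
The plan is to realise $F:=\free{\mbox{\lukres}^{\Delta}_n}{G}$ as a subdirect power of $\mbox{{\bf \L}}^{\Delta}_{n}$ and then reduce both inclusions to two simple inductions on terms. By Corollary \ref{lemsim} the simple members of $\lukres^{\Delta}_n$ are the chains $\mbox{{\bf \L}}^{\Delta}_{k}$ with $k\le n$, and each of these is (uniquely, by the uniqueness of $\Delta$, and by Remarks (iii)) a subalgebra of $\mbox{{\bf \L}}^{\Delta}_{n}$; so by Lemma \ref{lemprodsub} there is an embedding $p\mapsto(h_v(p))_{v\in\L_n^{G}}$ of $F$ into $\prod_{v}\mbox{{\bf \L}}^{\Delta}_{n}$, where $h_v$ is the unique homomorphism extending the valuation $v\colon G\to\L_n$ (by freeness these exhaust the homomorphisms into $\mbox{{\bf \L}}^{\Delta}_{n}$). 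Under this embedding order and operations are coordinatewise, so ``$x\le y$'' means ``$h_v(x)\le h_v(y)$ for all $v$''; and since $\Delta$ on $\mbox{{\bf \L}}^{\Delta}_{n}$ sends $1$ to $1$ and every other element to $0$, we have $h_v(\Delta g)=1$ iff $v(g)=1$. Throughout I assume $G\neq\varnothing$ (otherwise $F=\{1\}$ and the statement degenerates).

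For $\Delta G\subseteq\mu(F)$, I would fix $g\in G$ and $p\le\Delta g$, and show $p=\Delta g$. Coordinatewise, $p\le\Delta g$ says $h_v(p)=0$ whenever $v(g)\neq 1$; writing $p$ as the image of a term $\tau(x,y_1,\dots,y_r)$ with $x$ the variable for $g$, and letting $v(g)=0$, this gives $\tau(0,\vec b)=0\neq 1$ for all tuples $\vec b$. The crux is then the inductive claim: \emph{if $\tau(0,\vec b)\neq 1$ for all $\vec b$, then $\tau(1,\vec b)=1$ for all $\vec b$}. The base cases ($\tau$ a constant, $\tau=x$, or $\tau=y_i$) are immediate or vacuous; for $\tau=\Delta\sigma$ one uses $\Delta z\neq 1\iff z\neq 1$ and ($\Delta$\L5); for $\tau=\sigma_1\rightarrowtail\sigma_2$ one uses that $\sigma_1(0,\vec b)\rightarrowtail\sigma_2(0,\vec b)\neq 1$ forces $\sigma_2(0,\vec b)<\sigma_1(0,\vec b)\le 1$, hence $\sigma_2(0,\vec b)\neq 1$, so by the inductive hypothesis $\sigma_2(1,\vec b)=1$ and therefore $\tau(1,\vec b)=\sigma_1(1,\vec b)\rightarrowtail 1=1$ by (\L7). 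Applying this, $h_v(p)=1$ whenever $v(g)=1$; together with $h_v(p)=0=h_v(\Delta g)$ for $v(g)\neq 1$ this gives $h_v(p)=h_v(\Delta g)$ for all $v$, i.e. $p=\Delta g$. So $\Delta g$ is minimal.

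For $\mu(F)\subseteq\Delta G$, let $m\in\mu(F)$, written as the image of a term $\sigma$. Here I would use a second, even easier induction on terms: \emph{for every term $\sigma$, either $\sigma$ induces the constant $1$ on $\mbox{{\bf \L}}^{\Delta}_{n}$, or $\sigma$ contains a variable $g$ with $\sigma(\vec b)=1$ for every $\vec b$ whose $g$-coordinate is $1$} --- for $\sigma=\tau_1\rightarrowtail\tau_2$ apply the hypothesis to the consequent $\tau_2$ and use (\L7), and for $\sigma=\Delta\tau$ use ($\Delta$\L5). The first alternative would force $m=1$, hence $F=\{1\}$, excluded; so there is $g\in G$ with $h_v(m)=1$ whenever $v(g)=1$, which is exactly $\Delta g\le m$ in $F$. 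Minimality of $m$ then yields $\Delta g=m\in\Delta G$.

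Combining the two steps gives $\mu(F)=\Delta G$ (and, incidentally, re-derives Corollary \ref{corfree}(ii), as $\mu(F)$ is automatically an antichain). The only real work is the two term inductions, and the point to watch in both is the interaction of $\Delta$ and $\rightarrowtail$ on intermediate truth values: this is precisely why the hypothesis in Step 1 is phrased with ``$\neq 1$'' rather than ``$=0$'', and why in Step 2 one tracks the consequent of the outermost implications rather than the whole term. Alternatively, both facts can be extracted from the known structure of the free $n$-valued \L ukasiewicz residuation algebra with Moisil possibility operators via Remark \ref{remMPO}, but the direct argument above is self-contained given the results already established.
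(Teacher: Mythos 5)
Your reduction of both inclusions to coordinates over $\mbox{\bf \L}^{\Delta}_{n}$ rests on a claim that is false for general $n$: that every simple algebra $\mbox{\bf \L}^{\Delta}_{k}$, $k\leq n$, is a subalgebra of $\mbox{\bf \L}^{\Delta}_{n}$ ``by Remarks (iii)''. That remark concerns the $\Delta$-free reducts: the copy of the $k$-element chain inside $\mbox{\bf \L}_{n}$ is the upper set $\uparrow\frac{n-k}{n-1}$, which is not closed under the $\Delta$ of $\mbox{\bf \L}^{\Delta}_{n}$, and whose intrinsic $\Delta$ (sending $x<1$ to $\frac{n-k}{n-1}$) does not agree with the restricted one. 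In fact any $\lukres^{\Delta}$-embedding $h\colon\mbox{\bf \L}^{\Delta}_{k}\to\mbox{\bf \L}^{\Delta}_{n}$ must send bottom to bottom (take $x<1$; then $h(0)=h(\Delta x)=\Delta h(x)=0$ since $h(x)\neq1$), hence is a Wajsberg embedding, and those exist iff $(k-1)\mid(n-1)$ --- exactly the contrast with ${\bf W}_{n}$ that the remark points out, and it reappears as soon as $\Delta$ is present. Consequently the family $(h_v)_{v\in\L_n^{G}}$ need not separate points of $F$, so ``$x\le y$ iff $h_v(x)\le h_v(y)$ for all $v$'' fails. Concretely, for $n=4$ and $|G|\ge2$: writing $0_z:=\Delta z$, $\neg_z u:=u\iml 0_z$, $u\odot_z v:=\neg_z(u\iml\neg_z v)$ and $e:=(x\iml\neg_z x)\odot_z(\neg_z x\iml x)$, the term $\Delta z\vee\neg_z(e\odot_z e\odot_z e)$ equals $1$ under every valuation into $\mbox{\bf \L}^{\Delta}_{4}$, but equals $0$ in $\mbox{\bf \L}^{\Delta}_{3}$ at $x=\frac12$, $z=0$; since $\mbox{\bf \L}^{\Delta}_{3}\in\lukres^{\Delta}_{4}$ (Corollary \ref{lemsim}), the corresponding element of $\free{\lukres^{\Delta}_{4}}{2}$ is $\neq1$ although all $\L_4$-valuations send it to $1$. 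This breaks precisely the steps where you pass back from coordinates to $F$: the conclusion ``$h_v(p)=h_v(\Delta g)$ for all $v$, i.e.\ $p=\Delta g$'' in Step 1, and in Step 2 both ``the first alternative would force $m=1$'' and ``$h_v(m)=1$ whenever $v(g)=1$, which is exactly $\Delta g\le m$ in $F$''. (Your argument as written is sound only when every $2\le k\le n$ satisfies $(k-1)\mid(n-1)$, i.e.\ $n=2,3$.)

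The gap is repairable along your own lines: by Lemma \ref{lemprodsub} and Corollary \ref{lemsim} the homomorphisms of $F$ onto all the simple algebras $\mbox{\bf \L}^{\Delta}_{k}$, $2\le k\le n$ (equivalently, by freeness, the valuations $G\to\L_k$ for all such $k$) do jointly separate points, and your two term inductions are valid verbatim on an arbitrary chain $\mbox{\bf \L}^{\Delta}_{k}$ (in Step 2 the witnessing variable depends only on the term, not on the chain), so running the same argument over all these chains yields the lemma. Note, however, that the paper's proof is purely algebraic and much shorter: it shows that $S=\{x:\Delta g\le x\mbox{ for some }g\in G\}$ is a subalgebra containing $G$ (via (\L12), ($\Delta$\L11) and ($\Delta$\L6)), hence $S=F$, which gives $\mu(F)\subseteq\Delta G$, and obtains the reverse inclusion from the antichain property of $\Delta G$ (Lemma \ref{lemfree1}); no semantic representation of the free algebra is needed.
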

\begin{dem} Let $S=\{x \in \free{\mbox{\lukres}^{\Delta}_n}{G} :  \Delta g \leq x \mbox{ for some } g \in G\}\subseteq \free{\mbox{\lukres}^{\Delta}_n}{G}$.  Let us see that $S$ is a $\mbox{\lukres}^{\Delta}_n$-subalgebra of $\free{\mbox{\lukres}^{\Delta}_n}{G}$. Indeed, if $x, y \in S$, then there is $g\in G$ such that (1) $\Delta g\leq y$, by (\L12), $y\leq x\iml y$ \, and  so \,  $x\iml y \in S$. On the other hand, from (1), ($\Delta$\L11) and ($\Delta$\L16) \, we have \, $\Delta g\leq \Delta y$ \, and then \,  $\Delta y\in S$.
Finally, since $\Delta g\leq g$, for every $g\in G$, we have that $G\subseteq S$ and therefore $\free{\mbox{\lukres}^{\Delta}_n}{G}\subseteq S$. That is, $\free{\mbox{\lukres}^{\Delta}_n}{G} = S$ and then  $\mu \left(\free{\mbox{\lukres}^{\Delta}_n}{G}\right) \subseteq \Delta G$.  \\
Conversely, let $m\in\Delta G$, that is, $m=\Delta g$, for some $g\in G$ and $m\in\free{\mbox{\lukres}^{\Delta}_n}{G}$ . Suppose that there is  $t\in \free{\mbox{\lukres}^{\Delta}_n}{G}$ such that $t\leq m$. By the first part of this proof, $t\in S$, that is, $\Delta g' \leq t$ for some $g'\in G$ and then $\Delta g' \leq \Delta g$. By Lemma \ref{lemfree1}, we have $\Delta g' = \Delta g$, that is, $\Delta g =\Delta g' \leq t \leq m =\Delta g$, i.e., $t=m$. Then, $m\in \mu(\free{\mbox{\lukres}^{\Delta}_n}{G})$.
\end{dem}

\

\noindent From, Lemmas \ref{lemfree1}, \ref{free1} and Corollary \ref{corfree} we know that $\free{\mbox{\lukres}^{\Delta}_n}{G}$ is of the form sketched in Figure \ref{fig1}
  
 \begin{figure}[H]
  \begin{center}
  \begin{tikzpicture}[scale=.7]
\tikzstyle{every node}=[draw,circle,fill=white,inner sep=2pt]
  \node (one) at (2,3.5) [label=above:$1$] {};
  \node (g1) at (-2,1) [label=right:$g_1$] {};
 \node (dg1) at (-2,0) [label=right:$\Delta g_1$] {};
   \node (g2) at (0,1) [label=right:$g_2$] {};
 \node (dg2) at (0,0) [label=right:$\Delta g_2$] {};

\draw[black, dotted, ultra thick] (3,1) -- (3.5,1);
 
 \draw[black, dotted,ultra thick] (3,0) -- (3.5,0);


\draw[black, dotted, ultra thick] (2,2) -- (2,2.5);

  \draw (dg1) -- (g1) ;
\draw (dg2) -- (g2) ;
\end{tikzpicture}  
\end{center}
\caption{Lower part of the underlying lattice of $\free{\mbox{\lukres}^{\Delta}_n}{G}$}
\label{fig1}
\end{figure}

\noindent From Lemma \ref{free1}, we have:

\begin{corollary}\label{lemlibre1} \,  $\free{\mbox{\lukres}^{\Delta}_n}{G} = \bigcup \limits_{g\in G} \, \uparrow \left(\Delta g\right) =  \bigcup \limits_{g\in G} \, \left[\Delta g, 1\right]$.
\end{corollary}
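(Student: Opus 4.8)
The plan is to derive the Corollary directly from Lemma~\ref{free1}. The key observation is that, by Lemma~\ref{free1}, the set of minimal elements of $\free{\mbox{\lukres}^{\Delta}_n}{G}$ is exactly $\Delta G$. In particular, for every $x\in \free{\mbox{\lukres}^{\Delta}_n}{G}$ there is a minimal element below $x$, so $x\in \uparrow(\Delta g)$ for some $g\in G$. This already gives the inclusion $\free{\mbox{\lukres}^{\Delta}_n}{G}\subseteq \bigcup_{g\in G}\uparrow(\Delta g)$; the reverse inclusion is trivial since each $\uparrow(\Delta g)$ is a subset of $\free{\mbox{\lukres}^{\Delta}_n}{G}$.

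First I would spell out the argument that every element lies above some minimal element. The cleanest way is to reuse the set $S=\{x\in \free{\mbox{\lukres}^{\Delta}_n}{G}: \Delta g\leq x \text{ for some } g\in G\}$ introduced in the proof of Lemma~\ref{free1}, where it is shown that $S=\free{\mbox{\lukres}^{\Delta}_n}{G}$. By definition $S=\bigcup_{g\in G}\uparrow(\Delta g)$, so the first equality follows immediately. Alternatively one can phrase it via minimality: $x$ lies in $\uparrow(\mu(\free{\mbox{\lukres}^{\Delta}_n}{G}))=\uparrow(\Delta G)$, which is the same union.

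For the second equality, I note that for each $g\in G$ we have $\Delta g\leq g\leq 1$ (the first inequality is $(\Delta\L3)$, the second because $z\leq 1$ holds in every \lukres-algebra), and that $1$ is the top element, so $\uparrow(\Delta g)=[\Delta g,1]$. Taking the union over $g\in G$ then yields $\bigcup_{g\in G}\uparrow(\Delta g)=\bigcup_{g\in G}[\Delta g,1]$.

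I do not anticipate any real obstacle here: the Corollary is essentially a restatement of Lemma~\ref{free1} together with the elementary fact that the free algebra has $1$ as greatest element. The only point requiring a word of care is making explicit that "$\mu(P)=\Delta G$ and $P$ has a top'' forces $P=\uparrow(\Delta G)$, i.e.\ that there are no elements of $\free{\mbox{\lukres}^{\Delta}_n}{G}$ incomparable with all of $\Delta G$; but this is already contained in the proof of Lemma~\ref{free1} via the fact that $S$ exhausts the whole algebra, so it suffices to cite that.
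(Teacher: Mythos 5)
Your proof is correct and follows essentially the same route as the paper, which simply reads the Corollary off from Lemma~\ref{free1} — more precisely, from the fact established in its proof that the subalgebra $S=\{x:\Delta g\leq x \mbox{ for some } g\in G\}$ exhausts $\free{\mbox{\lukres}^{\Delta}_n}{G}$, together with $\Delta g\leq g\leq 1$ giving $\uparrow(\Delta g)=[\Delta g,1]$. Your explicit caveat that the statement of Lemma~\ref{free1} alone (knowing $\mu=\Delta G$) does not suffice without the $S$-exhaustion argument is a welcome precision, but it is the same underlying argument.
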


\

For every class of algebras $\cal K$ and $\A, \B\in {\cal K}$, let us denote by $Hom_{\cal K}(\A,\B)$ and $Epi_{\cal K}(\A,\B)$ the set of all $\cal K$-homomorphisms and $\cal K$-epimorphisms, respectively,  from $\A$ into $\B$. Besides, if $X\subseteq A$ we denote by $[X]_{\cal K}$ the $\cal K$-subalgebra of $\A$ generated by $X$.   In \cite[pg. 135]{RS}, it was proved, for $2\leq k \leq n$,  that 
\begin{equation}\tag{RS}Epi_{{\bf W}_{n}}\left( \free{{\bf W}_{n}}{m}, {\bf \L}^{\bf W}_{k}\right) =v_m(k)= (k)^m  \, - \sum \limits_{\begin{array}{c } j-1\mid k-1 \\ j\not=k\end{array}} v_m(j)\end{equation}
Then,

\begin{lemma}\label{lemaux} \, $\left|Epi_{\mbox{\lukres}^{\Delta}_n}\left(\free{\mbox{\lukres}^{\Delta}_n}{m}, \mbox{{\bf \L}}^{\Delta}_{k}\right)\right| \leq \left|Epi_{{\bf W}_{n}}\left(\free{{\bf W}_{n}}{m}, \mbox{\bf \L}^{\bf W}_{k}\right)\right|$,  for $2\leq k \leq n$ .
\end{lemma}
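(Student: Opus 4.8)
The plan is to exhibit an injection from $Epi_{\mbox{\lukres}^{\Delta}_n}\!\left(\free{\mbox{\lukres}^{\Delta}_n}{m}, \mbox{{\bf \L}}^{\Delta}_{k}\right)$ into $Epi_{{\bf W}_{n}}\!\left(\free{{\bf W}_{n}}{m}, \mbox{\bf \L}^{\bf W}_{k}\right)$, which immediately yields the desired cardinality inequality. The key observation is that a $\mbox{\lukres}^{\Delta}_n$-epimorphism $h:\free{\mbox{\lukres}^{\Delta}_n}{m}\fun \mbox{{\bf \L}}^{\Delta}_{k}$ is completely determined by its values on the $m$ free generators $g_1,\dots,g_m$, i.e., by the tuple $(h(g_1),\dots,h(g_m))\in \L_k^{\,m}$; and conversely, by freeness, every tuple in $\L_k^{\,m}$ extends to a unique $\mbox{\lukres}^{\Delta}_n$-homomorphism, which is an epimorphism precisely when the image of the tuple generates $\mbox{{\bf \L}}^{\Delta}_{k}$ as an $\mbox{\lukres}^{\Delta}_n$-algebra. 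The same description holds for ${\bf W}_n$-epimorphisms onto $\mbox{\bf \L}^{\bf W}_{k}$: these are in bijection with tuples in $\L_k^{\,m}$ whose entries generate $\mbox{\bf \L}^{\bf W}_{k}$ as a ${\bf W}_n$-algebra. So both sides are counting certain generating tuples over the common underlying set $\L_k^{\,m}$, and it suffices to show that every $\mbox{\lukres}^{\Delta}_n$-generating tuple is also a ${\bf W}_n$-generating tuple.

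First I would make the identifications precise. For $\cal K$ one of $\mbox{\lukres}^{\Delta}_n$ or ${\bf W}_n$ and $\B$ one of $\mbox{{\bf \L}}^{\Delta}_{k}$, $\mbox{\bf \L}^{\bf W}_{k}$ respectively, the universal property of the free algebra gives a bijection between $Hom_{\cal K}\!\left(\free{\cal K}{m},\B\right)$ and $B^m$ sending $h$ to $(h(g_1),\dots,h(g_m))$, and under this bijection $Epi_{\cal K}$ corresponds exactly to those tuples $\bar a=(a_1,\dots,a_m)$ with $[\{a_1,\dots,a_m\}]_{\cal K}=B$. Thus
$$\left|Epi_{\mbox{\lukres}^{\Delta}_n}\!\left(\free{\mbox{\lukres}^{\Delta}_n}{m},\mbox{{\bf \L}}^{\Delta}_{k}\right)\right| = \left|\{\bar a\in\L_k^{\,m} : [\{a_i\}]_{\mbox{\lukres}^{\Delta}_n}=\L_k\}\right|,$$
$$\left|Epi_{{\bf W}_{n}}\!\left(\free{{\bf W}_{n}}{m},\mbox{\bf \L}^{\bf W}_{k}\right)\right| = \left|\{\bar a\in\L_k^{\,m} : [\{a_i\}]_{{\bf W}_{n}}=\L_k\}\right|.$$
So the lemma reduces to the inclusion of sets $\{\bar a : [\{a_i\}]_{\mbox{\lukres}^{\Delta}_n}=\L_k\}\subseteq\{\bar a : [\{a_i\}]_{{\bf W}_{n}}=\L_k\}$.

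The heart of the argument is then a comparison of generated subalgebras inside $\L_k$: I claim that for any $X\subseteq\L_k$, if $[X]_{\mbox{\lukres}^{\Delta}_n}=\L_k$ then $[X]_{{\bf W}_{n}}=\L_k$. The point is that the operations of a $\mbox{\lukres}^{\Delta}_n$-algebra, namely $\iml$, $\Delta$ and the constant $1$, are all term-definable from the Wajsberg operations $\iml$, $\neg$, $1$ on $\L_k$: the implication and $1$ are literally shared, while on $\mbox{{\bf \L}}^{\Delta}_{k}$ the operator $\Delta$ coincides with $x\mapsto \neg(x\rightarrow_{k-1}\neg 1)$ — equivalently, $\Delta x = \neg x \rightarrow_{k-1} x$ evaluated appropriately, or more transparently $\Delta x = 1$ if $x=1$ and $0$ otherwise, which on a $\mbox{\bf \L}^{\bf W}_{k}$-chain is expressed by a Wajsberg term using $\neg$ and iterated implication (using $\mbox{\bf \L}^{\bf W}_{k}\in{\bf W}_k\subseteq{\bf W}_n$ and $\imn$ being $\im_{n-1}$-definable). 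Hence any subset closed under the ${\bf W}_n$-operations is automatically closed under $\iml$, $\Delta$, $1$, so $[X]_{{\bf W}_{n}}$ is an $\mbox{\lukres}^{\Delta}_n$-subalgebra containing $X$, whence $[X]_{\mbox{\lukres}^{\Delta}_n}\subseteq [X]_{{\bf W}_{n}}$; taking $X=\{a_i\}$ with $[X]_{\mbox{\lukres}^{\Delta}_n}=\L_k$ forces $[X]_{{\bf W}_{n}}=\L_k$. Combining this with the two displayed set-descriptions gives the claimed inequality.

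The main obstacle I anticipate is pinning down the $\Delta$-elimination: one must verify carefully that on the finite chain $\mbox{\bf \L}^{\bf W}_{k}$ (and more generally on $\mbox{\bf \L}^{\bf W}_{n}$, since we are working inside ${\bf W}_n$) the Baaz operator $\Delta$ is given by a ${\bf W}_n$-term, so that the ${\bf W}_n$-subalgebra generated by $X$ really is closed under $\Delta$. This is where the hypothesis $k\le n$ and the remark after Facts~\ref{exam1} — that $\mbox{\bf \L}^{\bf W}_{k}$ need not be a ${\bf W}_n$-subalgebra of $\mbox{\bf \L}^{\bf W}_{n}$ unless $(k-1)\mid(n-1)$ — must be handled with care: the correct statement is not about $\mbox{\bf \L}^{\bf W}_{k}$ embedding in $\mbox{\bf \L}^{\bf W}_{n}$ but simply about $\Delta$ being a derived operation on each $\mbox{\bf \L}^{\bf W}_{k}$ regarded as a ${\bf W}_n$-algebra in its own right, for which one uses the identity $x\imn y = (\mbox{term in }\iml)$ together with $\Delta x = \neg x \imn 0 = \neg x\imn\neg 1$ (checking both cases $x=1$ and $x\neq 1$ by hand on the chain). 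Once that term is exhibited, the rest is the routine universal-algebra bookkeeping sketched above.
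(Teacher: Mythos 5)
Your proposal is correct and is essentially the paper's own argument: the paper likewise sends $h\in Epi_{\mbox{\lukres}^{\Delta}_n}\left(\free{\mbox{\lukres}^{\Delta}_n}{m}, \mbox{{\bf \L}}^{\Delta}_{k}\right)$ to the ${\bf W}_{n}$-homomorphism extending the same assignment of the free generators and deduces surjectivity from the inclusion $\left[f(G)\right]_{\mbox{\lukres}^{\Delta}_n}\subseteq\left[f(G)\right]_{{\bf W}_{n}}$, which is exactly the point you make explicit via term-definability of $\Delta$ on the chain (alternatively: every ${\bf W}_{n}$-subalgebra of $\L_k$ contains $0$ and $1$, hence is automatically closed under $\Delta$). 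One caution: among your candidate terms only $\Delta x=\neg(x\rightarrow_{k-1}\neg 1)$ (equivalently $\neg(x\rightarrow_{n-1}\neg 1)$, since $k\leq n$) is correct on $\L_k$; the restatements $\neg x\rightarrow_{k-1}x$ and $\neg x\imn\neg 1$ evaluate to $\min\{1,kx\}$ and $\min\{1,(n-1)x\}$ respectively, not to $\Delta x$.
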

\begin{dem} Let $G$ and $G'$ two sets of free generators such that $|G|=m=|G'|$ and let $\beta:G'\fun G$ a bijection. For every $h\in Epi_{\mbox{\lukres}^{\Delta}_n}\left(\free{\mbox{\lukres}^{\Delta}_n}{G}, \mbox{{\bf \L}}^{\Delta}_{k}\right)$ let $f:=\restr{h}{G}$ and let \break $f'=f\circ\beta:G'\fun \mbox{{\bf \L}}^{\Delta}_{k}$. Then, there exists a ${\bf W}_{n}$-homomorphism $h$, \break $h:\free{{\bf W}_{n}}{G'}\fun \mbox{{\bf \L}}^{\bf W}_{k}$ which extends $f'$. Then, $h'\in Epi_{{\bf W}_{n}}\left(\free{{\bf W}_{n}}{G'}, \mbox{\bf \L}^{\bf W}_{k}\right)$. Indeed, $h'\left(\free{{\bf W}_{n}}{G'}\right)=\left[f'(G')\right]_{{\bf W}_{n}}=\left[f(G)\right]_{{\bf W}_{n}}$, on the other hand,  $\mbox{\lukres}^{\Delta}_{k} = h(\free{\mbox{\lukres}^{\Delta}_n}{m})=\left[f(G)\right]_{\mbox{\lukres}^{\Delta}_n} \subseteq \left[f(G)\right]_{{\bf W}_{n}}$, then $h'(\free{{\bf W}_{n}}{m})=\mbox{\bf \L}^{\bf W}_{k}$.
Let 
$$\psi: Epi_{\mbox{\lukres}^{\Delta}_n}\left(\free{\mbox{\lukres}^{\Delta}_n}{G}, \mbox{{\bf \L}}^{\Delta}_{k}\right) \fun Epi_{\bf W}\left(\free{{\bf W}_{n}}{G'}, \mbox{\bf \L}^{\bf W}_{k}\right)$$

defined by \, $\psi(h)=h'$. Then, it is not difficult to check that $\psi$ is one-to-one.
\end{dem}

\

\begin{lemma} \, $\free{\mbox{\lukres}^{\Delta}_n}{m}$ is finite,  for  $2\leq k \leq n$.
\end{lemma}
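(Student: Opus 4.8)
The plan is to realize $\free{\mbox{\lukres}^{\Delta}_n}{m}$ as (a copy of) a subalgebra of a \emph{finite} product of \emph{finite} algebras, and then to conclude by the trivial observation that a subalgebra of a finite algebra is finite. Every ingredient needed is already at hand: semisimplicity of the variety (Theorem~\ref{teoavf}, or equivalently Lemma~\ref{lemprodsub}), the classification of the simple algebras as the finitely many finite chains $\mbox{{\bf \L}}^{\Delta}_{k}$ with $2\le k\le n$ (Corollary~\ref{lemsim}), and a finite bound on the number of epimorphisms onto each $\mbox{{\bf \L}}^{\Delta}_{k}$ (Lemma~\ref{lemaux} combined with (RS); in fact an even cruder count is enough, as noted below).

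First I would dispose of the degenerate case: if $\free{\mbox{\lukres}^{\Delta}_n}{m}$ is trivial there is nothing to prove. Otherwise, for each $k$ with $2\le k\le n$ set $E_k=Epi_{\mbox{\lukres}^{\Delta}_n}\!\left(\free{\mbox{\lukres}^{\Delta}_n}{m},\mbox{{\bf \L}}^{\Delta}_{k}\right)$ and consider the homomorphism
$$\Phi:\free{\mbox{\lukres}^{\Delta}_n}{m}\fun \prod_{k=2}^{n}\ \left(\mbox{{\bf \L}}^{\Delta}_{k}\right)^{E_k},\qquad \Phi(x)=\big(h(x)\big)_{2\le k\le n,\ h\in E_k}.$$
The key step is that $\Phi$ is injective, and this is exactly where semisimplicity is used: given $x\ne y$ in $\free{\mbox{\lukres}^{\Delta}_n}{m}$, Lemma~\ref{lemprodsub} exhibits this free algebra as a subdirect product of simple $\mbox{\lukres}^{\Delta}_n$-algebras, so some subdirect projection $g$ onto a simple algebra $\mathbf S$ satisfies $g(x)\ne g(y)$; by Corollary~\ref{lemsim}, $\mathbf S\cong \mbox{{\bf \L}}^{\Delta}_{k}$ for some $k\le n$, and composing $g$ with such an isomorphism produces an element of $E_k$ that separates $x$ and $y$. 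Hence $\Phi$ is an embedding.

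It then remains only to observe that the codomain of $\Phi$ is finite, i.e.\ that each $E_k$ is finite (each chain $\mbox{{\bf \L}}^{\Delta}_{k}$ being finite, and the outer product running over the finite set $\{2,\dots,n\}$). Here I would invoke Lemma~\ref{lemaux}: $|E_k|\le \left|Epi_{{\bf W}_{n}}\!\left(\free{{\bf W}_{n}}{m},\mbox{\bf \L}^{\bf W}_{k}\right)\right|=v_m(k)<\omega$ by (RS). Alternatively one bypasses (RS) entirely: a homomorphism out of $\free{\mbox{\lukres}^{\Delta}_n}{m}$ is determined by the images of the $m$ free generators, whence $|E_k|\le k^{m}$. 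Either way $\prod_{k=2}^{n}\left(\mbox{{\bf \L}}^{\Delta}_{k}\right)^{E_k}$ is finite, so $\free{\mbox{\lukres}^{\Delta}_n}{m}$, being isomorphic to a subalgebra of a finite algebra, is finite.

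As for difficulty, I do not expect a genuine obstacle: each step is a bookkeeping consequence of results already established. The only point deserving a moment's care is that the separating simple quotients range over the \emph{finitely many} isomorphism types $\mbox{{\bf \L}}^{\Delta}_{2},\dots,\mbox{{\bf \L}}^{\Delta}_{n}$, each occurring through only finitely many epimorphisms; with that in place the finiteness of the product — and therefore of the free algebra — is immediate.
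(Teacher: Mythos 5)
Your proposal is correct and follows essentially the same route as the paper: both use Lemma~\ref{lemprodsub} to embed $\free{\mbox{\lukres}^{\Delta}_n}{m}$ subdirectly into simple quotients, identify these with the finitely many chains $\mbox{{\bf \L}}^{\Delta}_{k}$ via Corollary~\ref{lemsim}, and conclude by showing the index set of separating (epi)morphisms is finite. The only difference is bookkeeping: the paper counts maximal filters as kernels of homomorphisms and invokes Lemma~\ref{lemaux} over the subalgebras of $\mbox{{\bf \L}}^{\Delta}_{k}$, while you index the product directly by epimorphisms and may even replace Lemma~\ref{lemaux} by the cruder bound $|E_k|\leq k^{m}$, which is a legitimate simplification.
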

\begin{dem} By Lemma \ref{lemprodsub}, there exists a $\mbox{\lukres}^{\Delta}_n$-monomorphism 
$$i:\free{\mbox{\lukres}^{\Delta}_n}{m}\fun \prod \limits_{D\in \E_{\lukres^{\Delta}_n}\left(\free{\mbox{\lukres}^{\Delta}_n}{m}\right)} \free{\mbox{\lukres}^{\Delta}_n}{m}/D.$$
and by Corollary \ref{lemsim}, $\free{\mbox{\lukres}^{\Delta}_n}{m}/D \cong \mbox{{\bf \L}}^{\Delta}_{k}$  and $D\in \E_{\lukres^{\Delta}_n}(\free{\mbox{\lukres}^{\Delta}_n}{m})$. Then, it is enough to prove that $\E_{\lukres^{\Delta}_n}(\free{\mbox{\lukres}^{\Delta}_n}{m})$ is a finite set. Indeed, for every \break $h\in Hom_{\mbox{\lukres}^{\Delta}_n}\left(\free{\mbox{\lukres}^{\Delta}_n}{m}, \mbox{{\bf \L}}^{\Delta}_{k}\right)$ we have that $Ker(h)=\{x:h(x)=1\}\in  \E_{\lukres^{\Delta}_n}(\free{\mbox{\lukres}^{\Delta}_n}{m})$ and the map $h \xmapsto[]{} Ker(h)$ is onto. The proof ends noting that 
 $$Hom_{\mbox{\lukres}^{\Delta}_n}\left(\free{\mbox{\lukres}^{\Delta}_n}{m}, \mbox{{\bf \L}}^{\Delta}_{k}\right) = \bigcup \left\{Epi_{\mbox{\lukres}^{\Delta}_n}\left(\free{\mbox{\lukres}^{\Delta}_n}{m}, S\right) : S \mbox{ is a subalgebra of } \mbox{{\bf \L}}^{\Delta}_{k}\right\},$$
 the sets $Epi_{\mbox{\lukres}^{\Delta}_n}\left(\free{\mbox{\lukres}^{\Delta}_n}{m}, S\right)$, for $S$ subalgebra of $\mbox{{\bf \L}}^{\Delta}_{k}$, are finite (Lemma \ref{lemaux}) and that  $\mbox{{\bf \L}}^{\Delta}_{k}$ has a finite number of subalgebras.
\end{dem}

\

\begin{corollary} The variety $\mbox{\lukres}^{\Delta}_n$ is locally finite.
\end{corollary}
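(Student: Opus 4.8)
The plan is to deduce the corollary directly from the immediately preceding lemma together with standard facts about locally finite varieties. Recall that a variety $\cal K$ is locally finite precisely when every finitely generated $\cal K$-algebra is finite, and that every finitely generated algebra is a homomorphic image of a finitely generated free algebra $\free{\cal K}{m}$ for some $m<\omega$. Since a homomorphic image of a finite algebra is finite, it suffices to know that $\free{\mbox{\lukres}^{\Delta}_n}{m}$ is finite for every $m<\omega$, which is exactly the content of the previous lemma.

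Concretely, I would argue as follows. Let $\A\in\mbox{\lukres}^{\Delta}_n$ be generated by a finite set $X$ with $|X|=m$. By the universal property of free algebras, the inclusion $X\hookrightarrow A$ extends to a $\mbox{\lukres}^{\Delta}_n$-epimorphism $\free{\mbox{\lukres}^{\Delta}_n}{m}\twoheadrightarrow\A$. By the preceding lemma, $\free{\mbox{\lukres}^{\Delta}_n}{m}$ is finite, hence so is its homomorphic image $\A$. Since $\A$ was an arbitrary finitely generated algebra of the variety, $\mbox{\lukres}^{\Delta}_n$ is locally finite.

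There is essentially no obstacle here: the work has already been done in establishing that $\free{\mbox{\lukres}^{\Delta}_n}{m}$ is finite, and the passage from "finitely generated free algebras are finite" to "locally finite" is a purely formal, one-line observation using that quotients of finite algebras are finite. The only point worth a moment's care is confirming that a finitely generated algebra is indeed a quotient of a finitely generated free algebra in this signature, but this is immediate from the definition of $\free{\mbox{\lukres}^{\Delta}_n}{m}$ and its universal mapping property. So the proof is a short remark rather than a genuine argument.
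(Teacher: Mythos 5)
Your proof is correct and matches the paper's (implicit) argument: the paper states this as an immediate corollary of the preceding lemma that $\free{\mbox{\lukres}^{\Delta}_n}{m}$ is finite, and the passage via ``every finitely generated algebra is a homomorphic image of a finitely generated free algebra'' is exactly the intended one-line justification.
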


\

\noindent Let $G=\{g_1,\dots,g_m\}$ be a finite set of free generators. By Theorem \ref{lemlibre1} and the {\em principle of inclusion--exclusion}, we have 
\begin{equation} \left|\free{\mbox{\lukres}^{\Delta}_n}{m}\right| = \left| \bigcup \limits_{i=1}^{m} \, \uparrow \left(\Delta g_i\right) \right| = \sum \limits_{k=1}^{m} (-1)^{k+1} \,  a_k
\end{equation} 
where
\begin{equation} a_k = \sum \limits_{1\leq i_1\leq\dots\leq i_k\leq m} \left| \bigcap \limits_{t=1}^{k} \uparrow \left(\Delta g_{i_t}\right) \right|
\end{equation} 

\noindent By the symmetry of the problem, it is enough to determine  $\left| \bigcap \limits_{j=1}^{k} \uparrow \left(\Delta g_{j}\right) \right|$. \\
Let $G_k=\{g_1, \dots, g_k\}$ and $G_{m-k}=\{g_{k+1}, \dots, g_m\}$ and $g_k^*=\bigvee \limits_{j=1}^{k} g_j$. Then, 
$$\bigcap \limits_{j=1}^{k} \uparrow \left(\Delta g_{j}\right) = N_k = \left\{ x\in \free{\mbox{\lukres}^{\Delta}_n}{m} : \Delta g_k^*\leq x \right\}$$
and so far, we have
\begin{equation}\left|\free{\mbox{\lukres}^{\Delta}_n}{m}\right| = \sum \limits_{k=1}^{m} (-1)^{k+1} \binom{m}{k} \, |N_k|
\end{equation} 

\

\noindent It is not difficult to check that $N_k$ is a $\mbox{\lukres}^{\Delta}_n$-subalgebra of $\free{\mbox{\lukres}^{\Delta}_n}{m}$ with first element $\Delta g_k^*$. Then, the algebra $\bar{N}_k=\langle N_k, \iml, \Delta g_k^*, 1\rangle \in \mbox{\lukres}^{0}_n$  and ${\bf W}(\bar{N}_k)$ is a finite ${\bf W}_{n}$-algebra. By \cite[pg. 97 and pg. 132]{RS}, we have

\begin{equation}\label{eq1} {\bf W}(\bar{N}_k) \cong \prod \limits_{D\in \E(N_k)} {\bf W}(\bar{N}_k)/D
\end{equation} 
Let $i$, $2\leq i\leq n$, and let $\E_{\lukres^{\Delta}_{i}}(N_k)=\left\{D\in \E_{\lukres^{\Delta}_{n}}(N_k) : \bar{N}_k/D \cong \mbox{{\bf \L}}^{\bf W}_{i} \right\}$. It is easy to verify that $\E_{\lukres^{\Delta}_{i}}(N_k)\not=\emptyset$. Then, let
\begin{equation}\label{eq2} \beta_{i+1}(k) = |\E_{\lukres^{\Delta}_{i}}(N_k)|
\end{equation} 

\noindent From (\ref{eq1}) and  (\ref{eq2}) 
\begin{equation}\label{eq3} |N_k|=\prod \limits_{i=2}^{n} i^{\beta_{i}}
\end{equation} 
From \cite{GPS2020}, we know that 
\begin{equation}\label{eq3} \beta_{i}(k)= (i-1)^k  \, i^{m-k} - \sum \limits_{\begin{array}{c }j-1 \mid k-1 \\ j\not=k\end{array}} \beta_{j}(k) 
\end{equation} 
Finally 
\begin{equation} \left|\free{\mbox{\lukres}^{\Delta}_n}{m}\right| = \sum \limits_{k=1}^{m} (-1)^{k+1} \binom{m}{k} \prod \limits_{i=2}^{n} i^{\beta_{i}(k)}
\end{equation} 

\

\section{$n$-valued super-\L ukasiewicz logic expanded by $\Delta$}

In this section, we present a family of $n$-valued logics ($n\geq 2$), by means of Hilbert-style systems which constitute a generalization of the  system studied in \cite{AVF}. Let us consider a denumerable set $Var$ of propositional variables, and let $Fm$ be the propositional language generated by $Var$ over the signature $\{\iml, \Delta\}$, and $\mathfrak{Fm}$ the free algebra with the signature $\{\iml, \Delta, \top\}$. As usual, lowercase Greek letters stand for propositional formulas and uppercase Greek letters stand for sets of formulas. Now, consider the non primitive connective $\imn$ defined by 
$$ \alpha \rightarrow_0 \beta := \beta, \hspace{1cm} \alpha \rightarrow_{n} \beta := \alpha \iml (\alpha \imn \beta), \mbox{ if } n>0 $$


\noindent For $n\geq 2$, the Hilbert-style calculus $\luklog_{n}$ is defined as follows: \\[2mm]
{\bf Axiom schemata:} 

\vspace{-0.3cm}
\begin{myequation}
  \alpha\iml(\beta\iml\alpha)
   \end{myequation}
\vspace{-0.5cm}
\begin{myequation}
 (\alpha\iml \beta)\iml((\beta\iml \gamma)\iml(\alpha\iml\gamma))
   \end{myequation}
\vspace{-0.5cm}
\begin{myequation}
((\alpha\iml \beta)\iml \beta)\iml((\beta\iml\alpha)\iml \alpha)
   \end{myequation}
\vspace{-0.5cm}
\begin{myequation}
((\alpha\iml \beta)\iml(\beta\iml \alpha))\rightarrow(\beta\iml \alpha)
   \end{myequation}
\vspace{-0.5cm}
\begin{equation}\tag{{\bf Ax}5$n$}
((\alpha\imn \beta)\iml \alpha)\iml \alpha
   \end{equation}
\vspace{-0.5cm}
\begin{equation}\tag{{\bf Ax}6}
(\Delta \alpha\iml\Delta \beta)\iml\Delta(\Delta \alpha\iml \beta)
   \end{equation}
   \vspace{-0.5cm}
\begin{equation}\tag{{\bf Ax}7$n$}
\Delta(\Delta \alpha\iml \beta)\iml(\alpha\imn \Delta \beta)
    \end{equation}
   \vspace{-0.5cm}
\begin{equation}\tag{{\bf Ax}8$n$}
(\alpha\imn \beta)\iml(\Delta \alpha\iml \beta)
    \end{equation}
and the only inference rule is {\em modus ponens}: 
\begin{equation}\tag{MP}
       \displaystyle\frac{\alpha ,\quad \alpha\rightarrow \beta}{\beta}
\end{equation}

\noindent We also  consider the non-primitive connective $\vee$ defined as follows: $$\alpha \vee \beta:= (\alpha \rightarrow \beta)\rightarrow \beta$$
Then, ({\bf Ax}3)--({\bf Ax}5) can be expressed, respectively,  as \, 
$$(\alpha\vee\beta)\iml(\beta\vee\alpha), \hspace{0.5cm} (\alpha\iml\beta)\vee(\beta\iml\alpha), \, \mbox{ and \,  } \,  \, (\alpha\imn\beta)\vee\alpha.$$
The notions of (syntactic) theorem, derivation of a formula from a set of hypothesis and derived rule of inference  are the usual. Besides, we write  $\Gamma \vdash_{\luklog_n} \alpha$ to indicate that there is a derivation of $\alpha$ from $\Gamma$ in $\luklog_n$. It $\alpha$ is a theorem of $\luklog_n$ we write $\vdash_{\luklog_n} \alpha$. The calculus $\luklog_{n}$ gives rise to a propositional logic which we name as the calculus. The following lemma shows some theorems and derived rules of the $\{\iml\}$-fragment of $\luklog_n$.

\

\begin{proposition} The following formulas (rules) are theorems (derived) in $\luklog_n$. 
\hspace{0.5cm}
 \begin{multicols}{2}
\begin{itemize}
      \item[($\luklog_n$1)]  $((\alpha\iml \beta)\iml \gamma)\iml(\beta\iml \gamma)$;
    \item[($\luklog_n$2)] $\displaystyle\frac{\alpha\iml \beta, \beta\iml\gamma}{\alpha\iml\gamma}$;
    \item[($\luklog_n$3)] $\alpha\iml \alpha\vee \beta$;
    \item[($\luklog_n$4)] $((\alpha\vee \gamma)\iml \beta)\iml(\alpha\iml \beta)$;
    \item[($\luklog_n$5)]  $\alpha\iml \alpha$;
    \item[($\luklog_n$6)]  $\big(((\beta\iml\beta)\iml\alpha)\iml\alpha\big)$;
    \item[($\luklog_n$7)]  $(\alpha\iml (\beta\iml \gamma))\iml(\beta\iml(\alpha\iml \gamma))$;
    \item[($\luklog_n$7')]  $\displaystyle\frac{\alpha\iml (\beta\iml \gamma)}{\beta\iml(\alpha\iml \gamma)}$;
    \item[($\luklog_n$8)] $\beta\iml(\alpha\iml \alpha)$;
    \item[($\luklog_n$9)] $((\alpha\vee \gamma)\iml(\beta\iml \gamma))\iml(\alpha\iml(\beta\iml\gamma))$;
      \item[($\luklog_n$10)] $\displaystyle\frac{\alpha\iml\beta}{(\gamma\iml\alpha)\iml(\gamma\iml\beta)}$;
  \item[($\luklog_n$10')] $\displaystyle\frac{\alpha\iml\beta}{(\beta\iml\gamma)\iml(\alpha\iml\gamma)}$;
    \item[($\luklog_n$11)] $(\alpha\iml(\beta\iml\gamma))\iml((\beta\vee \gamma)\iml(\alpha\iml\gamma))$.
\end{itemize}
 \end{multicols}
\end{proposition}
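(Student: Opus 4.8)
First I would note that all thirteen items live in the $\{\iml\}$-fragment (with $\vee$ the usual abbreviation $\alpha\vee\beta := (\alpha\iml\beta)\iml\beta$), so that only \textbf{(Ax1)}--\textbf{(Ax4)} together with \textbf{(MP)} enter the derivations: \textbf{(Ax5$n$)} involves $\imn$, which occurs in none of the items, and the $\Delta$-axioms are plainly irrelevant here. The proof is then a routine Hilbert-style bootstrap, and the only real care needed is to fix a non-circular derivation order. As a warm-up (using \textbf{(Ax1)} and \textbf{(Ax2)} alone): feeding a derivable $\alpha\iml\beta$ into \textbf{(Ax2)} by \textbf{(MP)} gives the derived rule ($\luklog_n$10'), inferring $(\beta\iml\gamma)\iml(\alpha\iml\gamma)$ from $\alpha\iml\beta$; one more \textbf{(MP)} with $\beta\iml\gamma$ gives the syllogism rule ($\luklog_n$2); and applying ($\luklog_n$10') to the \textbf{(Ax1)}-instance $\beta\iml(\alpha\iml\beta)$ yields ($\luklog_n$1).

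The crux is to derive the exchange law ($\luklog_n$7), namely $(\alpha\iml(\beta\iml\gamma))\iml(\beta\iml(\alpha\iml\gamma))$, by the usual (and somewhat intricate) chain of \textbf{(MP)}-steps combining \textbf{(Ax2)} with the specifically \L ukasiewicz axioms \textbf{(Ax3)}--\textbf{(Ax4)} (equivalently $(\alpha\vee\beta)\iml(\beta\vee\alpha)$ and $(\alpha\iml\beta)\vee(\beta\iml\alpha)$); this use is unavoidable, since the calculus over \textbf{(Ax1)} and \textbf{(Ax2)} alone has no exchange. Once ($\luklog_n$7) is in hand, ($\luklog_n$7') is just \textbf{(MP)} on it, and the remaining core facts follow quickly: ($\luklog_n$7') applied to the \textbf{(Ax1)}-instance $\alpha\iml(\beta\iml\alpha)$ gives the schema $\beta\iml(\alpha\iml\alpha)$, which is ($\luklog_n$8); substituting in that schema for $\beta$ any theorem (e.g.\ an \textbf{(Ax1)}-instance) and then using \textbf{(MP)} gives the identity ($\luklog_n$5), $\alpha\iml\alpha$; and ($\luklog_n$7') applied to the identity $(\alpha\iml\beta)\iml(\alpha\iml\beta)$ gives the assertion law ($\luklog_n$3), $\alpha\iml\big((\alpha\iml\beta)\iml\beta\big)$.

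Everything else is then one or two steps. Applying ($\luklog_n$10') to ($\luklog_n$3) gives ($\luklog_n$4), and substituting $\beta\iml\gamma$ for $\beta$ in ($\luklog_n$4) gives ($\luklog_n$9) verbatim. The instance $(\beta\iml\beta)\iml\big(((\beta\iml\beta)\iml\alpha)\iml\alpha\big)$ of ($\luklog_n$3), together with ($\luklog_n$5) and \textbf{(MP)}, gives ($\luklog_n$6). For ($\luklog_n$10), permuting the two antecedents of \textbf{(Ax2)} via ($\luklog_n$7') produces the prefixing axiom $(\alpha\iml\beta)\iml\big((\gamma\iml\alpha)\iml(\gamma\iml\beta)\big)$, which \textbf{(MP)} turns into the rule. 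Finally ($\luklog_n$11) follows by combining ($\luklog_n$7'), ($\luklog_n$4)/($\luklog_n$9) and \textbf{(Ax3)}, the last of these providing the commutativity of $\vee$ that lets $\beta\iml\gamma$ be absorbed into $\beta\vee\gamma$.

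The main obstacle is precisely deriving the exchange law ($\luklog_n$7) from \textbf{(Ax1)}--\textbf{(Ax4)}: this is the only part that is not a one-line manipulation, and I expect the write-up there to be the longest. Apart from that the proof is pure bookkeeping, and the only thing to monitor is the order of dependence, so that ($\luklog_n$7) is established first, then ($\luklog_n$7') and ($\luklog_n$8), then ($\luklog_n$5), and only afterwards ($\luklog_n$3), ($\luklog_n$4), ($\luklog_n$6), ($\luklog_n$9), ($\luklog_n$10) and ($\luklog_n$11); ($\luklog_n$1), ($\luklog_n$2) and ($\luklog_n$10') may be placed anywhere.
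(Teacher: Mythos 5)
Your plan is correct and is essentially the paper's approach: the paper's entire proof of this proposition is the single line ``It is routine,'' and the one- and two-step derivations you give (($\luklog_n$1), ($\luklog_n$2), ($\luklog_n$10'), then ($\luklog_n$7')--($\luklog_n$10) after the exchange law) all check out, exchange ($\luklog_n$7) being indeed the only step requiring the longer classical derivation from ({\bf Ax}2)--({\bf Ax}4). For ($\luklog_n$11) a cleaner route than the combination you gesture at is: take the ($\luklog_n$3)-instance $(\beta\iml\gamma)\iml\big((\beta\vee\gamma)\iml\gamma\big)$, prefix it with $\alpha$ via ($\luklog_n$10) to get $(\alpha\iml(\beta\iml\gamma))\iml\big(\alpha\iml((\beta\vee\gamma)\iml\gamma)\big)$, and compose by ($\luklog_n$2) with the ($\luklog_n$7)-instance $\big(\alpha\iml((\beta\vee\gamma)\iml\gamma)\big)\iml\big((\beta\vee\gamma)\iml(\alpha\iml\gamma)\big)$.
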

\begin{proof} It is routine.
\end{proof}

\ 

\noindent Note that the rule {\em $k$-weak modus ponens} 
\begin{equation}\tag{$k$WMP}
\displaystyle\frac{\alpha, \, \alpha\imk\beta}{\beta}
\end{equation}
is derivable in $\luklog_n$ every $k\geq 1$.\\[2mm]
The following proposition shows some theorems and derived rules of $\luklog_n$ involving $\imn$. We shall note \, $\alpha\ssi\beta$ \, to indicate that both $\alpha\iml\beta$ and $\beta\iml\alpha$ are theorems.

\

\begin{proposition} The following formulas (rules) are theorems (derived) in $\luklog_n$.
\hspace{0.5cm}
\begin{itemize}
    \item[($\luklog_n$12)]  $\displaystyle\frac{\alpha\iml \beta}{(\gamma\imk\alpha)\iml(\gamma\imk\beta)}$; 
    \item[($\luklog_n$13)] $(\alpha \imk(\beta\iml\gamma))\ssi(\beta\iml(\alpha\imk\gamma))$;
    \item[($\luklog_n$14)] $\alpha\imk\alpha$;
    \item[($\luklog_n$15)] $(\alpha\imn(\alpha\iml \beta))\ssi(\alpha\imn\beta)$;
  \item[($\luklog_n$15')] $(\alpha\imn(\alpha\imn \beta))\ssi(\alpha\imn\beta)$;
   \item[($\luklog_n$16)]  $(\alpha\imn(\beta\iml\gamma))\ssi((\alpha\imn\beta)\iml(\alpha\imn\gamma))$;
   \item[($\luklog_n$17)] $(\alpha\imn(\beta\imk\gamma))\ssi((\alpha\imn\beta)\imk(\alpha\imn\gamma))$;
 \item[($\luklog_n$18)]  $\displaystyle\frac{\alpha\imn\beta, \, \beta\imn\gamma}{\alpha\imn\gamma}$,
 \item[($\luklog_n$19)] $\displaystyle\frac{\alpha\imk \beta}{(\gamma\iml\alpha)\imk(\gamma\iml\beta)}$ \, and \, $\displaystyle\frac{\alpha\imk\beta}{(\beta\iml\gamma)\imk(\alpha\iml\gamma)}$.
\end{itemize}
\end{proposition}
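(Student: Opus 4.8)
The plan is to read each item as the syntactic shadow of a fact already proved algebraically — ($\luklog_n$12) of (\L11), ($\luklog_n$13) of (\L16), ($\luklog_n$14) of (\L20), ($\luklog_n$15) and ($\luklog_n$15') of Remark~\ref{rem1}(i)--(ii), ($\luklog_n$16) of (\L17), ($\luklog_n$17) of Proposition~\ref{prop22}(i) — and to transcribe those arguments into Hilbert derivations: every inequality $a\leq b$ becomes a theorem $a\iml b$, every identity becomes a pair of such theorems (a $\ssi$), and every ``apply $f$ to both sides of $a\leq b$'' step is replaced by the matching derived rule of the previous two propositions. Thus ($\luklog_n$12) is an induction on $k$ whose base cases $k=0,1$ are the premise and ($\luklog_n$10), the step wrapping the inductive hypothesis in one more $\gamma\iml(-)$ via ($\luklog_n$10). ($\luklog_n$13) is an induction on $k$ with base cases ($\luklog_n$5) ($k=0$) and ($\luklog_n$7) in both orientations ($k=1$), the step combining the inductive hypothesis through ($\luklog_n$10) and ($\luklog_n$10') with one use of ($\luklog_n$7). ($\luklog_n$14) (for $k\geq 1$) is an induction on $k$ with base ($\luklog_n$5), the step prefixing the theorem $\alpha\imk\alpha$ with $\alpha\iml(-)$ by {\bf Ax}1 and MP. Throughout, ($\luklog_n$2) is used to chain $\ssi$'s, and ($\luklog_n$10), ($\luklog_n$10') to propagate $\ssi$ through $\iml$.

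The extra ingredient needed for the rest is the ``stabilization'' consequence of {\bf Ax}5$n$. Reading {\bf Ax}5$n$ as $(\alpha\imn\beta)\vee\alpha$ and using the reformulation of {\bf Ax}3 (commutativity of $\vee$) one gets $\vdash\alpha\vee(\alpha\imn\beta)$, i.e.\ $\vdash(\alpha\rightarrow_n\beta)\iml(\alpha\imn\beta)$; since the converse $\vdash(\alpha\imn\beta)\iml(\alpha\rightarrow_n\beta)$ is an instance of {\bf Ax}1, one obtains $\vdash(\alpha\rightarrow_m\beta)\ssi(\alpha\imn\beta)$ for every $m\geq n-1$. With this, ($\luklog_n$15) is immediate ($\Rightarrow$ by ($\luklog_n$12) applied to {\bf Ax}1; $\Leftarrow$ by ($\luklog_n$13) to rewrite $\alpha\imn(\alpha\iml\beta)$ as $\alpha\rightarrow_n\beta$, then the stabilization implication), and ($\luklog_n$15') follows once one also records the elementary additivity $\vdash(\alpha\rightarrow_a(\alpha\rightarrow_b\beta))\ssi(\alpha\rightarrow_{a+b}\beta)$ (induction on $a$), since then $\alpha\imn(\alpha\imn\beta)\ssi\alpha\rightarrow_{2(n-1)}\beta\ssi\alpha\imn\beta$.

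The technical heart is ($\luklog_n$16), and I would follow Remark~\ref{rem1}'s double-inequality proof of (\L17) line for line: first derive $\vdash(\alpha\imn(\beta\iml\gamma))\iml((\alpha\iml\beta)\iml(\alpha\imn\gamma))$ from {\bf Ax}2 via ($\luklog_n$12), ($\luklog_n$13) and ($\luklog_n$15); then run the remark's chain of antecedent-exchanges (($\luklog_n$7)), applications of ($\luklog_n$12), and simplifications by ($\luklog_n$13), ($\luklog_n$15), ($\luklog_n$15') that upgrades the $\alpha\iml\beta$ in the antecedent to $\alpha\imn\beta$, reaching $\vdash(\alpha\imn(\beta\iml\gamma))\iml((\alpha\imn\beta)\iml(\alpha\imn\gamma))$; the reverse implication is obtained by the same recipe. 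Given ($\luklog_n$16) the rest is short: ($\luklog_n$17) is its induction on $k$ (mirroring Proposition~\ref{prop22}(i)), peeling off one $\beta\iml(-)$ via ($\luklog_n$16) and feeding the inductive hypothesis through ($\luklog_n$10)/($\luklog_n$10'); ($\luklog_n$18) follows from ($\luklog_n$17) with $k:=n-1$ by prefixing the second premise with $\alpha\iml(-)$ repeatedly, one MP, and then the rule ($k$WMP) with $k=n-1$ applied to the first premise; and the two halves of ($\luklog_n$19) are the $\rightarrow_k$-strengthenings of ($\luklog_n$10)/($\luklog_n$10'), obtained by induction on $k$. The only real obstacle, exactly as in Remark~\ref{rem1}, is the bookkeeping in ($\luklog_n$16): with no deduction theorem available, one must track at each node of the chain which orientation of each $\ssi$ is needed and reassemble it from {\bf Ax}1, {\bf Ax}2, ($\luklog_n$7) and MP.
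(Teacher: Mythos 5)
Your proposal is correct and follows essentially the same route as the paper: inductions on $k$ for ($\luklog_n$12), ($\luklog_n$13), ($\luklog_n$17) and ($\luklog_n$19), the Ax5$n$+Ax3 ``stabilization'' combined with ($\luklog_n$13) for ($\luklog_n$15), a line-by-line transcription of Remark \ref{rem1} for ($\luklog_n$16), and ($\luklog_n$18) via weakening the second premise to $\alpha\imn(\beta\imn\gamma)$, the ($\luklog_n$17) instance, and ($k$WMP). The only cosmetic slip is that your $\Rightarrow$/$\Leftarrow$ labels in ($\luklog_n$15) are swapped (monotonicity on Ax1 gives the $\Leftarrow$ direction, ($\luklog_n$13) plus stabilization the $\Rightarrow$), but both ingredients are present.
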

\begin{proof} \\
($\luklog_n$12): \,  For $k=1$, it holds by ($\luklog_n$10).

\

\begin{tabular}{ll|r}
1. & $\alpha\iml\beta$ & Hyp. \\ 
2. & $(\gamma\imk\alpha)\iml(\gamma\imk\beta)$ & 1, (I.H.) \\ 
3. & $(\gamma\iml(\gamma\imk\alpha))\iml(\gamma\iml(\gamma\imk\beta)$ & 2,($\luklog_n$10)\\ 
4.&$(\gamma\rightarrow_{k+1}\alpha))\iml(\gamma\rightarrow_{k+1}\beta)$ & 3\\ 
\end{tabular} \\[3mm]
\noindent ($\luklog_n$13): \,  For $k=1$, it holds by ($\luklog_n$7).\\[2mm]

\

\begin{tabular}{ll|r}
1. & $(\alpha\imk(\beta\iml\gamma))\iml(\beta\iml(\alpha\imk\gamma))$ &  (I.H.) \\ 
2. & $\big(\alpha\iml(\alpha\imk(\beta\iml\gamma))\big)\iml\big(\alpha\iml(\beta\iml(\alpha\imk\gamma))\big)$ & 1, ($\luklog_n$10) \\ 
3. & $\big(\alpha\iml(\beta\iml(\alpha\imk\gamma))\big)\iml\big(\beta\iml(\alpha\iml(\alpha\imk\gamma))\big)$ & 2,($\luklog_n$7)\\ 
4.&$(\alpha\rightarrow_{k+1}(\beta\iml\gamma))\iml(\beta\iml(\alpha\rightarrow_{k+1}\gamma))$ & 2, 3 and ($\luklog_n$2) \\ 
\end{tabular} \\[6mm]
$(\alpha \iml(\beta\imk\gamma))\iml(\beta\imk(\alpha\iml\gamma))$ \, is proved analogously. \\[2mm]
 ($\luklog_n$14): From ($\luklog_n$5) and ({\bf Ax}1).\\[2mm]
($\luklog_n$15): From ({\bf Ax}5$n$), ({\bf Ax}3), ($\luklog_n$13) and (MP). \\[2mm]
($\luklog_n$16): 

\
\begin{center}
\begin{tabular}{ll|r}
1. & $(\alpha\iml\beta)\iml((\beta\iml\gamma)\iml(\alpha\iml\gamma))$ &  ({\bf Ax}2) \\ 
2. & $(\beta\iml\gamma)\iml((\alpha\iml\beta)\iml(\alpha\iml\gamma))$ & 1, ($\luklog_n$7') \\ 
3. & $\big(\alpha\imn(\beta\iml\gamma)\big)\iml\big(\alpha\imn((\alpha\iml\beta)\iml(\alpha\iml\gamma))\big)$ & 2, ($\luklog_n$12) \\ 
4. & $\big(\alpha\imn((\alpha\iml\beta)\iml(\alpha\iml\gamma))\big)\iml\big((\alpha\iml\beta)\iml(\alpha\imn(\alpha\iml\gamma))\big)$ & ($\luklog_n$13) \\ 
5. & $\big(\alpha\imn(\beta\iml\gamma)\big)\iml\big((\alpha\iml\beta)\iml(\alpha\imn(\alpha\iml\gamma))\big)$ & 3, 4,   ($\luklog_n$2) \\ 
6. & $(\alpha\imn(\alpha\iml \gamma))\iml(\alpha\imn\gamma)$ &  ($\luklog_n$15) \\ 
7. & $\big((\alpha\iml\beta)\iml(\alpha\imn(\alpha\iml \gamma))\big)\iml\big((\alpha\iml\beta)\iml(\alpha\imn\gamma)\big)$ &  6, ($\luklog_n$12) \\ 
8. & $\big(\alpha\imn(\beta\iml\gamma)\big)\iml \big((\alpha\iml \beta)\iml(\alpha\imn\gamma)\big)$ &  5, 7 and ($\luklog_n$2) \\
9. & $(\alpha\iml \beta)\iml \big(\big(\alpha\imn(\beta\iml\gamma)\big)\iml(\alpha\imn\gamma)\big)$ &  8, ($\luklog_n$7') \\
10. & $\big(\alpha\imn(\alpha\iml \beta)\big)\iml \big(\alpha\imn\big((\alpha\imn(\beta\iml\gamma))\iml(\alpha\imn\gamma)\big)\big)$ &  9,  ($\luklog_n$13) \\
11. & $(\alpha\imn\beta)\iml(\alpha\imn(\alpha\iml \beta))$ &  ($\luklog_n$15) \\
12. & $(\alpha\imn\beta)\iml\big(\alpha\imn\big((\alpha\imn(\beta\iml\gamma))\iml(\alpha\imn\gamma)\big)\big)$ &  10, 11,  ($\luklog_n$2) \\
13. & $\big(\alpha\imn\big((\alpha\imn(\beta\iml\gamma))\iml(\alpha\imn\gamma)\big)\big)\iml$ & \\
&  $\big((\alpha\imn(\beta\iml\gamma))\iml\big(\alpha\imn(\alpha\imn\gamma)\big)\big)$ &   ($\luklog_n$13) \\
14. & $(\alpha\imn\beta)\iml \big((\alpha\imn(\beta\iml\gamma))\iml\big(\alpha\imn(\alpha\imn\gamma)\big)\big)$ &  12, 13, ($\luklog_n$2) \\
15. & $(\alpha\imn(\beta\iml\gamma))\iml \big((\alpha\imn\beta)\iml(\alpha\imn(\alpha\imn\gamma))\big)$ &  14, ($\luklog_n$7') \\
16. & $((\alpha\imn\beta)\iml(\alpha\imn(\alpha\iml \gamma)))\iml((\alpha\imn\beta)\iml(\alpha\imn\gamma))$ &  15, ($\luklog_n$10) \\
17. & $(\alpha\imn(\beta\iml\gamma))\iml((\alpha\imn\beta)\iml(\alpha\imn\gamma))$ & 15, 16, ($\luklog_n$2)
\end{tabular}
\end{center}
The other implication is proved analogously.\\[2mm]
($\luklog_n$17): \,  From ($\luklog_n$16) and using induction on $k$.\\[3mm]
($\luklog_n$18): 

\

\begin{tabular}{ll|r}
1. & $\alpha\imn\beta$ &  Hyp. \\ 
2. & $\beta\imn\gamma$ & Hyp. \\ 
3. & $\alpha\imn((\beta\imn\gamma)\iml(\beta\imn\gamma))$ &($\luklog_n$5), ({\bf Ax}1), (MP) \\ 
4. & $(\beta\imn\gamma)\iml(\alpha\imn(\beta\imn\gamma))$ & ($\luklog_n$13), (MP) \\ 
5. & $\alpha\imn(\beta\imn\gamma)$ & 2, 4, (MP) \\ 
6. & $\alpha\imn(\beta\imn\gamma)\iml((\alpha\imn\beta)\imn(\alpha\imn\gamma))$ & ($\luklog_n$17), $k=n$ \\ 
7. & $(\alpha\imn\beta)\imn(\alpha\imn\gamma)$ & 5, 6, (MP)\\ 
8. & $(\alpha\imn\gamma)$ & 1, 7, ($n$WMP)\\ 
\end{tabular} \\[2mm]
($\luklog_n$19): By induction on $k$.
\end{proof}

\

\begin{lemma}
The following formulas (rules) are theorems (derived) in $\luklog_n$.
 \begin{multicols}{2}
\begin{enumerate}
      \item[($\luklog_n$20)] \, $\Delta \alpha\iml \alpha$;
      \item[($\luklog_n$21)] \, $\alpha\rightarrow_{n}\Delta \alpha$;
      \item[($\luklog_n$22)] \, $\displaystyle\dfrac{\alpha}{\Delta \alpha}$;    
      \item[($\luklog_n$23)] \, $\Delta(\Delta \alpha\iml \alpha)$;
      \item[($\luklog_n$24)] \, $\Delta(\Delta \alpha\iml \beta)\iml(\Delta \alpha\iml \Delta \beta)$;
      \item[($\luklog_n$25)] \, $\displaystyle\dfrac{\alpha\iml \beta}{\Delta \alpha\iml\Delta \beta}$;
     \item[($\luklog_n$26)] \, $\displaystyle\dfrac{\alpha\rightarrow_{n} \beta}{\Delta \alpha\rightarrow_{n}\Delta \beta}$;
  
  \item[($\luklog_n$27)] \, $(\Delta \alpha\iml \beta)\iml(\alpha\imn \beta)$.
  \end{enumerate}
\end{multicols}
\end{lemma}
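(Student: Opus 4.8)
The plan is to derive the eight items in a convenient order --- roughly ($\luklog_n$23), ($\luklog_n$20), ($\luklog_n$21), ($\luklog_n$22), ($\luklog_n$24), ($\luklog_n$25), ($\luklog_n$27), ($\luklog_n$26) --- feeding each into the next and using only the $\Delta$-axioms ({\bf Ax}6), ({\bf Ax}7$n$), ({\bf Ax}8$n$) alongside the $\{\iml,\imn\}$-theorems and derived rules already established. Five of them fall out at once, by one instantiation and one application of (MP): putting $\beta:=\alpha$ in ({\bf Ax}6) and detaching against the theorem $\Delta\alpha\iml\Delta\alpha$ (an instance of ($\luklog_n$5)) gives ($\luklog_n$23); putting $\beta:=\alpha$ in ({\bf Ax}8$n$) and detaching against ($\luklog_n$14) gives ($\luklog_n$20); putting $\beta:=\alpha$ in ({\bf Ax}7$n$) and detaching against ($\luklog_n$23) gives the theorem $\alpha\imn\Delta\alpha$, which ({\bf Ax}1) weakens to $\alpha\iml(\alpha\imn\Delta\alpha)=\alpha\rightarrow_n\Delta\alpha$, i.e.\ ($\luklog_n$21); the necessitation rule ($\luklog_n$22) then follows from ($\luklog_n$21) by ($n$WMP); and chaining ({\bf Ax}7$n$) with the instance $\beta\mapsto\Delta\beta$ of ({\bf Ax}8$n$) through ($\luklog_n$2) gives ($\luklog_n$24), which is exactly the converse of ({\bf Ax}6), so the two together yield $\Delta(\Delta\alpha\iml\beta)\ssi\Delta\alpha\iml\Delta\beta$.

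With necessitation available, ($\luklog_n$25) is short: from the hypothesis $\alpha\iml\beta$ and ($\luklog_n$20) one gets $\Delta\alpha\iml\beta$ by ($\luklog_n$2), then $\Delta(\Delta\alpha\iml\beta)$ by ($\luklog_n$22), and finally $\Delta\alpha\iml\Delta\beta$ by ($\luklog_n$24) and (MP). The crux is ($\luklog_n$27), $(\Delta\alpha\iml\beta)\iml(\alpha\imn\beta)$: algebraically $\Delta$\L1 is an equation, but $\luklog_n$ postulates only the direction ({\bf Ax}8$n$), so the reverse direction has to be bootstrapped, and --- since no deduction theorem is in play --- this must be done purely Hilbert-style. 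I would first note that $\varphi\iml(\alpha\imn\varphi)$ is a theorem (it is ($\luklog_n$13) applied to the theorem $\alpha\imn(\varphi\iml\varphi)$); then, taking $\varphi:=\Delta\alpha\iml\beta$ and chaining this theorem with ($\luklog_n$16) through ($\luklog_n$2), one gets $(\Delta\alpha\iml\beta)\iml((\alpha\imn\Delta\alpha)\iml(\alpha\imn\beta))$; permuting the two antecedents by ($\luklog_n$7') and detaching against the theorem $\alpha\imn\Delta\alpha$ from ($\luklog_n$21) yields ($\luklog_n$27). (A variant would instead internalise the monotonicity rule ($\luklog_n$12) by induction on $k$ and detach likewise.)

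Once ($\luklog_n$27) is in hand, ({\bf Ax}8$n$) and ($\luklog_n$27) give $\Delta\alpha\iml\beta\ssi\alpha\imn\beta$; combined with ({\bf Ax}1), ({\bf Ax}3) and ({\bf Ax}5$n$), which already give $\alpha\imn\beta\ssi\alpha\rightarrow_n\beta$ (since $\alpha\rightarrow_n\beta$ is $\alpha\iml(\alpha\imn\beta)$ and $\alpha\vee(\alpha\imn\beta)$ is a theorem), this produces the full ``collapse'' $\alpha\rightarrow_n\beta\ssi\alpha\imn\beta\ssi\Delta\alpha\iml\beta$ for all formulas; a short argument using ($\luklog_n$18), ($\luklog_n$21), ({\bf Ax}8$n$) and ($\luklog_n$20) also gives $\Delta\Delta\alpha\ssi\Delta\alpha$. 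Then ($\luklog_n$26) follows: from the hypothesis $\alpha\rightarrow_n\beta$ the collapse and ({\bf Ax}8$n$) give $\Delta\alpha\iml\beta$, ($\luklog_n$22) gives $\Delta(\Delta\alpha\iml\beta)$, ($\luklog_n$24) gives $\Delta\alpha\iml\Delta\beta$, and the target $\Delta\alpha\rightarrow_n\Delta\beta$ is identified with $\Delta\alpha\iml\Delta\beta$ through the collapse (at $\Delta\alpha,\Delta\beta$) and $\Delta$-idempotence. The main obstacle is therefore ($\luklog_n$27): because $\luklog_n$ is a Hilbert calculus in which the required half of $\Delta$\L1 is not an axiom, its derivation does not come for free and needs the detour through ($\luklog_n$13), ($\luklog_n$16) and ($\luklog_n$21); the recurring nuisance is the one-step gap between $\rightarrow_{(n-1)}$ and $\rightarrow_n$, which the collapse is designed to absorb.
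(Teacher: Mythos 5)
Your derivations are correct, and for ($\luklog_n$20)--($\luklog_n$25) they coincide with the paper's up to a harmless reordering: you obtain ($\luklog_n$23) directly from ({\bf Ax}6) and $\Delta\alpha\iml\Delta\alpha$ and then feed it into ({\bf Ax}7$n$) to get $\alpha\imn\Delta\alpha$, which is exactly the sub-derivation the paper carries out inside its proof of ($\luklog_n$21); the instantiations and detachments for ($\luklog_n$20), ($\luklog_n$22), ($\luklog_n$24) and ($\luklog_n$25) are the same as in the paper. Where you genuinely diverge is in ($\luklog_n$27) and ($\luklog_n$26). For ($\luklog_n$27) the paper also starts from $\alpha\imn\Delta\alpha$, but passes to $\alpha\imn\big((\beta\iml\Delta\alpha)\iml\Delta\alpha\big)$ and converts via the ({\bf Ax}3)-instance $\big((\beta\iml\Delta\alpha)\iml\Delta\alpha\big)\iml\big((\Delta\alpha\iml\beta)\iml\beta\big)$ together with ($\luklog_n$12) and ($\luklog_n$13); you instead prove $\varphi\iml(\alpha\imn\varphi)$ (the same ($\luklog_n$5)/({\bf Ax}1)/($\luklog_n$13) pattern the paper itself uses in ($\luklog_n$18)), distribute with ($\luklog_n$16), permute antecedents with ($\luklog_n$7') and detach $\alpha\imn\Delta\alpha$; both arguments hinge on the same key theorem, and yours is a perfectly sound, arguably more streamlined, alternative. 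For ($\luklog_n$26) the paper applies ($\luklog_n$25) to $\Delta\alpha\iml\beta$, necessitates and closes with ({\bf Ax}7$n$), whereas you go through ($\luklog_n$24) and your collapse $\alpha\rightarrow_n\beta\ssi\alpha\imn\beta\ssi\Delta\alpha\iml\beta$; your route has the merit of handling explicitly the $\rightarrow_{(n-1)}$ versus $\rightarrow_n$ gap that the paper glosses (its proof takes $\alpha\imn\beta$ as hypothesis, and its ({\bf Ax}7$n$) step needs $\Delta$-idempotence or a different instance to land exactly on $\Delta\alpha\imn\Delta\beta$), although for the last step a plain iterated ({\bf Ax}1)-weakening of $\Delta\alpha\iml\Delta\beta$ up to $\Delta\alpha\rightarrow_n\Delta\beta$ would be even shorter than invoking the collapse and idempotence.
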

\begin{proof} \, ($\luklog_n$20):

\

\begin{tabular}{ll|r}
1. & $(\alpha\rightarrow_{n} \alpha)\iml(\Delta \alpha\iml \alpha)$ & ({\bf Ax}8$n$) \\
2. & $\alpha\rightarrow_{n}\alpha$ & ($\luklog_n$14)  \\
4. & $\Delta \alpha\iml \alpha$ & 2, 1 and (MP)  \\
\end{tabular}\\[3mm]
($\luklog_n$21):
\

\begin{tabular}{ll|r}
1. & $\alpha$ & hyp. \\
2. & $\Delta \alpha\iml\Delta \alpha $ & ($\luklog_n$5) \\
3. & $\Delta (\Delta \alpha\iml\alpha) \iml ( \alpha \rightarrow_{n}  \Delta\alpha)$ & ({\bf Ax}7$n$) \\
 4. & $(\Delta \alpha \iml \Delta \alpha) \iml( \Delta(\Delta \alpha \iml \alpha))$ &  ({\bf Ax}6)\\
5. & $\Delta(\Delta \alpha \iml \alpha)$ &  2, 4 and (MP)\\
6.  & $\alpha \rightarrow_{n}  \Delta\alpha$ & 5, 3 and (MP)\\
\end{tabular} \\[3mm]
($\luklog_n$22): It is consequence of ($\luklog_n$21) and (MP).\\[3mm]
($\luklog_n$23): It is consequence of ($\luklog_n$20) and ($\luklog_n$22).\\[3mm]
($\luklog_n$24):

\begin{tabular}{ll|r}
1. & $(\alpha \rightarrow_{n} \Delta \beta)\iml(\Delta \alpha \iml\Delta \beta)$ & ({\bf Ax}8$n$) \\
2. & $\big(\Delta(\Delta \alpha\iml \beta)\iml(\alpha \rightarrow_{n} \Delta \beta)\big)\iml\big(\Delta(\Delta \alpha\iml \beta)\iml(\Delta \alpha \iml\Delta \beta)\big)$ &   1 and  ($\luklog_n$10) \\
3. & $\Delta(\Delta \alpha \iml \beta)\iml(\alpha \rightarrow_{n} \Delta \beta))$ & ({\bf Ax}7$n$)  \\
4. & $\Delta(\Delta \alpha\iml \beta)\iml(\Delta \alpha\iml\Delta \beta)$ & 3, 2 and  (MP) \\
\end{tabular}\\[3mm]
($\luklog_n$25):
\

\begin{tabular}{ll|r}

1. & $\alpha \iml \beta $ & Hyp. \\
2. & $(\Delta \alpha\iml \beta)\iml(\Delta \alpha\iml \beta)$ & ($\luklog_n$5$n$) \\
3. & $\Delta\alpha \iml\alpha$ & ($\luklog_n$25)  \\
4. & $(\alpha\iml\beta)\iml(\Delta\alpha \iml\beta)$ & 3 and ($\luklog_n$10')  \\
5. & $\Delta \alpha\iml \beta$ & 1, 4  and (MP)  \\
6. & $\Delta(\Delta \alpha\iml \beta)$ & 5 and ($\luklog_n$22) \\
7. & $\Delta(\Delta \alpha\iml \beta)\iml(\Delta \alpha\iml\Delta \beta)$ &($\luklog_n$24) \\
8. & $\Delta \alpha\iml\Delta \beta$ & 5, 6  and  (MP) \\
\end{tabular}\\[3mm]
($\luklog_n$26):
\

\begin{tabular}{ll|r}

1. & $\alpha \imn \beta $ & Hyp. \\
2. & $(\alpha\imn \beta)\iml(\Delta \alpha\iml \beta)$ & ({\bf Ax}8$n$) \\
3. & $\Delta\alpha \iml\beta$ & 1, 2 and (MP)   \\
4. & $\Delta\Delta\alpha \iml\Delta\beta$ & 3 and ($\luklog_n$25)  \\
5. & $\Delta\big(\Delta\Delta\alpha \iml\Delta\beta\big)$ & 4  and ($\luklog_n$22)  \\
6. & $\Delta\big(\Delta\Delta\alpha \iml\Delta\beta\big)\iml\big(\Delta\alpha\imn\Delta\beta\big)$ & ({\bf Ax}7$n$) \\
7. & $\Delta\alpha\imn\Delta\beta$ & 5, 6 and (MP) \\
\end{tabular}\\[3mm]
($\luklog_n$27):
\

\begin{tabular}{ll|r}
 1. & $\alpha \imn \Delta \alpha$ & ($\luklog_n$21) \\
2. & $(\beta\iml \Delta \alpha)\iml(\alpha \imn \Delta \alpha)$ & 1, ({\bf Ax}1), (MP)\\
3.  & $\alpha\iml( (\beta\iml\Delta \alpha)\iml(\alpha \imn \Delta\alpha))$  & 2.  and (T6)\\
4. & $( (\beta\iml\Delta \alpha)\iml(\alpha \imn\alpha))\iml ( \alpha\imn((\beta\iml\Delta \alpha)\iml \Delta\alpha))$  &  ($\luklog_n$13)\\
5. & $\alpha\iml( \alpha\imn((\beta\iml\Delta \alpha)\iml \Delta\alpha)) $  & 3, 4,  ($\luklog_n$2)\\

6. & $((\beta\iml\Delta\alpha)\iml\Delta \alpha)\iml((\Delta\alpha\iml \beta)\iml \beta)$ &  ({\bf Ax}3)\\

7. & $(\alpha \imn ((\beta\iml\Delta \alpha)\iml\Delta \alpha)) \iml( \alpha \imn((\Delta \alpha\iml\beta)\iml\beta))$ &   6,  ($\luklog_n$12) \\
8. & $\alpha \imn((\Delta \alpha\iml\beta)\iml\beta)$ & 5, 7, (MP) \\
9. & $(\alpha \imn((\Delta \alpha\iml\beta)\iml\beta))\iml((\Delta \alpha\iml\beta)\iml(\alpha \imn\beta))$ &   ($\luklog_n$1)\\
10.  & $(\Delta \alpha\iml\beta)\iml(\alpha \imn\beta) $ & 8, 9, (MP)\\
\end{tabular}

\end{proof}

\

\

Next, we shall construct the {\em Lindenbaum-Tarski algebra} for the calculus  for $\luklog_n$, $n\geq 2$. So, let $\alpha, \beta\in Fm$ then let us consider the relation $\equiv$ defined by
$$\alpha\equiv\beta \, \mbox{ \, iff \, } \vdash_{\luklog_n} \alpha \imn \beta \mbox{ and }\vdash_{\luklog_n} \beta \imn \alpha$$ 
Then, 

\

\begin{lemma} \,  $\equiv$ is a congruence relation on $\mathfrak{Fm}$.
\end{lemma}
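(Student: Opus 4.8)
The plan is to show that $\equiv$ is an equivalence relation compatible with both connectives $\iml$ and $\Delta$. First I would check reflexivity, symmetry and transitivity. Reflexivity is immediate from $(\luklog_n 14)$, which gives $\vdash_{\luklog_n}\alpha\imn\alpha$. Symmetry is built into the definition. Transitivity follows from $(\luklog_n 18)$: if $\alpha\equiv\beta$ and $\beta\equiv\gamma$, then $\vdash\alpha\imn\beta$ and $\vdash\beta\imn\gamma$ yield $\vdash\alpha\imn\gamma$ by $(\luklog_n 18)$, and symmetrically $\vdash\gamma\imn\alpha$; hence $\alpha\equiv\gamma$.

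Next I would establish compatibility with $\iml$, i.e.\ that $\alpha\equiv\alpha'$ and $\beta\equiv\beta'$ imply $\alpha\iml\beta\equiv\alpha'\iml\beta'$. It suffices (using transitivity of $\equiv$) to handle one argument at a time: show $\alpha\equiv\alpha'$ implies $\alpha\iml\beta\equiv\alpha'\iml\beta$, and $\beta\equiv\beta'$ implies $\alpha\iml\beta\equiv\alpha\iml\beta'$. For the right argument, from $\vdash\beta\imn\beta'$ the rule $(\luklog_n 19)$ (second form with $k=n$) gives $\vdash(\beta\iml\gamma)\imn(\ldots)$ — more directly, $(\luklog_n 19)$ first form, $\dfrac{\alpha\imk\beta}{(\gamma\iml\alpha)\imk(\gamma\iml\beta)}$ with $k=n$, yields $\vdash(\alpha\iml\beta)\imn(\alpha\iml\beta')$; symmetrically from $\vdash\beta'\imn\beta$ we get $\vdash(\alpha\iml\beta')\imn(\alpha\iml\beta)$. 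For the left argument, from $\vdash\alpha\imn\alpha'$ the second form of $(\luklog_n 19)$, $\dfrac{\alpha\imk\beta}{(\beta\iml\gamma)\imk(\alpha\iml\gamma)}$ with $k=n$, gives $\vdash(\alpha'\iml\beta)\imn(\alpha\iml\beta)$, and symmetrically the other direction. Combining these via transitivity gives the full congruence condition for $\iml$.

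Then I would establish compatibility with $\Delta$: $\alpha\equiv\beta$ implies $\Delta\alpha\equiv\Delta\beta$. From $\vdash\alpha\imn\beta$, the rule $(\luklog_n 26)$, $\dfrac{\alpha\imn\beta}{\Delta\alpha\imn\Delta\beta}$, gives $\vdash\Delta\alpha\imn\Delta\beta$; symmetrically $\vdash\Delta\beta\imn\Delta\alpha$, so $\Delta\alpha\equiv\Delta\beta$. This is the step where the key new axioms $(\mathbf{Ax}6)$, $(\mathbf{Ax}7n)$ enter (via $(\luklog_n 26)$), and I expect it to be the main obstacle in the sense that it is the one congruence clause that is genuinely about the $\Delta$-fragment rather than being inherited from the $\{\iml\}$-fragment; everything about $\iml$ is essentially the standard argument for \L ukasiewicz-type logics using the derived transitivity and monotonicity rules for $\imn$. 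Putting the three compatibility facts together with the equivalence-relation facts establishes that $\equiv$ is a congruence on $\mathfrak{Fm}$.
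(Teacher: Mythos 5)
Your proof is correct and follows essentially the same route as the paper: reflexivity and transitivity via ($\luklog_n$14) and ($\luklog_n$18), compatibility with $\iml$ via the two forms of ($\luklog_n$19) with $k=n$ combined by transitivity, and compatibility with $\Delta$ via ($\luklog_n$26). No substantive differences to note.
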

\begin{proof} From the definition of $\equiv$, ($\luklog_n$14) and ($\luklog_n$18) we have that $\equiv$ is a equivalence relation on $Fn$. Besides, if $\alpha,\beta\in Fm$ are such that $\alpha\equiv\beta$ then, by ($\luklog_n$26), we have $\vdash_{\luklog_n} \Delta\alpha \imn \Delta\beta$ and therefore, we have \, $ \Delta\alpha \equiv \Delta\beta$. Besides, if $\gamma,\delta\in Fm$ are such that $\gamma\equiv\delta$ then, from $\vdash_{\luklog_n} \gamma \imn \delta$ and  ($\luklog_n$19) we have (1) $\vdash_{\luklog_n} (\beta \iml \gamma)\imn(\beta\iml \delta)$. On the other hand, from $\vdash_{\luklog_n} \alpha \imn \beta$  and  ($\luklog_n$19) we have (2) $\vdash_{\luklog_n} (\alpha\iml\gamma) \imn (\beta\iml\gamma)$. From (1), (2), ($\luklog_n$18), we get   $\vdash_{\luklog_n} (\alpha\iml\gamma) \imn (\beta\iml\delta)$. In a similar way, we prove that  $\vdash_{\luklog_n} (\beta\iml\delta) \imn(\alpha\iml\gamma)$. Therefore, \, $ (\alpha\iml\gamma) \equiv (\beta\iml\delta)$.
\end{proof}\\[2mm]
If $\alpha\in Fm$, we denote the class of $\alpha$ determine by $\equiv$ by $\overline{\alpha}$.

\begin{theorem} \label{linden}
The Lindenbaum-Tarski algebra $\mathfrak{Fm}/\equiv=\langle Fm/\equiv, \iml, \Delta, 1\rangle $ is a \lukres$^{\Delta}_n$-algebra  where $\overline{\alpha \iml \beta}  = \overline{\alpha} \iml \overline{\beta}$, $\overline{\Delta \alpha}=\Delta \overline{\alpha}$  and $1=\overline{\alpha \iml \alpha}=\{\phi\in Fm:\quad\vdash_{\luklog_n}\phi\}$. Moreover, the relation $\overline{\alpha} \leq \overline{\beta}$, defined by $\vdash_{\luklog_n} \alpha \rightarrow_{n} \beta$, is a partial order on $Fm/\equiv$. 
\end{theorem}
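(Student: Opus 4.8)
The plan is to fix, once and for all, the translation between algebraic identities and theoremhood, and then to check the defining identities of $\lukres^{\Delta}_n$ one by one. By the preceding lemma $\equiv$ is a congruence of $\mathfrak{Fm}$, so the operations $\overline{\alpha}\iml\overline{\beta}:=\overline{\alpha\iml\beta}$ and $\Delta\overline{\alpha}:=\overline{\Delta\alpha}$ are well defined. Moreover $\overline{\alpha\iml\alpha}$ does not depend on the choice of $\alpha$: each $\alpha\iml\alpha$ is a theorem by ($\luklog_n$5), and if $\vdash_{\luklog_n}\varphi$ and $\vdash_{\luklog_n}\psi$ then, using ({\bf Ax}1) and (MP), $\vdash_{\luklog_n}\psi\iml\varphi$, whence, iterating ({\bf Ax}1) and (MP), $\vdash_{\luklog_n}\psi\imn\varphi$ and, symmetrically, $\vdash_{\luklog_n}\varphi\imn\psi$, so $\varphi\equiv\psi$. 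Thus we may set $1:=\overline{\alpha\iml\alpha}$; and since in addition $\vdash_{\luklog_n}\theta$ together with $\theta\equiv\varphi$ give $\vdash_{\luklog_n}\varphi$ (via $\vdash_{\luklog_n}\theta\imn\varphi$ and ($k$WMP)), the class $1$ is exactly the set of theorems, so $\overline{\varphi}=1$ iff $\vdash_{\luklog_n}\varphi$.

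Next comes the bridge lemma: \emph{(i)} $\overline{\varphi}=\overline{\psi}$ iff $\vdash_{\luklog_n}\varphi\imn\psi$ and $\vdash_{\luklog_n}\psi\imn\varphi$ (this is just the definition of $\equiv$); and \emph{(ii)} if $\vdash_{\luklog_n}\varphi\iml\psi$ then $\vdash_{\luklog_n}\varphi\imn\psi$, which follows by an immediate induction on $k$ proving $\vdash_{\luklog_n}\varphi\rightarrow_{k}\psi$ for all $k\ge 1$, the inductive step being ({\bf Ax}1) plus (MP). Because every axiom scheme and every derived theorem quoted so far is schematic, to prove that an identity $s\approx t$ of $\lukres^{\Delta}_n$ holds in $\mathfrak{Fm}/\equiv$ it is enough, by \emph{(i)}, to exhibit the two formulas $s^{\ast}\imn t^{\ast}$ and $t^{\ast}\imn s^{\ast}$ as theorems (here $s^{\ast},t^{\ast}$ denote $s,t$ read as formulas, with the free generators replaced by variables); and by \emph{(ii)} it suffices to exhibit $s^{\ast}\iml t^{\ast}$ and $t^{\ast}\iml s^{\ast}$, or, when the identity reads $s\approx 1$, just the formula $s^{\ast}$ itself.

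With this dictionary the verification is routine. For the $\lukres_n$-fragment: (\L1), (\L2) and (\L4) are literally ({\bf Ax}1), ({\bf Ax}2) and ({\bf Ax}4); (\L3) follows from ({\bf Ax}3) applied in both directions; (\L5), that is $(\gamma\iml\gamma)\iml\alpha\approx\alpha$, uses ($\luklog_n$6) for one inequality and ({\bf Ax}1) for the other; and (\L6), that is $(\alpha\imn\beta)\vee\alpha\approx 1$, is precisely ({\bf Ax}5$n$) in the $\vee$-notation. Hence $\mathfrak{Fm}/\equiv\in\lukres_n$. For the $\Delta$-identities: ($\Delta$\L1), i.e. $\Delta\alpha\iml\beta\approx\alpha\imn\beta$, follows from ($\luklog_n$27) and ({\bf Ax}8$n$), one inclusion each; and ($\Delta$\L2), i.e. $\Delta(\Delta\alpha\iml\beta)\approx\Delta\alpha\iml\Delta\beta$, follows from ($\luklog_n$24) and ({\bf Ax}6). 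Therefore $\mathfrak{Fm}/\equiv$ is an $\lukres^{\Delta}_n$-algebra with the stated operations and top element.

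Finally, for the order: since $n\ge 2$ the formula $\alpha\rightarrow_{n}\beta$ is \emph{syntactically identical} to $\alpha\imn(\alpha\iml\beta)$, so ($\luklog_n$15) yields $\vdash_{\luklog_n}(\alpha\rightarrow_{n}\beta)\imn(\alpha\imn\beta)$ and its converse, and hence (applying ($k$WMP)) $\vdash_{\luklog_n}\alpha\rightarrow_{n}\beta$ iff $\vdash_{\luklog_n}\alpha\imn\beta$. Consequently the relation ``$\overline{\alpha}\le\overline{\beta}$ iff $\vdash_{\luklog_n}\alpha\rightarrow_{n}\beta$'' coincides with ``$\vdash_{\luklog_n}\alpha\imn\beta$'', which by \emph{(i)} and ($\luklog_n$18) is well defined on $\equiv$-classes, is reflexive by ($\luklog_n$14), transitive by ($\luklog_n$18), and antisymmetric since $\vdash_{\luklog_n}\alpha\imn\beta$ together with $\vdash_{\luklog_n}\beta\imn\alpha$ is by definition $\alpha\equiv\beta$, i.e. $\overline{\alpha}=\overline{\beta}$. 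The only steps that deserve attention are the bridge lemma \emph{(ii)} and the syntactic identification of $\alpha\rightarrow_{n}\beta$ with $\alpha\imn(\alpha\iml\beta)$; everything else is a direct appeal to the axioms and to the derived theorems listed above, so I expect no genuine obstacle.
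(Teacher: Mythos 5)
Your proof is correct and follows essentially the same route as the paper's: the $\lukres_n$-identities are obtained from ({\bf Ax}1)--({\bf Ax}5$n$) (together with the derived theorems), ($\Delta$\L 1) from ({\bf Ax}8$n$) plus ($\luklog_n$27), ($\Delta$\L 2) from ({\bf Ax}6) plus ($\luklog_n$24), and the order is verified directly, all resting on the preceding congruence lemma exactly as in the paper. Your additional bookkeeping --- the bridge between provable implications and quotient identities, and the observation that $\vdash_{\luklog_n}\alpha\rightarrow_{n}\beta$ iff $\vdash_{\luklog_n}\alpha\imn\beta$, which is what makes the relation defined via $\rightarrow_n$ antisymmetric with respect to $\equiv$ --- merely spells out what the paper dismisses as easy to check.
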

\begin{proof} It is easy to check that \, $\leq$ \, is a partial order on $ Fm/\equiv$. Besides, it is clear that  $\overline{\beta}\leq \overline{\alpha\rightarrow\alpha}=1$, for every $\alpha$ and $\beta$.\\
Let us verify that $\langle Fm/\equiv, \rightarrow, \Delta, 1\rangle $ is an  \lukres$^{\Delta}_n$-algebra. From ({\bf Ax}1)--({\bf Ax}5) we have that $\langle Fm/\equiv, \rightarrow, 1\rangle$ is a  \lukres$_n$-algebra. Besides from ({\bf Ax}8) and ($\luklog_n$27), on the one hand and, ({\bf Ax}6) and ($\luklog_n$24), on the other, we have that $\langle For/\equiv, \rightarrow, \Delta, 1\rangle $ verifies ($\Delta$\L 1) and ($\Delta$\L 2).
\end{proof}

\ 

\noindent Let $\A$ be an $\lukres^{\Delta}_n$-algebra. As usual, we call {\em valuation} (on $\A$) to any function $v:Fm\fun A$ which preserve the operations $\iml$ and $\Delta$. With a slight abuse of notation, we say that $v$ is a valuation if
$$v\in Hom_{\lukres^{\Delta}_n}\big( \mathfrak{Fm}, \A \big).$$
Let $\alpha \in Fm$, we say that $\alpha$ {\em is valid on $\A$} if $v(\alpha)=1$ for any valuation $v$ on $\A$; and we say that $\alpha$ is  {\em semantically valid}, denoted by \, $\vDash_{\lukres^{\Delta}_n}\alpha$, if $\alpha$ is valid on every $\lukres^{\Delta}_n$-algebra. Then,

\begin{theorem}\label{wat}{\rm (Weak Adequacy Theorem).}\label{WAT} Let $\alpha\in Fm$. Then, 
$$\vDash_{\luklog_n}\alpha \, \mbox{ if and only if } \, \vdash_{\lukres^{\Delta}_n}\alpha$$
\end{theorem}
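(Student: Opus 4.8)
The plan is to prove the two directions of the equivalence $\vDash_{\luklog_n}\alpha \iff \vdash_{\luklog_n}\alpha$ by the standard algebraic route. For \emph{soundness} ($\vdash_{\luklog_n}\alpha$ implies $\vDash_{\luklog_n}\alpha$), I would argue by induction on the length of a derivation of $\alpha$. First I would check that each axiom schema (\textbf{Ax}1)--(\textbf{Ax}8$n$) is valid on every $\lukres^{\Delta}_n$-algebra: for (\textbf{Ax}1)--(\textbf{Ax}5$n$) this is immediate from the defining identities (\L1)--(\L6) of an \lukres$_n$-algebra (recall $x\vee y=(x\iml y)\iml y$ and $z\leq 1$ for all $z$), and for (\textbf{Ax}6)--(\textbf{Ax}8$n$) it follows from the defining identities ($\Delta$\L1), ($\Delta$\L2) of \lukres$^{\Delta}_n$ together with ($\Delta$\L15) and ($\luklog_n$27)'s algebraic counterpart. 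Here ``valid'' means the corresponding term evaluates to $1$ under every homomorphism $v\in Hom_{\lukres^{\Delta}_n}(\mathfrak{Fm},\A)$, using that $\overline{\alpha}\leq\overline{\beta}$ iff $\alpha\rightarrow_n\beta$ takes value $1$, which in a \lukres$_n$-algebra is equivalent to $\alpha\iml\beta$ taking value $1$ by Remark~\ref{rem1}. Then I would verify that \emph{modus ponens} preserves validity: if $v(\alpha)=1$ and $v(\alpha\iml\beta)=1$ then $v(\beta)=1\iml v(\beta)=v(\beta)$ forces $v(\beta)=1$ by (\L5).

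For \emph{completeness} ($\vDash_{\luklog_n}\alpha$ implies $\vdash_{\luklog_n}\alpha$), I would use the Lindenbaum--Tarski algebra $\mathfrak{Fm}/\!\equiv$ constructed in Theorem~\ref{linden}, which is an \lukres$^{\Delta}_n$-algebra. Consider the canonical valuation $v_0:\mathfrak{Fm}\fun\mathfrak{Fm}/\!\equiv$ given by $v_0(\alpha)=\overline{\alpha}$; it is a $\lukres^{\Delta}_n$-homomorphism by the very definition of the operations on the quotient. If $\vDash_{\luklog_n}\alpha$, then in particular $\alpha$ is valid on $\mathfrak{Fm}/\!\equiv$, so $v_0(\alpha)=\overline{\alpha}=1$, which by Theorem~\ref{linden} means exactly $\vdash_{\luklog_n}\alpha$. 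This direction is essentially bookkeeping once the Lindenbaum--Tarski algebra is in place.

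The main obstacle is the soundness of the $\Delta$-axioms, specifically checking that (\textbf{Ax}7$n$), namely $\Delta(\Delta\alpha\iml\beta)\iml(\alpha\imn\Delta\beta)$, is valid: one must show in any \lukres$^{\Delta}_n$-algebra that $\Delta(\Delta x\iml y)\leq x\imn\Delta y$, equivalently $\Delta(\Delta x\iml y)\rightarrowtail(x\imn\Delta y)=1$. Using ($\Delta$\L1) this target is $\Delta x\rightarrowtail\Delta y$ on the right, and by ($\Delta$\L2) the left-hand side $\Delta(\Delta x\iml y)$ equals $\Delta x\rightarrowtail\Delta y$, so the inequality becomes $(\Delta x\rightarrowtail\Delta y)\rightarrowtail(\Delta x\rightarrowtail\Delta y)=1$, which is (\L10). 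The analogous reductions handle (\textbf{Ax}6) via ($\Delta$\L2) directly and (\textbf{Ax}8$n$) via ($\Delta$\L1) and Proposition~\ref{prop22}(ii). Once these identities are pinned down the proof is routine, so I would present the soundness half in full and dispatch completeness in one short paragraph.

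\begin{proof}
$(\Leftarrow)$ We prove soundness by induction on the length of a derivation. It suffices to check that every instance of (\textbf{Ax}1)--(\textbf{Ax}8$n$) is valid on an arbitrary $\lukres^{\Delta}_n$-algebra $\A$ and that (MP) preserves validity. Fix $v\in Hom_{\lukres^{\Delta}_n}(\mathfrak{Fm},\A)$ and write $x=v(\alpha)$, $y=v(\beta)$, $z=v(\gamma)$. For (\textbf{Ax}1)--(\textbf{Ax}4), validity is immediate from (\L1), (\L2), (\L3) and (\L4) respectively (recall $u\vee w=(u\iml w)\iml w$ and $w\leq 1$ for all $w\in A$). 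For (\textbf{Ax}5$n$), since $\A\in\lukres_n$ we have $(x\imn y)\vee x=1$ by (\L6), i.e. $((x\imn y)\iml x)\iml x=1$, so (\textbf{Ax}5$n$) is valid. For (\textbf{Ax}6), $v$ of the formula equals $(\Delta x\iml\Delta y)\iml\Delta(\Delta x\iml y)$, which is $1$ by ($\Delta$\L2) and (\L10). For (\textbf{Ax}7$n$), by ($\Delta$\L2) we have $\Delta(\Delta x\iml y)=\Delta x\iml\Delta y$, and by ($\Delta$\L1) we have $x\imn\Delta y=\Delta x\iml\Delta y$; hence $v$ of (\textbf{Ax}7$n$) equals $(\Delta x\iml\Delta y)\iml(\Delta x\iml\Delta y)=1$ by (\L10). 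For (\textbf{Ax}8$n$), by ($\Delta$\L1) we have $\Delta x\iml y=x\imn y$, so $v$ of (\textbf{Ax}8$n$) equals $(x\imn y)\iml(x\imn y)=1$. Finally, if $v(\alpha)=1$ and $v(\alpha\iml\beta)=1$, then $1\iml v(\beta)=1$, and by (\L5) this gives $v(\beta)=1$. Therefore every theorem of $\luklog_n$ is valid on every $\lukres^{\Delta}_n$-algebra, i.e. $\vdash_{\luklog_n}\alpha$ implies $\vDash_{\luklog_n}\alpha$.

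$(\Rightarrow)$ By Theorem \ref{linden}, $\mathfrak{Fm}/\!\equiv$ is an $\lukres^{\Delta}_n$-algebra and the quotient map $v_0:\mathfrak{Fm}\fun\mathfrak{Fm}/\!\equiv$, $v_0(\alpha)=\overline{\alpha}$, is a $\lukres^{\Delta}_n$-homomorphism, hence a valuation. If $\vDash_{\luklog_n}\alpha$, then in particular $\alpha$ is valid on $\mathfrak{Fm}/\!\equiv$, so $\overline{\alpha}=v_0(\alpha)=1$. By Theorem \ref{linden}, $1=\{\phi\in Fm:\ \vdash_{\luklog_n}\phi\}$, so $\vdash_{\luklog_n}\alpha$.
\end{proof}
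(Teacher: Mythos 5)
Your proposal is correct and follows the same route as the paper: soundness by induction on derivation length (the paper dismisses the axiom checks as routine; you carry them out via (\L1)--(\L6), ($\Delta$\L1), ($\Delta$\L2) and (\L10), and they are right), and completeness by evaluating in the Lindenbaum--Tarski algebra $\mathfrak{Fm}/\!\equiv$ with the natural projection as the canonical valuation, exactly as in Theorem \ref{linden} and the paper's argument. The only blemish is the parenthetical remark in your plan that $\alpha\rightarrow_n\beta$ taking value $1$ is equivalent to $\alpha\iml\beta$ taking value $1$ (false in general, e.g.\ in {\bf \L}$_3$ with $x=\frac{1}{2}$, $y=0$), but it plays no role in the actual proof.
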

\begin{proof}
({\bf Soundness part}): It is a routine task to check that every axiom of $\luklog_n$ is semantically valid; and that the rule (MP) preserves validity, that is, if $\vDash_{\luklog_n}\alpha$ and $\vDash_{\luklog_n}\alpha\iml\beta$, then $\vDash_{\luklog_n}\beta$. Then, the proof goes by induction on the length (number of steps) of the deduction of $\alpha$.\\[1mm]
({\bf Completeness part}): Let $\alpha\in Fm$ such that $\alpha$ is semantically valid. Then, for every  $\lukres^{\Delta}_n$-algebra $\A$ and every valuation $v:\mathfrak{Fm}\fun \A$ we have $v(\alpha)=1$.
In particular, if we take \, $\mathfrak{Fm}/\equiv$ \, and consider the natural projection $\pi:\mathfrak{Fm} \fun \mathfrak{Fm}/\equiv$ defined by $\pi(\gamma)=\overline{\gamma}$ (the class of $\gamma$ by $\equiv$), it is clear that $\pi$ is a valuation.  Therefore, $\pi(\alpha)=\overline{\alpha}=1=\{\beta \in Fm :\, \vdash_{\luklog_n} \beta\}$ and so $\vdash_{\luklog_n}\alpha$ as desired.
\end{proof}

\

\noindent We end this section showing the relation between the systems $\luklog_{n}$ for $n\geq2$.

\begin{lemma} For every $n\geq 1$, the logic $\luklog_{n+1}$ is a proper sub-logic of $\luklog_{n}$. 
\end{lemma}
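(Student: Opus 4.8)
The plan is to establish two things: that every theorem of $\luklog_{n+1}$ is a theorem of $\luklog_n$ (so $\luklog_{n+1}$ is a sub-logic), and that the containment is proper (there is a formula provable in $\luklog_n$ but not in $\luklog_{n+1}$). For the first inclusion, the only axiom schemata that depend on $n$ are ({\bf Ax}5$n$), ({\bf Ax}7$n$) and ({\bf Ax}8$n$), all of which mention the derived connective $\imn$; since $\alpha\rightarrow_{(n)}\beta = \alpha\iml(\alpha\rightarrow_{(n-1)}\beta)$ and, by ({\bf Ax}1) and (MP), $\alpha\rightarrow_{(n-1)}\beta \iml (\alpha\rightarrow_{(n)}\beta)$ is a theorem of the $\{\iml\}$-fragment, every instance of the $n{+}1$-versions of these axioms can be derived inside $\luklog_n$. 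Concretely, I would show in $\luklog_n$ that $\vdash (\alpha\rightarrow_{(n)}\beta)\iml(\alpha\rightarrow_{(n+1)}\beta)$ (immediate from ({\bf Ax}1), (MP) and ($\luklog_n$12)), and then use this together with the monotonicity/transitivity rules ($\luklog_n$2), ($\luklog_n$10), ($\luklog_n$10') and ($\luklog_n$25) to transform each $\luklog_{n+1}$-axiom instance into a $\luklog_n$-theorem; since (MP) is common to both calculi, induction on the length of a $\luklog_{n+1}$-derivation finishes this half.

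For properness, the cleanest route is semantic, via the Weak Adequacy Theorem \ref{WAT}. By Corollary \ref{lemsim}, the simple $\luklog_n$-algebras are exactly ${\bf \L}^{\Delta}_k$ for $k\leq n$, so a formula $\alpha$ is a theorem of $\luklog_n$ iff it is valid in every ${\bf \L}^{\Delta}_k$ with $k\leq n$, and a theorem of $\luklog_{n+1}$ iff it is valid in every ${\bf \L}^{\Delta}_k$ with $k\leq n+1$. Thus it suffices to exhibit a formula valid in ${\bf \L}^{\Delta}_k$ for all $k\leq n$ but refuted in ${\bf \L}^{\Delta}_{n+1}$. The natural candidate is the $\imn$-version of axiom ({\bf Ax}5$(n-1)$), that is, a formula built so that it says ``$(\alpha\rightarrow_{(n-1)}\beta)\vee\alpha = 1$'', equivalently the schema $((\alpha\rightarrow_{(n-1)}\beta)\iml\alpha)\iml\alpha$: by \L6 this holds in every $\lukres_{n-1}$-algebra, hence in ${\bf \L}^{\Delta}_k$ for $k\leq n$ (recall from the Remarks that ${\bf \L}_k$ is a $\lukres_{n-1}$-algebra precisely when $k\leq n-1$... here one takes the version tailored to $k\le n$), but it fails in ${\bf \L}^{\Delta}_{n+1}$ by choosing the valuation $\alpha\mapsto \tfrac{n-1}{n}$, $\beta\mapsto 0$, for which $\alpha\rightarrow_{(n-1)}\alpha\mapsto$ a value $<1$ witnessing $\L6$'s failure at order $n$.

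I would carry this out in the order: (1) prove the auxiliary $\{\iml\}$-fragment fact $\vdash_{\luklog_n}(\alpha\rightarrow_{(n)}\beta)\iml(\alpha\rightarrow_{(n+1)}\beta)$; (2) derive each $\luklog_{n+1}$-axiom in $\luklog_n$ and conclude $\mathrm{Thm}(\luklog_{n+1})\subseteq\mathrm{Thm}(\luklog_n)$ by induction on derivation length; (3) identify the separating formula and, using Corollary \ref{lemsim} and Theorem \ref{WAT}, verify it is a $\luklog_n$-theorem; (4) exhibit the refuting valuation on ${\bf \L}^{\Delta}_{n+1}$ to conclude it is not a $\luklog_{n+1}$-theorem. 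The main obstacle is step (2): one must be careful that pushing the inequality $\alpha\rightarrow_{(n)}\beta \leq \alpha\rightarrow_{(n+1)}\beta$ through the nested occurrences of $\imn$ inside ({\bf Ax}7$n$) and ({\bf Ax}8$n$) — which have $\imn$ in both antecedent and consequent positions, and inside the scope of $\Delta$ — really does yield the $n{+}1$-labelled axioms as $\luklog_n$-theorems; this needs the monotonicity lemmas in both the covariant and contravariant positions ($\luklog_n$10), ($\luklog_n$10'), ($\luklog_n$12), ($\luklog_n$19)) and the $\Delta$-monotonicity rule ($\luklog_n$25), applied in the right combination. Everything else is routine book-keeping or a direct appeal to results already established.
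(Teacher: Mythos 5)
Your overall route is the same as the paper's (derive the $(n{+}1)$-indexed axiom instances inside $\luklog_n$, then separate the two logics semantically in $\mbox{\bf \L}^{\Delta}_{n+1}$ via Theorem \ref{wat}), and your properness half is essentially the paper's argument: the separating formula is exactly axiom ({\bf Ax}5$n$), i.e.\ $((\alpha\rightarrow_{n-1}\beta)\iml\alpha)\iml\alpha$ --- so it is a $\luklog_n$-theorem outright and no appeal to Corollary \ref{lemsim} is needed for that half --- and the valuation $\alpha\mapsto\frac{n-1}{n}$, $\beta\mapsto 0$ in $\mbox{\bf \L}^{\Delta}_{n+1}$ refutes it, just as in the paper (your labels ``({\bf Ax}5$(n-1)$)'' and ``$\lukres_{n-1}$-algebra'' are off by one, but the formula and the witness are correct).

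The genuine gap is in your step (2), at axiom ({\bf Ax}8$(n+1)$): $(\alpha\rightarrow_{n}\beta)\iml(\Delta\alpha\iml\beta)$. Here $\rightarrow_{n}$ sits in the \emph{antecedent}, and pushing your only auxiliary fact --- the upward implication $(\alpha\rightarrow_{n-1}\beta)\iml(\alpha\rightarrow_{n}\beta)$, which indeed follows from ({\bf Ax}1) --- through that contravariant position via ($\luklog_n$10') yields $\big((\alpha\rightarrow_{n}\beta)\iml(\Delta\alpha\iml\beta)\big)\iml\big((\alpha\rightarrow_{n-1}\beta)\iml(\Delta\alpha\iml\beta)\big)$, an implication pointing the wrong way; no combination of the monotonicity rules you list can recover ({\bf Ax}8$(n+1)$) from the upward implication alone. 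What is needed, and what the paper supplies, is the \emph{converse} $(\alpha\rightarrow_{n}\beta)\iml(\alpha\rightarrow_{n-1}\beta)$, which is not a fact of the pure $\{\iml\}$-fragment but uses the $n$-valuedness axiom: instantiate ({\bf Ax}3) as $\big(((\alpha\rightarrow_{n-1}\beta)\iml\alpha)\iml\alpha\big)\iml\big((\alpha\iml(\alpha\rightarrow_{n-1}\beta))\iml(\alpha\rightarrow_{n-1}\beta)\big)$ and detach with ({\bf Ax}5$n$). With both implications in hand, ({\bf Ax}8$(n+1)$) follows from ({\bf Ax}8$n$) by ($\luklog_n$10') and (MP). Note also that ({\bf Ax}7$n$) has $\imn$ only in the consequent (covariant) position, so there the upward implication and ($\luklog_n$2) already suffice, and ($\luklog_n$25) is not needed anywhere. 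You flagged the contravariant occurrence as ``the main obstacle,'' but as written your toolkit does not resolve it; the missing key step is precisely the ({\bf Ax}5$n$)$+$({\bf Ax}3) derivation above.
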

\begin{proof} All the theorems of $\luklog_{n+1}$ are also theorems of $\luklog_{n}$. Indeed, axioms ({\bf Ax}1)--({\bf Ax}4) hold in both systems. Besides, from ({\bf Ax}1) we have that $(\alpha \rightarrow_{n-1} \beta)\iml(\alpha \rightarrow_{n} \beta)$; and applying  ($\luklog_n$10') twice we get 
$$\big(((\alpha \rightarrow_{n-1} \beta)\iml\alpha)\iml\alpha\big)\iml\big(((\alpha \rightarrow_{n} \beta)\iml\alpha)\iml\alpha\big)$$
That is, from ({\bf Ax}5$n$) and (MP) we have ({\bf Ax}5$(n+1)$). Analogously, from  ({\bf Ax}7$n$) we obtain ({\bf Ax}7$(n+1)$).
On the other hand,\\[2mm]
\begin{tabular}{ll|r}
1. & $((\alpha \rightarrow_{n-1} \beta)\iml\alpha)\iml\alpha$ & ({\bf Ax}5$n$) \\
2. & $\big(((\alpha \rightarrow_{n-1} \beta)\iml\alpha) \iml\alpha\big)\iml\big((\alpha\iml(\alpha \rightarrow_{n-1} \beta))\iml(\alpha \rightarrow_{n-1} \beta)\big)$ & ({\bf Ax}3) \\
3. & $(\alpha\iml(\alpha \rightarrow_{n-1} \beta))\iml(\alpha \rightarrow_{n-1} \beta)$ & 1, 2 and (MP)   \\
3. & $(\alpha \rightarrow_{n} \beta)\iml(\alpha \rightarrow_{n-1} \beta)$ &    \\
\end{tabular}\\[3mm]
that is, in $\luklog_{n}$ it holds $(\alpha \rightarrow_{n} \beta)\iml(\alpha \rightarrow_{n-1} \beta)$, and using ({\bf Ax}8$n$) and (MP), we obtain ({\bf Ax}8$(n+1)$).\\[2mm]
Finally, consider the $\lukres^{\Delta}_{n+1}$-algebra {\bf \L}$^{\Delta}_{n+1}$. There, we have that 
$\frac{n-1}{n} \rightarrow_{n-1} 0 = \frac{n-1}{n}$ and so $(\frac{n-1}{n} \rightarrow_{n-1} 0)\vee \frac{n-1}{n} \not=1$. That is, the equation $(x\rightarrow_{n-1} y)\vee x \approx 1$ does not hold in the class $\lukres^{\Delta}_{n+1}$. By Theorem \ref{wat}, we have that $((\alpha\rightarrow_{n-1} \beta)\iml\alpha)\iml\alpha$ (axiom ({\bf Ax}5$n$) is not a theorem of $\luklog_{n+1}$.
\end{proof}

\

\noindent Therefore, we have the next hierarchy of finite-valued super-{\L}ukasiewicz logics expanded by $\Delta$.

\

$$ \cdots \hspace{.6cm} \luklog_{n+1} \, \subset \, \luklog_{n} \, \subset \, \luklog_{n-1} \hspace{.6cm}  \cdots  \hspace{.6cm}  \subset \, \luklog_{3} \, \subset \,   \luklog_{2}$$

\

\subsection{Strong version of the Completeness Theorem}\label{sectarsk}

We shall prove the strong completeness theorem for the systems $\luklog_n$ using a general technique developed in \cite{FOS1}.
Recall that  a logic  defined over a propositional  language ${\cal S}$ is a system $\mathcal{L}=\langle Fm, \vdash_\mathcal{L}\rangle$ where $Fm$ is the set of formulas over ${\cal S}$ and the relation  $\vdash_{\mathcal{L}}$ is a subset of  $\wp (Fm) \times Fm$ where $\wp(A)$ is the set of all subsets of $A$. The logic $\mathcal{L}$ is said to be  {\em Tarskian} if it satisfies the following properties, for every set $\Gamma\cup\Omega\cup\{\varphi,\beta\}$ of formulas:
\begin{itemize}	
  \item [\rm (1)] if $\alpha\in\Gamma$, then $\Gamma\vdash_{\mathcal{L}}\alpha$, \hfill{(reflexivity)}
  \item [\rm (2)] if $\Gamma\vdash_{\mathcal{L}}\alpha$ and $\Gamma\subseteq\Omega$, then $\Omega\vdash_{\mathcal{L}}\alpha$, \hfill{(monotonicity)}
  \item [\rm (3)] if $\Omega\vdash_{\mathcal{L}}\alpha$ and $\Gamma\vdash_{\mathcal{L}}\beta$ for every $\beta\in\Omega$, then $\Gamma\vdash_{\mathcal{L}}\alpha$.\hfill{(cut)}
\end{itemize}

\noindent A logic $\mathcal{L}$ is said to be {\em finitary}  (or {\em compact}) if it satisfies the following:

\begin{itemize}
  \item [\rm (4)] if $\Gamma\vdash_{\mathcal{L}}\alpha$, then there exists a finite subset $\Gamma_0$ of $\Gamma$ such that $\Gamma_0\vdash_{\mathcal{L}}\alpha$.\hfill {(compactness)}
\end{itemize}

 \noindent  The following condition is to add the {\em structurality} to a Tarskian logic:

\begin{itemize}	
  \item [\rm (5)]  if $\Gamma\vdash_{\mathcal{L}}\alpha$, then $\sigma[\Gamma] \vdash_{\mathcal{L}} \sigma(\alpha)$ for each $\mathcal{L}$-substitution $\sigma$; \hfill{(structurality)}
\end{itemize}

\noindent in this way, we obtain what is known as {\em deductive system}.
\begin{defi} \label{maxi} Let $\mathcal{L}$ be a Tarskian logic and let $\Gamma $ be a set of formulas. We say that every set of formulas is a theory. Moreover,  $\Gamma$ is said to be a consistent theory if there is a formula $\varphi$ such that $\Gamma\not\vdash_{\mathcal{L}}\varphi$. Furthermore, we say that $\Gamma$ is a maximal consistent theory if  $\Gamma,\psi\vdash_{\mathcal{L}}\varphi$ for any formula $\psi\notin\Gamma$; and, in this case, we say $\Gamma$ is maximal respect to $\varphi$.
\end{defi}

A set of formulas $\Gamma$ is closed in $\mathcal{L}$ if the following property holds for every formula $\varphi$: $\Gamma\vdash_{\mathcal{L}}\varphi$ if and only if $\varphi\in\Gamma$. It is easy to see that any maximal consistent theory is a closed one.

\begin{lem}[Lindenbaum-\L o\'s] \label{exmaxnotr} Let $\mathcal{L}$ be a Tarskian and finitary logic. Let $\Gamma\cup\{\varphi\}$ be a set of formulas  such that $\Gamma\not\vdash_{\mathcal{L}}\varphi$. Then, there exists a set of formulas $\Omega$ such that $\Gamma\subseteq\Omega$ with $\Omega$ being maximal consistent theory with respect to the formula $\varphi$ in $\mathcal{L}$.
\end{lem}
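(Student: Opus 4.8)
The statement to prove is the Lindenbaum--Łoś lemma for a Tarskian, finitary logic: given $\Gamma\not\vdash_{\mathcal{L}}\varphi$, there is a maximal (with respect to $\varphi$) consistent theory $\Omega\supseteq\Gamma$. This is a completely standard Zorn's lemma argument, so my plan is to carry it out in the usual way, adapted to the notion of ``maximal with respect to $\varphi$'' used in Definition~\ref{maxi}.

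\medskip

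\noindent\textbf{Plan.} The plan is to apply Zorn's lemma to the family
$$\mathcal{A}=\{\Delta\subseteq Fm : \Gamma\subseteq\Delta \text{ and } \Delta\not\vdash_{\mathcal{L}}\varphi\},$$
ordered by inclusion. First I would observe $\mathcal{A}\neq\emptyset$ since $\Gamma\in\mathcal{A}$ by hypothesis. Next, I would show every chain $\{\Delta_i\}_{i\in I}$ in $\mathcal{A}$ has an upper bound in $\mathcal{A}$: take $\Delta^*=\bigcup_{i\in I}\Delta_i$. Clearly $\Gamma\subseteq\Delta^*$ and $\Delta^*$ contains each $\Delta_i$, so the only thing to check is $\Delta^*\not\vdash_{\mathcal{L}}\varphi$. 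Here is where finitariness (property~(4)) is essential: if $\Delta^*\vdash_{\mathcal{L}}\varphi$, then by compactness there is a finite $\Delta_0\subseteq\Delta^*$ with $\Delta_0\vdash_{\mathcal{L}}\varphi$; since $\Delta_0$ is finite and the $\Delta_i$ form a chain, $\Delta_0\subseteq\Delta_j$ for some single index $j$, and then by monotonicity~(2) $\Delta_j\vdash_{\mathcal{L}}\varphi$, contradicting $\Delta_j\in\mathcal{A}$. Hence $\Delta^*\in\mathcal{A}$ is the desired upper bound. By Zorn's lemma $\mathcal{A}$ has a maximal element $\Omega$.

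\medskip

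\noindent\textbf{Verifying $\Omega$ has the required properties.} By construction $\Gamma\subseteq\Omega$ and $\Omega\not\vdash_{\mathcal{L}}\varphi$, so $\Omega$ is a consistent theory (taking $\varphi$ itself as the witness formula in Definition~\ref{maxi}). It remains to check maximality in the sense of Definition~\ref{maxi}: for every $\psi\notin\Omega$ we need $\Omega,\psi\vdash_{\mathcal{L}}\varphi$. Indeed, if $\psi\notin\Omega$ then $\Omega\cup\{\psi\}\supsetneq\Omega$, so by maximality of $\Omega$ in $\mathcal{A}$ we must have $\Omega\cup\{\psi\}\notin\mathcal{A}$; since $\Gamma\subseteq\Omega\cup\{\psi\}$ trivially, the failure must be $\Omega\cup\{\psi\}\vdash_{\mathcal{L}}\varphi$, i.e.\ $\Omega,\psi\vdash_{\mathcal{L}}\varphi$. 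Thus $\Omega$ is maximal with respect to $\varphi$, completing the proof.

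\medskip

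\noindent\textbf{Main obstacle.} There is no deep obstacle here; this is the classical Lindenbaum construction. The one point requiring care---and the only place the hypotheses beyond ``Tarskian'' are used---is the chain condition, where finitariness (compactness) is exactly what guarantees the union of a chain of theories not proving $\varphi$ still does not prove $\varphi$. Reflexivity~(1) and cut~(3) are not needed for this particular lemma; only monotonicity and compactness are. I would state the proof concisely, emphasizing the Zorn-plus-compactness step and the translation of ``$\Omega$ maximal in $\mathcal{A}$'' into ``$\Omega$ maximal with respect to $\varphi$'' as in Definition~\ref{maxi}.
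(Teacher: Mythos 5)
Your proof is correct and is the standard Zorn's-lemma-plus-compactness argument; the paper itself gives no proof, deferring to W\'ojcicki's Theorem 2.22, whose proof is essentially the same construction you carry out. The only (minor) point you gloss is the empty chain, which is handled by noting $\Gamma$ itself lies in $\mathcal{A}$ and serves as its upper bound; otherwise your verification that the Zorn-maximal element satisfies the paper's notion of ``maximal with respect to $\varphi$'' is exactly right.
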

\begin{proof}
The proof can be found in \cite[Theorem 2.22]{W}.
\end{proof}
 
 \

Let $\Gamma\cup\{\alpha\}$ be a set of formulas.  We say that $\alpha$ is $\luklog_n$-consequence of $\Gamma$ (in the system $\luklog_n$) and write $\Gamma\vdash_{\luklog_n}\alpha$  if for every valuation $v$ it holds:

$$ v(\gamma)=1, \mbox{ for every } \gamma\in\Gamma, \, \mbox{ implies } \, v(\alpha)=1.$$

\noindent Now, for a given maximal theory $\Gamma$ with respect to $\varphi$, we denote by $\Gamma/\equiv$ the set $\{\overline{\alpha}:\alpha\in\Gamma\}$. It is clear that $\Gamma/\equiv$ is a subset of the $\lukres^{\Delta}_n$-algebra  $\mathfrak{Fm}/\equiv$. Then,

\begin{lem}\label{aux}  Let $\Gamma\cup\{\varphi\}\subseteq\mathfrak{Fm}$, with $\Gamma$ being a non-trivial maximal respect to $\varphi$ in $\luklog_n$. Then:

\begin{itemize}
\item[\rm (i)] if $\alpha\in\Gamma$ and $\overline{\alpha}=\overline{\beta}$, then $\beta\in\Gamma$;

\item[\rm (ii)] $\Gamma/\equiv$ is an  implicative filter tied to $\overline{\varphi}$ of $\mathfrak{Fm}/\equiv$.
\end{itemize}
 \end{lem}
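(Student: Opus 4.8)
The plan is to verify both items directly from the definitions, using the derived theorems and rules established in the previous lemmas together with the properties of a Tarskian, finitary logic. For \textbf{(i)}, suppose $\alpha \in \Gamma$ and $\overline{\alpha} = \overline{\beta}$. By definition of $\equiv$ this means $\vdash_{\luklog_n} \alpha \imn \beta$; since $\Gamma$ is a theory it is in particular closed under $\vdash_{\luklog_n}$ (as $\Gamma$ is maximal consistent, hence closed), and as $\alpha \in \Gamma$, monotonicity and cut give $\Gamma \vdash_{\luklog_n} \alpha$ and $\Gamma \vdash_{\luklog_n} \alpha \imn \beta$. Then repeated application of the derived rule ($n$WMP) (the $n$-weak modus ponens rule, which is derivable in $\luklog_n$) yields $\Gamma \vdash_{\luklog_n} \beta$, and since $\Gamma$ is closed, $\beta \in \Gamma$. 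First I would make explicit that a maximal consistent theory is closed, which is remarked just before Lemma \ref{exmaxnotr}.

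For \textbf{(ii)}, I would check the three conditions: that $\Gamma/\!\equiv$ is an implicative filter of $\mathfrak{Fm}/\!\equiv$, that $\overline{\varphi} \notin \Gamma/\!\equiv$, and that it is tied to $\overline{\varphi}$ in the sense of Definition \ref{Ligado}. That $1 = \overline{\alpha \iml \alpha} \in \Gamma/\!\equiv$ follows since $\alpha \iml \alpha$ is a theorem, hence in $\Gamma$. Closure under modus ponens: if $\overline{\alpha}, \overline{\alpha \iml \beta} \in \Gamma/\!\equiv$, then by (i) we may assume $\alpha, \alpha \iml \beta \in \Gamma$ (replacing representatives if necessary), and applying (MP) inside $\Gamma$ gives $\beta \in \Gamma$, so $\overline{\beta} \in \Gamma/\!\equiv$. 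That $\overline{\varphi} \notin \Gamma/\!\equiv$: if it were, then by (i) $\varphi \in \Gamma$, contradicting $\Gamma \not\vdash_{\luklog_n} \varphi$ together with reflexivity. For the "tied to" property: let $D'$ be an implicative filter of $\mathfrak{Fm}/\!\equiv$ with $\Gamma/\!\equiv \subsetneq D'$; pick $\overline{\psi} \in D' \setminus (\Gamma/\!\equiv)$. Then $\psi \notin \Gamma$, so by maximality of $\Gamma$ with respect to $\varphi$ we have $\Gamma, \psi \vdash_{\luklog_n} \varphi$; by finitarity this uses finitely many hypotheses from $\Gamma$, and pushing this derivation through the quotient (using that $\Gamma/\!\equiv \cup \{\overline{\psi}\} \subseteq D'$ and $D'$ is closed under $k$-weak modus ponens, property (IF)) gives $\overline{\varphi} \in D'$, as required.

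The main obstacle I anticipate is the faithful transfer of a syntactic derivation $\Gamma, \psi \vdash_{\luklog_n} \varphi$ into the algebra $\mathfrak{Fm}/\!\equiv$: one must argue that each modus ponens step in the object-level derivation corresponds to an application of $k$-weak modus ponens among the images of the formulas, so that the closure property (IF) of $D'$ can be invoked step by step. This is where the derived rule ($n$WMP), property (IF), and the fact that $\overline{\alpha \iml \beta} = \overline{\alpha} \iml \overline{\beta}$ (from Theorem \ref{linden}) all come together; the bookkeeping is routine but is the only place where care is genuinely needed. Everything else reduces to the observation that $\Gamma$ is closed and to item (i).
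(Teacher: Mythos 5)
Your argument is correct, but it is worth noting how it relates to what the paper actually does: the paper proves this lemma simply by citing the general result \cite[Theorem 4.6]{FOS1} on Monteiro-style maximal theories for Tarskian logics, whereas you give a direct, self-contained verification inside $\luklog_n$. Your route buys independence from the external reference at the cost of redoing (in this special case) the bookkeeping that \cite{FOS1} packages once and for all; the paper's route buys brevity and uniformity across all the logics treated there. On the details of your verification: item (i) is fine (one application of the derived rule $(n{-}1)$WMP, together with closedness of a maximal theory, suffices), and in item (ii) the filter conditions and $\overline{\varphi}\notin\Gamma/\!\equiv$ are handled correctly via (i). In the ``tied to'' step, two simplifications are available: the appeal to finitarity is not needed, since any derivation witnessing $\Gamma,\psi\vdash_{\luklog_n}\varphi$ is already a finite sequence and you can induct on it directly; and since (MP) is the only primitive rule of $\luklog_n$, plain closure of $D'$ under modus ponens is all you need when transferring the derivation to the quotient --- the $k$-weak modus ponens/(IF) machinery you worry about never has to be invoked. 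The one point you should state explicitly in that induction is that every theorem (in particular every axiom instance occurring in the derivation) has image $1=\{\phi\in Fm:\ \vdash_{\luklog_n}\phi\}$ in $\mathfrak{Fm}/\!\equiv$ (Theorem \ref{linden}), hence lies in $D'$; with that observation, each line of the derivation maps into $D'$, giving $\overline{\varphi}\in D'$ as required by Definition \ref{Ligado}.
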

\begin{dem} See \cite[Theorem 4.6.]{FOS1}.
\end{dem}

\begin{theo}\label{morf} Let $\Gamma\cup\{\varphi\}\subseteq\mathfrak{Fm}$, with $\Gamma$ non-trivial maximal respect to $\varphi$ in $\luklog_n$. Then, there is a  map $v:\mathfrak{Fm} \to  \text{{\bf \L}}^{\Delta}_{n}$ which is an  homomorphism 
\end{theo}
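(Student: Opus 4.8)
The plan is to build the homomorphism $v$ from the maximal theory $\Gamma$ by first passing to the Lindenbaum–Tarski algebra and then applying the representation results established earlier in the paper. First I would use Lemma~\ref{aux}(ii): since $\Gamma$ is non-trivial and maximal with respect to $\varphi$, the set $\Gamma/\equiv$ is an implicative filter of $\mathfrak{Fm}/\equiv$ tied to the element $\overline{\varphi}$. By Corollary~\ref{maximal}, an implicative filter tied to an element is maximal, so $M:=\Gamma/\equiv$ is a maximal implicative filter of the $\lukres^{\Delta}_n$-algebra $\mathfrak{Fm}/\equiv$.

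Next I would invoke the structure theory of maximal filters. By Theorem~\ref{teoMj} (or equivalently Corollary~\ref{lemsim} together with the quotient construction), the quotient $(\mathfrak{Fm}/\equiv)/M$ embeds into ${\bf \L}^{\Delta}_{n}$; more precisely, Theorem~\ref{teoMj} produces a homomorphism $h:\mathfrak{Fm}/\equiv \to {\bf \L}^{\Delta}_n$ with $h^{-1}(\{1\}) = M$. Composing $h$ with the natural projection $\pi:\mathfrak{Fm}\to\mathfrak{Fm}/\equiv$, $\pi(\gamma)=\overline{\gamma}$, which is a homomorphism by Theorem~\ref{linden}, yields the desired map
$$v := h\circ\pi : \mathfrak{Fm}\to {\bf \L}^{\Delta}_n,$$
which is a homomorphism as a composition of homomorphisms. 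Thus the statement follows.

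For completeness of the argument I would also record that this $v$ has the expected separating property, namely $v(\alpha)=1$ iff $\overline{\alpha}\in M$ iff $\alpha\in\Gamma$ (using Lemma~\ref{aux}(i) for the last equivalence), so in particular $v(\gamma)=1$ for all $\gamma\in\Gamma$ while $v(\varphi)\neq1$; this is exactly what makes $v$ useful for the strong completeness theorem that this theorem is a lemma toward. The main obstacle — and the only place where real content is used — is ensuring that Theorem~\ref{teoMj}/Corollary~\ref{lemsim} genuinely applies to the maximal filter $M$ arising from $\Gamma/\equiv$; that is, one must check that $\mathfrak{Fm}/\equiv$ is an honest $\lukres^{\Delta}_n$-algebra (Theorem~\ref{linden}) and that ``implicative filter tied to $\overline{\varphi}$'' indeed coincides with ``maximal implicative filter'' (Corollary~\ref{maximal}), so that the semisimplicity machinery of Theorem~\ref{teoavf} and the explicit homomorphism of Theorem~\ref{teoMj} are available. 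Everything else is a routine composition of already-established maps.
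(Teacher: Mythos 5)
Your proposal is correct and follows essentially the same route as the paper: pass to the Lindenbaum--Tarski algebra, use Lemma~\ref{aux} together with Corollary~\ref{maximal} to see that $M=\Gamma/\equiv$ is a maximal implicative filter of $\mathfrak{Fm}/\equiv$, and then obtain the homomorphism into $\mbox{{\bf \L}}^{\Delta}_{n}$ from the structure theory of maximal filters (Theorem~\ref{teoMj}, Corollary~\ref{lemsim}), composed with the canonical projection $\pi$ of Theorem~\ref{linden}. Your write-up is in fact more explicit than the paper's rather terse argument, which presents the same map by factoring through the simple quotient $\mathfrak{Fm}/\Gamma$.
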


\begin{dem} Firstly, let us  consider $A:= \mathfrak{Fm}/\equiv$ and $M:=\Gamma/\equiv$. So, it clear that $A/M$ is a simple  $\mbox{\lukres}^{\Delta}_n$-algebra in virtue of Lemma \ref{aux} and Corollary \ref{maximal}. It is not hard to see that $A$ is isomorphic to $\mathfrak{Fm}/\Gamma$. Therefore, there is $\pi: \mathfrak{Fm} \fun \mathfrak{Fm}/\Gamma$  the canonical homomorphism, where $\mathfrak{Fm}/\Gamma$ is the quotient set determined by the congruence $\equiv_{\Gamma}$, which is defined by  $$\alpha\equiv_\Gamma\beta \, \mbox{ \, iff \, } \Gamma \vdash_{\luklog_n} \alpha \imn \beta \mbox{ and }\, \Gamma\vdash_{\luklog_n} \beta \imn \alpha.$$    
\end{dem}

\

\begin{theo}\label{compP} Let $\Gamma\cup\{\varphi\}\subseteq\mathfrak{Fm}$, $\Gamma\vdash\varphi$ if and only if $\Gamma\vDash\varphi$.
\end{theo}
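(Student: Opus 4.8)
The plan is to establish the two directions separately: the soundness half is a routine induction, and the completeness half is where the machinery of Lemmas~\ref{exmaxnotr} and~\ref{aux} together with Theorem~\ref{morf} does the work.

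\textbf{Soundness.} First I would show that $\Gamma\vdash_{\luklog_n}\varphi$ implies $\Gamma\vDash_{\luklog_n}\varphi$ by induction on the length of a derivation of $\varphi$ from $\Gamma$. Fix an arbitrary $\lukres^{\Delta}_n$-algebra $\A$ and a valuation $v$ on $\A$ with $v(\gamma)=1$ for every $\gamma\in\Gamma$. If $\varphi\in\Gamma$ there is nothing to prove; if $\varphi$ is an instance of one of ({\bf Ax}1)--({\bf Ax}8$n$) then $v(\varphi)=1$ by the soundness part of Theorem~\ref{wat}; and if $\varphi$ is obtained by (MP) from $\psi$ and $\psi\iml\varphi$, then the inductive hypothesis gives $v(\psi)=v(\psi\iml\varphi)=1$, so $v(\varphi)=1$ because in any $\lukres^{\Delta}_n$-algebra $a=1$ and $a\iml b=1$ force $b=1$.

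\textbf{Completeness.} For the converse I would argue by contraposition. Assume $\Gamma\not\vdash_{\luklog_n}\varphi$. Since $\luklog_n$ is a Hilbert-style calculus whose only rule (MP) is finitary, it is a Tarskian and finitary logic, so Lemma~\ref{exmaxnotr} yields a theory $\Omega\supseteq\Gamma$ that is maximal consistent with respect to $\varphi$; in particular $\Omega$ is closed and $\Omega\not\vdash_{\luklog_n}\varphi$, hence $\varphi\notin\Omega$. By Lemma~\ref{aux}, the set $M:=\Omega/\equiv$ is an implicative filter of $A:=\mathfrak{Fm}/\equiv$ tied to $\overline{\varphi}$, and by Lemma~\ref{aux}(i) together with the closedness of $\Omega$ one has $\overline{\psi}\in M$ if and only if $\psi\in\Omega$; in particular $\overline{\varphi}\notin M$. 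Theorem~\ref{morf} then provides a homomorphism $v:\mathfrak{Fm}\to\mbox{{\bf \L}}^{\Delta}_{n}$, i.e.\ a valuation, arising as the composite of the canonical projection $\mathfrak{Fm}\to\mathfrak{Fm}/\Omega\cong A/M$ with the embedding of the simple algebra $A/M$ into $\mbox{{\bf \L}}^{\Delta}_{n}$ furnished by Corollary~\ref{maximal} and Corollary~\ref{lemsim}; tracing this composite shows $v^{-1}(\{1\})=\Omega$. Therefore $v(\gamma)=1$ for every $\gamma\in\Gamma\subseteq\Omega$ while $v(\varphi)\neq 1$ since $\varphi\notin\Omega$, which witnesses $\Gamma\not\vDash_{\luklog_n}\varphi$ and completes the contrapositive.

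\textbf{Main obstacle.} The delicate step is not soundness but checking that the valuation delivered by Theorem~\ref{morf} genuinely separates $\Gamma$ from $\varphi$, i.e.\ that $v^{-1}(\{1\})$ is exactly $\Omega$ and that $A/M$ is simple. This is precisely where the correspondence (Lemma~\ref{aux}, Corollary~\ref{maximal}) between theories maximal with respect to $\varphi$ and implicative filters tied to $\overline{\varphi}$, and the classification of the simple $\lukres^{\Delta}_n$-algebras as the $\mbox{{\bf \L}}^{\Delta}_{k}$ with $k\leq n$ (Corollary~\ref{lemsim}), do the real work; everything else is bookkeeping.
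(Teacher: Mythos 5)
Your proof is correct and follows essentially the same route as the paper: the paper's (very terse) proof simply cites Theorem~\ref{WAT} for soundness and Theorem~\ref{morf} for completeness, and your argument is exactly the expansion of that citation, using the Lindenbaum--\L o\'s Lemma~\ref{exmaxnotr}, Lemma~\ref{aux}, Corollary~\ref{maximal} and Corollary~\ref{lemsim} — precisely the machinery the paper develops for this purpose. Your explicit verification that $v^{-1}(\{1\})=\Omega$ is a detail the paper leaves implicit, and it is a worthwhile addition.
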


\begin{dem} The proof immediately follows  from Theorem \ref{WAT} and Theorem \ref{morf}.
\end{dem}

\

 \subsection{First-order version of $\luklog_n$ with equality: the logic $\forall\luklog_n$ }

The first-order logic of $\forall$$\luklog_n$   will be introduced in this section.  Let  $\Theta$  be the first-order signature $\langle\P,\F,\C \rangle$, where $\P$ denotes a non-empty set of predicate symbols, $\F$ is a set of function symbols and $\C$ denotes a set  of individual constants. Let $\mathscr{L}(\Theta)$ be the first-order language induced by $\Theta$ containing the propositional signature of  $\luklog_n$, as well as two quantifier symbols $\forall$  and $\exists$, together with  the punctuation marks, commas and parentheses.

Now, consider a denumerable set  $Var$ of individual variables. The notions of bound and free variables, closed terms, sentences, and substitutability are the usual. We denote by $\mathfrak{Fm}_\Sigma$ the set  of all well-formed formulas (wff for short) and denote by $Ter$ the absolutely free algebra of the terms.  By $\varphi(x/t)$, we denote the formula that results from $\varphi$ by replacing simultaneously all the free occurrences of the variable $x$ by the term $t$.



\noindent A $\Theta$-structure $\mathfrak{A}$ is a pair $\langle {\bf A},\S \rangle$ where $\bf A$ is a complete $\lukres^{\Delta}_n$-algebra (that is, we are asking for all subsets of $A$ to have both a supremum and an infimum),  and the structure: $$\S=\langle S,\{P_{\S}\}_{P\in\P},\{f_{\S}\}_{f\in\F}, \C , (\cdot)^\mathfrak{A} \rangle,$$ where  $S$ is a non-empty set (domain) and $(\cdot)^\mathfrak{A}$ is an interpretation map which assigns:

\begin{enumerate}

\item[$\bullet$]  to each individual constant $c\in \C$, an element $c^\mathfrak{A}$ of $S$; 

\item[$\bullet$] to each predicate symbol $P$ of arity $n$, a function $P^\mathfrak{A}:S^n\to A$; 

\item[$\bullet$] to each functional symbol $f$, a function $f^\mathfrak{A}:S^n\to S$.

\end{enumerate}

Let  $\forall\luklog_n$ the first order Hilbert-style calculus defined (over the language $\mathscr{L}(\Theta)$) by extending the calculus $\luklog_n$ by adding the following:\\[3mm]
{\bf Axioms Schemata} 

\begin{equation}\tag{{\bf Ax}9}
\varphi(x/t)\rightarrowtail\exists x\varphi, \,  \mbox{ if $t$ is a term free for $x$ in $\varphi$,}
\end{equation}
\begin{equation}\tag{{\bf Ax}10}
\forall x\varphi\rightarrowtail\varphi(x/t), \, \mbox{ if $t$ is a term free for $x$ in $\varphi$.}
\end{equation}

\noindent  {\bf Axiom Schema}
\begin{equation}\tag{{\bf Ax}11}
   x\approx x,
 \end{equation}

\noindent {\bf Inference rules}

\begin{equation}\tag{$\exists$-{\bf In}}
 \dfrac{\alpha\to\beta}{\exists x\alpha\to\beta}, \, \mbox{ and $x$ does not occur free in $\beta$,}
  \end{equation}
  \begin{equation}\tag{$\forall$-{\bf In}}
\dfrac{\alpha\to\beta}{\alpha\to\forall x\beta}, \, \mbox{ and $x$ does not occur free in $\alpha$.}
  \end{equation}
  \begin{equation}
  \tag{R-$\approx$ } \dfrac{x\approx y }{ \varphi\to\varphi(x\wr y)}, \, \mbox{ we call this rule as  Leibniz law } 
    \end{equation}
  
   \begin{equation}
   \mbox{where  $y$ is a free variable for $x$ in $\varphi$, and $\varphi(x\wr y)$  denotes any formula obtained from  $\varphi$ replacing some, but not necessarily all, free occurrences of $x$ by $y$.}
  \end{equation} 
 
 \
 \begin{prop}{\rm \cite{FOS1}} For a term $t_1$, and $x,y$ and $z$ individual variables, we have:
\begin{multicols}{2}
\begin{itemize}
  \item[\rm (i)] $\vdash \forall x ( x\approx x)$,
  \item[\rm (ii)] $\vdash t_1\approx t_1$,
  \item[\rm (iii)] $\{ x\approx y  \}\vdash y \approx x $,
  \item[\rm (iv)] $\{ x\approx y, y\approx z \}\vdash x\approx z$,
\end{itemize}
\end{multicols}
\end{prop}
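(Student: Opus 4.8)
The plan is to derive all four items from the equality axiom (\textbf{Ax}11), the universal--instantiation axiom (\textbf{Ax}10), the Leibniz rule (R-$\approx$) and the rule ($\forall$-\textbf{In}), together with the propositional theorems of $\luklog_n$ established above and (MP); the Tarskian properties (reflexivity, monotonicity, cut) will be used freely to combine derivations.

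First I would establish a \emph{generalization} derived rule: $\vdash\varphi$ implies $\vdash\forall x\,\varphi$. Fix a variable $x'\neq x$ and set $\sigma:=x'\approx x'$, which is a theorem by (\textbf{Ax}11) and in which $x$ does not occur free. From $\vdash\varphi$ and the instance $\varphi\iml(\sigma\iml\varphi)$ of (\textbf{Ax}1) one gets $\vdash\sigma\iml\varphi$ by (MP); since $x$ is not free in $\sigma$, rule ($\forall$-\textbf{In}) gives $\vdash\sigma\iml\forall x\,\varphi$, and a final (MP) with $\vdash\sigma$ yields $\vdash\forall x\,\varphi$. Applying this to the instance $\vdash x\approx x$ of (\textbf{Ax}11) proves (i).

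Item (ii) follows by universal instantiation: $t_1$ is (vacuously) free for $x$ in $x\approx x$, so (\textbf{Ax}10) gives $\vdash\forall x(x\approx x)\iml(t_1\approx t_1)$, whence $\vdash t_1\approx t_1$ by (MP) with (i). For (iii) I would apply (R-$\approx$) with $\varphi:=x\approx x$, replacing only the first free occurrence of $x$ by $y$ (legitimate, since $y$ is free for $x$ in the quantifier--free formula $x\approx x$); this produces $\{x\approx y\}\vdash(x\approx x)\iml(y\approx x)$, and (MP) with the theorem $x\approx x$ gives $\{x\approx y\}\vdash y\approx x$. Finally, for (iv) I would apply (R-$\approx$) to the premise $y\approx z$ with $\varphi:=x\approx y$, replacing the occurrence of $y$ by $z$ (again free for $y$ there); this yields $\{y\approx z\}\vdash(x\approx y)\iml(x\approx z)$, and (MP) with the remaining hypothesis $x\approx y$ gives $\{x\approx y,\,y\approx z\}\vdash x\approx z$.

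I expect the only delicate point to be item (i), where one must check that the auxiliary formula $\sigma$ is simultaneously a theorem and free of the variable $x$, so that the side condition of ($\forall$-\textbf{In}) holds and the dummy antecedent can be discharged. All the remaining substitutability side conditions are trivially met, since every formula involved is a quantifier--free atomic equality; hence no routine computation is needed beyond chaining the listed axioms with (MP).
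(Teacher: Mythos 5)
Your derivations are correct. Note that the paper itself supplies no argument for this proposition — it is simply imported from \cite{FOS1} — so what you have written fills in what the paper leaves to the reference, and it does so with exactly the resources the system $\forall\luklog_n$ provides. The key points all check out: the derived generalization rule is obtained legitimately by discharging a dummy antecedent $\sigma:=x'\approx x'$ (an instance of the schema ({\bf Ax}11) in which $x$ does not occur free), so the side condition of ($\forall$-{\bf In}) is met; (ii) then follows from ({\bf Ax}10) since $t_1$ is trivially free for $x$ in $x\approx x$; and (iii), (iv) are correct uses of the Leibniz rule (R-$\approx$), whose formulation explicitly permits replacing only \emph{some} free occurrences, which is exactly what your choice of $\varphi$ ($x\approx x$ for symmetry, $x\approx y$ for transitivity) requires. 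The only implicit assumption worth flagging is that ({\bf Ax}11) is read as a schema over all individual variables (so that $x'\approx x'$ is available), which is how the paper presents it.
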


\noindent  As above, we denote by $\vdash_{\forall\luklog_n}\alpha$ to indicate that there exists a derivation of $\alpha$ in  $\forall\luklog_n$, and by $\Gamma\vdash_{\forall\luklog_n}\alpha$ if there is a derivation of $\alpha$ from $\Gamma$. Besides, we denote by $\vdash_{\forall\luklog_n}\varphi\leftrightarrow\psi$ to indicate that  $\vdash_{\forall\luklog_n}\varphi\to\psi$ and $\vdash_{\forall\luklog_n}\psi\to\varphi$. \\[2mm]
A $\mathfrak{A}$-assignment is a map $v:Var\to S$. By $v[x\to a]$ we denote the the following $\mathfrak{A}$-assignment:  $v[x\to a](x)=a$ and $v[x\to a](y)=v(y)$ for any $y\in V$ such that $y\neq x$ and any $a\in S$. \\
Let $\mathfrak{A}=\langle {\bf A},\S\rangle$ be a $\Theta$-structure and $v$ a $\mathfrak{A}$-assignment. We define the values of the terms and the truth values of any wff in $\mathfrak{A}$ for the assignment $v$ as follows:

\begin{center}
$||c||^\mathfrak{A}_v=c^\mathfrak{A}$ if $c\in S$,\\ [2mm]

$||x||^\mathfrak{A}_v=v(x)$ if $x\in Var$,\\ [2mm]

$||f(t_1,\cdots ,t_n)||^\mathfrak{A}_v=f^\mathfrak{A}(||t_1||^\mathfrak{A}_v,\cdots ,||t_n||^\mathfrak{A}_v)$, for any $f\in\F$,\\ [2mm]

$||P(t_1,\cdots ,t_n)||^\mathfrak{A}_v=P^\mathfrak{A}(||t_1||^\mathfrak{A}_v,\cdots ,||t_n||^\mathfrak{A}_v)$, for any $P\in\P$,\\ [2mm]
$||\alpha\rightarrowtail\beta||^\mathfrak{A}_v=||\alpha||^\mathfrak{A}_v\rightarrowtail ||\beta||^\mathfrak{A}_v$,\\ [2mm]

$||\Delta\alpha||^\mathfrak{A}_v=\Delta ||\alpha||^\mathfrak{A}_v$,\\ [2mm]
{\red }
$||\forall x\alpha||^\mathfrak{A}_v=\underset{a\in S}{\bigwedge} ||\alpha||^\mathfrak{A}_{v[x\to a]}$,\\ [2mm]
$||\exists x\alpha||^\mathfrak{A}_v=\underset{a\in S}{\bigvee}||\alpha||^\mathfrak{A}_{v[x\to a]}$.

\end{center}

\noindent Now, it is easy to see that the following property   $||\varphi(x/t)||^\mathfrak{A}_v= ||\varphi||^\mathfrak{A}_{v[x\to ||t||^\mathfrak{A}_v]}$ holds. 
We say that $\mathfrak{A}$ and $v$ {\bf satisfy} a formula $\varphi$, denoted by $\mathfrak{A}\vDash\varphi[v]$, if $||\varphi||^\mathfrak{A}_v=1$. Besides, we say that $\varphi$ is {\bf valid} in $\mathfrak{A}$ if $||\varphi||^\mathfrak{A}_v=1$ for each  $\mathfrak{A}$-assignment $v$, and we denote $\mathfrak{A}\vDash\varphi$. We say that $\varphi$ is a {\bf semantical consequence} of $\Gamma$ in $\forall\luklog_n$, if for any structure $\mathfrak{A}$ it holds:
 $$\mathfrak{A}\vDash\gamma \mbox{  for each } \gamma\in\Gamma, \mbox{ implies } \mathfrak{A}\vDash\varphi.$$ 
 In this case, we denote it by $\Gamma\vDash\varphi$.

 We define the values of terms and the truth values of the formulas in $\mathfrak{A}$ for a valuation $v$ extending the Definition above adding the following condition: $||t_1\approx t_2||_v^\mathfrak{A}=1$ if and only if $||t_1||_v^\mathfrak{A}=||t_2||_v^\mathfrak{A}$. For a given set of formulas $\Gamma\cup \{\alpha\}$, the semantical consequence of $\alpha$ from $\Gamma$ that we denote by $\Gamma\vDash\alpha$  is defined as usual.

\

\begin{theorem} {\rm (Soundness Theorem).}\label{compleprimererord} Let $\Gamma\cup\{\varphi\}\subseteq\mathfrak{Fm}_{\Sigma}$, if $\Gamma\vdash_{\forall\luklog_n}\varphi$ then $\Gamma\vDash\varphi$.
\end{theorem}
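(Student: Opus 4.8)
The plan is to argue by induction on the length of a derivation of $\varphi$ from $\Gamma$, establishing the slightly stronger statement that \emph{every} formula occurring in such a derivation is valid in every $\Theta$-structure $\mathfrak{A}$ with $\mathfrak{A}\vDash\gamma$ for all $\gamma\in\Gamma$. Fixing one such $\mathfrak{A}$, the only cases requiring work are: (a) that each axiom of $\forall\luklog_n$ yields formulas valid in $\mathfrak{A}$ under every $\mathfrak{A}$-assignment, and (b) that each inference rule sends formulas valid in $\mathfrak{A}$ (under all assignments) to formulas valid in $\mathfrak{A}$ (under all assignments); the case of a formula belonging to $\Gamma$ is immediate.

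For (a), the propositional axioms (\textbf{Ax}1)--(\textbf{Ax}8$n$) cause no trouble: evaluating such an axiom at an assignment $v$ in $\mathfrak{A}$ produces, for the elements $||\alpha||^{\mathfrak{A}}_v, ||\beta||^{\mathfrak{A}}_v, ||\gamma||^{\mathfrak{A}}_v$ of the underlying $\lukres^{\Delta}_n$-algebra, an instance of one of the defining identities of the variety $\lukres^{\Delta}_n$ --- for instance (\textbf{Ax}1) gives (\L1), (\textbf{Ax}5$n$) gives (\L6), and (\textbf{Ax}8$n$) gives (\L10) after rewriting $\Delta\alpha\rightarrowtail\beta$ through ($\Delta$\L1) --- and hence equals $1$; equivalently, this is the soundness half of Theorem~\ref{wat} read inside the algebra component of $\mathfrak{A}$. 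Axiom (\textbf{Ax}11) is valid because $||x\approx x||^{\mathfrak{A}}_v=1$ by the clause interpreting $\approx$. The quantifier axioms (\textbf{Ax}9) and (\textbf{Ax}10) are where completeness of the algebra enters: using the substitution property $||\varphi(x/t)||^{\mathfrak{A}}_v=||\varphi||^{\mathfrak{A}}_{v[x\to ||t||^{\mathfrak{A}}_v]}$ (valid precisely because $t$ is free for $x$ in $\varphi$) together with $||t||^{\mathfrak{A}}_v\in S$, one obtains $||\varphi(x/t)||^{\mathfrak{A}}_v\leq\bigvee_{a\in S}||\varphi||^{\mathfrak{A}}_{v[x\to a]}=||\exists x\varphi||^{\mathfrak{A}}_v$ and, dually, $||\forall x\varphi||^{\mathfrak{A}}_v=\bigwedge_{a\in S}||\varphi||^{\mathfrak{A}}_{v[x\to a]}\leq||\varphi(x/t)||^{\mathfrak{A}}_v$; the supremum and infimum exist exactly because the algebra is complete, and $u\leq w$ amounts to $u\rightarrowtail w=1$, so both axioms evaluate to $1$.

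For (b), (MP) preserves validity by (\L5), just as at the propositional level. For ($\forall$-\textbf{In}), assume $\alpha\to\beta$ is valid in $\mathfrak{A}$, so $||\alpha||^{\mathfrak{A}}_v\leq||\beta||^{\mathfrak{A}}_v$ for every $v$; since $x$ does not occur free in $\alpha$, $||\alpha||^{\mathfrak{A}}_v=||\alpha||^{\mathfrak{A}}_{v[x\to a]}\leq||\beta||^{\mathfrak{A}}_{v[x\to a]}$ for all $a\in S$, hence $||\alpha||^{\mathfrak{A}}_v\leq\bigwedge_{a\in S}||\beta||^{\mathfrak{A}}_{v[x\to a]}=||\forall x\beta||^{\mathfrak{A}}_v$; the rule ($\exists$-\textbf{In}) is treated dually. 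For (R-$\approx$), if $x\approx y$ is valid in $\mathfrak{A}$ then $v(x)=v(y)$ for every assignment $v$, and a replacement lemma --- proved by induction on the structure of $\varphi$, using that $y$ is free for $x$ in $\varphi$ to prevent variable capture at $\forall y$ and $\exists y$ --- gives $||\varphi||^{\mathfrak{A}}_v=||\varphi(x\wr y)||^{\mathfrak{A}}_v$, so $||\varphi\to\varphi(x\wr y)||^{\mathfrak{A}}_v=1$.

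I expect the difficulties to be bookkeeping rather than conceptual. The two points needing genuine care are: first, the replacement lemma for (R-$\approx$) --- the substitution property used for (\textbf{Ax}9) and (\textbf{Ax}10) is already available from the preceding discussion --- which must be proved by a careful induction on formula complexity with attention to the ``free for'' side conditions; and second, the observation that ($\forall$-\textbf{In}), ($\exists$-\textbf{In}) and (R-$\approx$) are compatible with the all-assignments reading of $\vDash$: it is precisely because ``$\mathfrak{A}\vDash\psi$'' means $||\psi||^{\mathfrak{A}}_v=1$ for \emph{all} $v$ that applying these rules to (possibly non-closed) intermediate formulas of a derivation from $\Gamma$ keeps the induction running.
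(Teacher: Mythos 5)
Your proof is correct, but it follows a genuinely different route from the paper. The paper does not verify soundness directly: it observes that every $\lukres^{\Delta}_n$-algebra is a Monteiro algebra in the sense of Section 6 of \cite{FOS1} and then obtains the theorem as a particular case of the general soundness result \cite[Theorem 5.6]{FOS1}, the same framework it later reuses for completeness. You instead give a self-contained induction on the length of a derivation, checking that each propositional axiom evaluates to $1$ because it instantiates an identity of $\lukres^{\Delta}_n$ (equivalently, the soundness half of Theorem \ref{wat}), that ({\bf Ax}9)--({\bf Ax}10) hold via the substitution property and the completeness of the algebra (which is exactly why suprema and infima exist), that ({\bf Ax}11) holds by the clause for $\approx$, and that (MP), ($\exists$-{\bf In}), ($\forall$-{\bf In}) and (R-$\approx$) preserve validity — the last via a replacement lemma, and the quantifier rules via the agreement of values on variables not occurring free. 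Your two flagged points are indeed the real content a direct proof must supply: the replacement/agreement lemmas with their ``free for'' side conditions, and the observation that the global (all-assignments) reading of $\vDash$ is what makes the generalization and Leibniz rules compatible with derivations from a set $\Gamma$ of possibly open formulas. What each approach buys: yours is elementary, transparent about where completeness of the algebra and the global consequence relation are used, and independent of external machinery; the paper's is much shorter and uniform, delegating all of this bookkeeping to the general theory of \cite{FOS1} at the cost of leaving those verifications implicit.
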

\begin{proof} 
 From the fact that each \lukres$^{\Delta}_n$-algebra is in fact a  Monteiro algebra in term of \cite[Section 6]{FOS1}. The proof of this Theorem is a particular case that \cite[Theorem 5.6.]{FOS1}.
\end{proof}

\ 

It is possible to define the notion  that a theory $\Gamma$ is maximal respect to the some formula $\varphi$, see Definition \ref{maxi} for the logic  $\Gamma\vdash_{\forall\luklog_n}$. Moreover, the Lindenbaum-\L o\'s Lemma holds for  $\Gamma\vdash_{\forall\luklog_n}$. From the latter considerations and the fact that each \lukres$^{\Delta}_n$-algebra is in fact a  Monteiro algebra we have proved the following theorem.

 \begin{prop}{\rm \cite{FOS1}} For given  terms $t_1,t'_1,\cdots ,t_n,t'_n$, we have:
\begin{itemize}
  \item[\rm (i)] $\{ t_1 \approx t_2  \}\vdash t_2 \approx t_1$,
  \item[\rm (ii)] $\{ t_1\approx t_2, t_2\approx t_3 \}\vdash t_1\approx t_3$,
  \item[\rm (iii)] $\{ t_1\approx t'_1, t_2\approx t'_2, t_3\approx t'_3,\cdots,t_{n-1}\approx t'_{n-1}\}\vdash f(t_1,\cdots,t_n)\approx f(t'_1,\cdots,t'_n)$, for any function symbol $f$ of arity $n$,
  \item[\rm (iv)] $ \{ t_1\approx t'_1, t_2\approx t'_2, t_3\approx t'_3, \cdots, t_{n}\approx t'_{n} \}\vdash \varphi(\overrightarrow{x}/\overrightarrow{t})\to\varphi(\overrightarrow{x}/\overrightarrow{t'})$, for any formula without quantifiers $\varphi$ depending at most on the variables $x_1,\cdots,x_n$. Instead of $\xi_1,\cdots,\xi_n$ (where $\xi_i$'s are terms or formulae and $n$ is arbitrary or fixed by the context), we write just $\overrightarrow{\xi}$.
\end{itemize}
\end{prop}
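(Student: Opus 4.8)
The plan is to reduce all four assertions to a single \emph{term-level Leibniz principle} extracted from the rule (R-$\approx$), exploiting that $\forall\luklog_n$ is a structural deductive system. Items (i) and (ii) also fall out at once: from $\{x\approx y\}\vdash y\approx x$ and $\{x\approx y,\,y\approx z\}\vdash x\approx z$ (the symmetry and transitivity clauses of the preceding proposition for individual variables) one applies the substitution $\sigma$ sending $x\mapsto t_1$, $y\mapsto t_2$ (resp. also $z\mapsto t_3$); since the formulas involved are atomic equalities, $\sigma$ is admissible and structurality yields $\{t_1\approx t_2\}\vdash t_2\approx t_1$ and $\{t_1\approx t_2,\,t_2\approx t_3\}\vdash t_1\approx t_3$.

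The key step is a term-Leibniz principle: for every quantifier-free $\chi$, every variable $z$, and all terms $t,s$, one has $\{t\approx s\}\vdash_{\forall\luklog_n}\chi(z/t)\to\chi(z/s)$. To get it, fix a variable $z'$ not occurring in $\chi$; then $z'$ is free for $z$ in $\chi$, so (R-$\approx$) applied to $z\approx z'$, choosing to replace \emph{all} free occurrences of $z$, gives $\{z\approx z'\}\vdash\chi\to\chi(z/z')$. Now apply the substitution $\sigma$ with $\sigma(z)=t$, $\sigma(z')=s$, and $\sigma$ the identity elsewhere: $\sigma$ is admissible because $\chi$ and $\chi(z/z')$ are quantifier-free, and since $z'$ does not occur in $\chi$ we have $\sigma(\chi)=\chi(z/t)$, $\sigma(\chi(z/z'))=\chi(z/s)$ and $\sigma(z\approx z')=t\approx s$, so structurality delivers the principle.

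With this in hand, (iii) and (iv) are chaining arguments. For (iii), put $s_j:=f(t'_1,\dots,t'_j,t_{j+1},\dots,t_n)$, so $s_0=f(t_1,\dots,t_n)$ and $s_n=f(t'_1,\dots,t'_n)$; for each $j$ apply the principle with $\chi:=\big(f(t_1,\dots,t_n)\approx f(t'_1,\dots,t'_{j-1},z,t_{j+1},\dots,t_n)\big)$, $z$ fresh, obtaining $\{t_j\approx t'_j\}\vdash(f(\dots)\approx s_{j-1})\to(f(\dots)\approx s_j)$. Starting from the reflexivity theorem $\vdash f(\dots)\approx f(\dots)$ and applying (MP) successively for $j=1,\dots,n$ yields $\{t_1\approx t'_1,\dots,t_n\approx t'_n\}\vdash f(t_1,\dots,t_n)\approx f(t'_1,\dots,t'_n)$.

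For (iv), set $\chi_j:=\varphi(x_1/t'_1,\dots,x_j/t'_j,x_{j+1}/t_{j+1},\dots,x_n/t_n)$, so $\chi_0=\varphi(\overrightarrow{x}/\overrightarrow{t})$ and $\chi_n=\varphi(\overrightarrow{x}/\overrightarrow{t'})$; these simultaneous substitutions cause no capture precisely because $\varphi$ is quantifier-free. For each $j$ let $\theta_j:=\varphi(x_1/t'_1,\dots,x_{j-1}/t'_{j-1},x_j/z,x_{j+1}/t_{j+1},\dots,x_n/t_n)$ with $z$ fresh; then $\theta_j$ is quantifier-free, $\theta_j(z/t_j)=\chi_{j-1}$ and $\theta_j(z/t'_j)=\chi_j$, so the principle gives $\{t_j\approx t'_j\}\vdash\chi_{j-1}\to\chi_j$. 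Chaining these $n$ implications by the transitivity rule ($\luklog_n$2) for $\to$ gives $\{t_1\approx t'_1,\dots,t_n\approx t'_n\}\vdash\varphi(\overrightarrow{x}/\overrightarrow{t})\to\varphi(\overrightarrow{x}/\overrightarrow{t'})$. The only genuinely delicate point throughout is bookkeeping: keeping the auxiliary variables $z,z'$ fresh and checking that every substitution invoked is admissible, which is automatic here since (R-$\approx$) and the term-Leibniz principle are used exclusively on quantifier-free formulas; the one creative ingredient is the choice of the right ``context'' formula $\chi$ (resp. $\theta_j$) in each item.
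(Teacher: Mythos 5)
Your chaining arguments for (iii) and (iv), and the choice of the context formulas $\chi$ and $\theta_j$, are fine; the problem is the foundation they rest on. Everything hinges on the claim that $\vdash_{\forall\luklog_n}$ is invariant under substitution of terms for individual variables (``structurality''), which you invoke both to transfer symmetry and transitivity from variables to terms and, crucially, to extract the term-level Leibniz principle from (R-$\approx$). The paper defines structurality (condition (5)) only for propositional deductive systems and never establishes any substitution lemma for $\forall\luklog_n$; and such a lemma is not automatic here, because the equality primitives are variable-restricted: (Ax11) is $x\approx x$ for a variable $x$, and (R-$\approx$) has a variable premise $x\approx y$ with the side condition that $y$ be a \emph{variable} free for $x$ in $\varphi$. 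Consequently, applying $\sigma$ with $\sigma(z)=t$, $\sigma(z')=s$ to the one-step derivation $\{z\approx z'\}\vdash \chi\to\chi(z/z')$ does not produce a derivation: the resulting step, ``from $t\approx s$ infer $\sigma(\chi)\to\sigma(\chi(z/z'))$'', is no longer an instance of (R-$\approx$). Worse, if you try to prove the needed substitution lemma by induction on derivations, the (R-$\approx$) case demands exactly a term-level Leibniz principle, i.e.\ essentially item (iv) (and the (Ax11) case demands item (ii) of the preceding proposition); so the argument is circular at its core, not merely incomplete. (If instead one reads (R-$\approx$) schematically with terms in place of $x,y$, then the whole proposition is immediate --- which is what the paper's one-line proof, deferring to \cite{FOS1}, in effect asserts --- and your detour through structurality is unnecessary.)

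A non-circular syntactic route looks different: first obtain universally closed versions such as $\forall x\forall y\,(x\approx y\to_k y\approx x)$ --- e.g.\ via (Ax1) and ($\forall$-{\bf In}) applied to variables not occurring free in the hypotheses, together with a graded deduction theorem or by carrying the hypothesis through the generalization --- and then specialize to arbitrary terms by (Ax10) and detach with ($k$-weak) modus ponens; the term-Leibniz principle itself is then proved this way (or by induction on $\chi$ starting from the congruence facts), after which your items (iii) and (iv) chain exactly as you wrote them. As it stands, though, the central step of your proposal appeals to a closure property of the calculus that the paper neither states nor proves, and whose proof would already contain the proposition. (A minor point: in (iii) you correctly use all $n$ hypotheses $t_j\approx t'_j$; the bound $t_{n-1}\approx t'_{n-1}$ in the statement is evidently a typo.)
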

\begin{proof} 
It follows immediately from the very definitions.
\end{proof}

\begin{theorem} {\rm (Completeness of $\forall\luklog_n$ w.r.t. the class of  \lukres$^{\Delta}_n$-algebras).}\label{compl} Let $\Gamma\cup\{\varphi\}$ be a set of formulas.  Then: $\Gamma\vDash\varphi$ implies that $\Gamma\vdash\varphi$.
\end{theorem}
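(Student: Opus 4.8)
The plan is to establish the contrapositive: assuming $\Gamma\not\vdash_{\forall\luklog_n}\varphi$, I will exhibit a $\Theta$-structure $\mathfrak{A}=\langle{\bf A},\S\rangle$ together with an $\mathfrak{A}$-assignment $v$ such that $\mathfrak{A}\vDash\gamma[v]$ for every $\gamma\in\Gamma$ but $\mathfrak{A}\not\vDash\varphi[v]$, which contradicts $\Gamma\vDash\varphi$ and hence forces $\Gamma\vdash\varphi$. This is the Henkin term--model construction adapted to the algebra-valued semantics; the general apparatus of \cite{FOS1} (applicable since, by Lemma~\ref{lemaux} and the preceding results, every $\lukres^{\Delta}_n$-algebra is a Monteiro algebra) reduces the real work to the quantifier and equality cases. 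First I would conservatively expand the signature $\Theta$ to $\Theta'$ by adjoining a denumerable set $\{c_k : k<\omega\}$ of fresh individual constants, and, fixing an enumeration $(\exists x_i\,\psi_i)_{i<\omega}$ of the existentially quantified formulas of $\mathscr{L}(\Theta')$, build an increasing chain $\Gamma=\Gamma_0\subseteq\Gamma_1\subseteq\cdots$ by adjoining to $\Gamma_i$ a \emph{witness} formula $\psi_i(x_i/c)$ for a constant $c$ fresh for $\Gamma_i,\psi_i,\varphi$; using the rule $\exists$-{\bf In} together with the propositional derivations already at hand, one checks that the witness can always be chosen so that $\Gamma_{i+1}\not\vdash_{\forall\luklog_n}\varphi$. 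Then $\Gamma_\omega:=\bigcup_i\Gamma_i$ still does not derive $\varphi$ and has the Henkin (witness) property.

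Since $\forall\luklog_n$ is Tarskian and finitary, the Lindenbaum--\L o\'s Lemma (Lemma~\ref{exmaxnotr}) yields a theory $\Omega\supseteq\Gamma_\omega$ over $\mathscr{L}(\Theta')$ that is maximal consistent with respect to $\varphi$, and $\Omega$ inherits the witness property. Exactly as in Theorem~\ref{morf} — whose argument uses only the propositional part of the calculus and so transfers verbatim to the first-order language over $\Theta'$ — the quotient $\mathfrak{Fm}_\Sigma/\Omega$, with $\alpha\equiv_\Omega\beta$ iff $\Omega\vdash\alpha\imn\beta$ and $\Omega\vdash\beta\imn\alpha$, is a simple $\lukres^{\Delta}_n$-algebra, hence by Corollary~\ref{lemsim} embeds into ${\bf \L}^{\Delta}_n$; composing, one gets a homomorphism $h:\mathfrak{Fm}_\Sigma\fun{\bf \L}^{\Delta}_n$ with $h(\overline{\psi})=1$ iff $\psi\in\Omega$. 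As ${\bf \L}^{\Delta}_n$ is finite it is a \emph{complete} $\lukres^{\Delta}_n$-algebra, so it is admissible as the algebra component ${\bf A}$ of $\mathfrak{A}$. For the domain I would take $S$ to be the set of closed $\Theta'$-terms modulo $t_1\sim t_2\iff\Omega\vdash t_1\approx t_2$ — a congruence for the function symbols, reflexive, symmetric and transitive by the equality propositions recorded just before the theorem (and by the axiom $x\approx x$ and the Leibniz rule R-$\approx$) — with $c^{\mathfrak{A}}=[c]$, $f^{\mathfrak{A}}([t_1],\dots)=[f(t_1,\dots)]$, $P^{\mathfrak{A}}([t_1],\dots)=h(\overline{P(t_1,\dots)})$, and $v$ the assignment sending every variable to, say, $[c_0]$.

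The core is the \textbf{Truth Lemma}: for every $\Theta'$-sentence $\psi$ one has $\|\psi\|^{\mathfrak{A}}_v=h(\overline{\psi})$, proved by induction on the complexity of $\psi$, the quantifier steps being reduced to closed instances via $\|\varphi(x/t)\|^{\mathfrak{A}}_v=\|\varphi\|^{\mathfrak{A}}_{v[x\to\|t\|^{\mathfrak{A}}_v]}$. The $\approx$-atomic case is precisely the definition of $\sim$; the $P$-atomic case and the $\iml,\Delta$ cases are immediate since $h$ is a homomorphism. For $\exists x\psi$ the witness property supplies a constant $c$ with $\psi(x/c)\in\Omega$ whenever $\exists x\psi\in\Omega$, which together with {\bf Ax}9 and the finiteness of ${\bf A}$ forces $\bigvee_{a\in S}\|\psi\|^{\mathfrak{A}}_{v[x\to a]}=h(\overline{\exists x\psi})$; the $\forall x\psi$ case is dual, using {\bf Ax}10, the rule $\forall$-{\bf In} and the maximality of $\Omega$ (if the infimum were strictly above $h(\overline{\forall x\psi})$ one would get $\forall x\psi\in\Omega$, and thence $\varphi$). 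Granting the Truth Lemma, every $\gamma\in\Gamma\subseteq\Omega$ satisfies $\|\gamma\|^{\mathfrak{A}}_v=h(\overline{\gamma})=1$, while $\|\varphi\|^{\mathfrak{A}}_v=h(\overline{\varphi})\neq1$ because $\Omega\not\vdash\varphi$; restricting $\mathfrak{A}$ to the original signature $\Theta$ then contradicts $\Gamma\vDash\varphi$, and we are done.

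I expect the main obstacle to be exactly the quantifier part of the Truth Lemma — especially the $\forall$-step — together with the verification that witnesses can be added along the chain without ever creating a derivation of $\varphi$: this is where the interaction of the Henkin constants, the rules $\exists$-{\bf In} and $\forall$-{\bf In}, and the algebra-valued (rather than two-valued) reading of the connectives must be controlled carefully. By contrast, the equality bookkeeping — well-definedness of $\sim$ and of $f^{\mathfrak{A}},P^{\mathfrak{A}}$ on $\sim$-classes, and the fact that R-$\approx$ is sound in the term model — although laborious, is routine given the equality propositions already available from \cite{FOS1}.
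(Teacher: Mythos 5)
Your route is genuinely different from the paper's: the paper proves this theorem simply by invoking the general machinery of \cite{FOS1} (every \lukres$^{\Delta}_n$-algebra is a Monteiro algebra, so the completeness theorem of \cite[Section 6]{FOS1} applies as a particular case), and the paper explicitly points out that in that method the maximal theories are Monteiro-style (maximal with respect to a formula) and the canonical model is built over the set of \emph{formulas}, not sentences, so no Henkin constants are introduced at all. Your proposal instead re-does a classical Henkin construction with fresh constants, and as written it has two concrete gaps. First, the witness step: you adjoin the witness $\psi_i(x_i/c)$ itself to $\Gamma_i$, unconditionally, and assert that non-derivability of $\varphi$ is preserved "using $\exists$-{\bf In} and the propositional derivations at hand". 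What one normally adds is the Henkin implication $\exists x_i\psi_i\iml\psi_i(x_i/c)$, and even then the conservativity argument needs a deduction theorem (here only a graded one via $\rightarrow_k$ or $\Delta$ is plausible, and none is established in the paper), a theorem on constants, and a derivation pushing $\exists$ through $\rightarrow_k$; none of this is routine in the $\{\iml,\Delta\}$-fragment and none of it is supplied.

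Second, and more seriously, the $\forall$-case of the Truth Lemma does not go through as sketched. The language has no negation and no $\bot$ at the propositional level of $\luklog_n$, so $\forall$ is not reducible to $\exists$, and you only saturated for existential witnesses. Your parenthetical argument --- "if the infimum were strictly above $h(\overline{\forall x\psi})$ one would get $\forall x\psi\in\Omega$, and thence $\varphi$" --- is unsupported: in an $n$-valued model a sentence can take a value strictly below $1$ (hence lie outside $\Omega$) while every closed instance takes a strictly larger value; maximality of $\Omega$ with respect to $\varphi$ only controls membership (value $1$), not the intermediate truth values that the identity $\|\forall x\psi\|^{\mathfrak{A}}_v=h(\overline{\forall x\psi})$ requires. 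Closing this step needs either $\forall$-witnessing constants (Henkin axioms of the form $\psi(x/c)\iml\forall x\psi$) added along the chain, or a Rasiowa--Sikorski-style argument that the maximal extension can be chosen to preserve the countably many relevant infima and suprema --- exactly the kind of work that the paper delegates wholesale to \cite{FOS1}. Until those two points are repaired, the proposal does not establish the theorem.
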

\begin{proof} 
It follows the corresponding proofs of \cite[Section 6]{FOS1} as particular case.
 \end{proof}

 It is worth mentioning that in Cintula and Noguera's paper \cite{CN2}, it was presented a generic Completeness Theorem for certain algebraizable logics. So, Theorem \ref{compl} can be obtained using their method. However, the proof given in Figallo-Orellano and Slagter's paper \cite{FOS1} is  different since there it is used a different notion of maximal consistent theory and the {\em canonical model} is built over set of formulas instead of sentences.

\section{Infinite-valued $\Delta$-\L ukasiewicz residuation algebras with bottom }

In this section, we introduce a new class of algebras, more general that the ones considered in Section \ref{s3}, and that properly contains all the classes considered there.

\begin{definition}\label{def41} A \L ukasiewicz residuation algebra with $\Delta$ (or a $\Delta$-\L ukasiewicz residuation algebra) is an algebra $\A=\langle A, \iml, \Delta, 1\rangle$ such that it satisfies identities (\L1)--(\L5) and
\begin{enumerate}[\rm ($\Delta$\L R$1$)] 
\item $\Delta x\iml x\approx1$,
\item $\Delta x \iml y \approx \Delta x \iml(\Delta x\iml y)$, 
\item If $z\iml y \approx z\iml (z\iml y)$   and  $z\preceq x$, then $z\preceq \Delta x$, 
\item $\Delta(x\iml z) \iml (\Delta x \iml \Delta z) \approx 1$.
\end{enumerate}
We denote by $\lukres^{\Delta}$ the class of $\Delta$-\L ukasiewicz residuation algebras.
\end{definition}
\

It is worth mentioning that the class of $\Delta$-\L ukasiewicz residuation algebras constitute a quasi-variety. The intuition behind quasi-identity ($\Delta$\L R$3$) is to assure that $\Delta x$ is the greatest Tarskian element below $x$. We  follow  ideas presented in \cite{EFFG}.

\begin{lemma} Let  $\A$ be an $\lukres^{\Delta}$-algebra. Then, for every $x,y\in A$ the following conditions are satisfied:
 \begin{multicols}{2}
\begin{enumerate}
    \item[\rm ($\Delta$\L R$5$)] $\Delta 1=1$,
    \item[\rm ($\Delta$\L R$6$)] If $x\leq y$, then $\Delta x\leq \Delta y$,
    \item[\rm ($\Delta$\L R$7$)] $\Delta\Delta x = \Delta x$
    \item[\rm ($\Delta$\L R$8$)] $\Delta(\Delta x\rightarrowtail \Delta y)=\Delta x\rightarrowtail \Delta y$
    \item[\rm ($\Delta$\L R$9$)] $T(\A)=\Delta(\A)$,
  
    \item[\rm ($\Delta$\L R$10$)] $\Delta x\rightarrowtail (y\rightarrowtail  z)=(\Delta x\rightarrowtail y)\rightarrowtail(\Delta x\rightarrowtail z)$, 
 
    \item[\rm ($\Delta$\L R$11$)] $\Delta x\rightarrowtail\Delta(x\rightarrowtail\Delta x)=1$,
    \item[\rm ($\Delta$\L R$12$)] $\Delta x\rightarrowtail\Delta(x\rightarrowtail y)=\Delta x\rightarrowtail\Delta y$.
  
\end{enumerate}
\end{multicols}
\end{lemma}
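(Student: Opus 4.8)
The plan is to read axioms ($\Delta$\L R$1$)--($\Delta$\L R$3$) as the single statement that, in any $\lukres^{\Delta}$-algebra $\A$, the element $\Delta x$ is the greatest Tarskian element below $x$: ($\Delta$\L R$1$) gives $\Delta x\leq x$, ($\Delta$\L R$2$) is literally the assertion $\Delta x\in T(\A)$, and ($\Delta$\L R$3$) says every $z\in T(\A)$ with $z\leq x$ satisfies $z\leq\Delta x$. With this dictionary most of the list is immediate. First I would record ($\Delta$\L R$5$) and ($\Delta$\L R$9$): since $1\iml y=y$ by (\L5) the element $1$ is Tarskian, so ($\Delta$\L R$3$) applied to $z=1\leq 1$ gives $1\leq\Delta 1$, whence $\Delta 1=1$; and if $t\in T(\A)$ then $\Delta t\leq t$ by ($\Delta$\L R$1$) while ($\Delta$\L R$3$) applied to $z=t\leq t$ gives $t\leq\Delta t$, so $t=\Delta t$, which together with $\Delta(\A)\subseteq T(\A)$ (that is ($\Delta$\L R$2$)) proves $T(\A)=\Delta(\A)$. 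Then ($\Delta$\L R$6$) follows because $\Delta x\leq x\leq y$ with $\Delta x\in T(\A)$, so ($\Delta$\L R$3$) yields $\Delta x\leq\Delta y$; ($\Delta$\L R$7$) by applying ($\Delta$\L R$3$) to $z=\Delta x\leq\Delta x$; ($\Delta$\L R$11$) because $\Delta x\leq x\iml\Delta x$ by (\L12) and $\Delta x\in T(\A)$, so ($\Delta$\L R$3$) gives $\Delta x\leq\Delta(x\iml\Delta x)$, i.e.\ that implication is $1$; and ($\Delta$\L R$12$) by combining $\Delta x\iml\Delta y\leq\Delta x\iml\Delta(x\iml y)$ (because $\Delta y\leq y\leq x\iml y$ and $\Delta y\in T(\A)$ force $\Delta y\leq\Delta(x\iml y)$ via ($\Delta$\L R$3$), then use (\L11)) with $\Delta x\iml\Delta(x\iml y)\leq\Delta x\iml(\Delta x\iml\Delta y)=\Delta x\iml\Delta y$ (using ($\Delta$\L R$4$), (\L11) and ($\Delta$\L R$2$)).

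For ($\Delta$\L R$10$) I would argue directly. One inequality, $(\Delta x\iml y)\iml(\Delta x\iml z)\leq\Delta x\iml(y\iml z)$, is exactly (\L13) and uses nothing about $\Delta$. For the converse, by (\L9) we have $\Delta x\iml(y\iml z)=y\iml(\Delta x\iml z)$, and (\L15) gives $y\iml(\Delta x\iml z)\leq(\Delta x\iml y)\iml(\Delta x\iml(\Delta x\iml z))$, whose inner term $\Delta x\iml(\Delta x\iml z)$ collapses to $\Delta x\iml z$ by ($\Delta$\L R$2$); combining the two inequalities gives the identity.

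The main work, and the only genuinely technical point, is an analogue of Lemma \ref{lem210}: for every $x\in A$ and every $t\in T(\A)$, the element $\Delta x\iml t$ is again Tarskian. The key observation is that $\Delta x\iml(\cdot)$ here plays exactly the role $x\imn(\cdot)$ played in the proof of Lemma \ref{lem210}: the identity $\Delta x\iml(u\iml v)=u\iml(\Delta x\iml v)$ is (\L9) in place of (\L16), the identity $\Delta x\iml(u\iml v)=(\Delta x\iml u)\iml(\Delta x\iml v)$ is ($\Delta$\L R$10$) just established in place of (\L17), and $t\iml(\Delta x\iml t)\stackrel{(\L9)}{=}\Delta x\iml(t\iml t)\stackrel{(\L10)}{=}\Delta x\iml 1\stackrel{(\L7)}{=}1$ in place of the corresponding identity there; moreover, since $t\in T(\A)=\Delta(\A)$ by ($\Delta$\L R$9$), ($\Delta$\L R$10$) also applies with $\Delta x$ replaced by $t$. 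With these substitutions the chain of equalities in the proof of Lemma \ref{lem210} carries over line by line, showing $(\Delta x\iml t)\iml((\Delta x\iml t)\iml z)\leq(\Delta x\iml t)\iml z$, and (\L12) gives the reverse inequality; hence $\Delta x\iml t\in T(\A)$. The delicate part is simply checking that every step of that displayed computation survives the replacement — this is where I expect to spend the effort. Once it is done, ($\Delta$\L R$8$) is immediate: taking $t=\Delta y$ shows $\Delta x\iml\Delta y\in T(\A)$, so ($\Delta$\L R$3$) applied to $z=\Delta x\iml\Delta y\leq\Delta x\iml\Delta y$ gives $\Delta x\iml\Delta y\leq\Delta(\Delta x\iml\Delta y)$, while the reverse inequality is ($\Delta$\L R$1$); hence $\Delta(\Delta x\iml\Delta y)=\Delta x\iml\Delta y$.
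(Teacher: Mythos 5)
Your proof is correct. The paper itself offers no argument here -- its ``proof'' is the single sentence that the properties are consequences of Definition 4.1 -- so there is nothing concrete to compare against; your write-up is a legitimate (and complete) way of supplying the missing details. You read ($\Delta$\L R$1$)--($\Delta$\L R$3$) as saying that $\Delta x$ is the greatest Tarskian element below $x$, which is exactly the intuition the paper states after the definition, and from that reading ($\Delta$\L R$5$), ($\Delta$\L R$6$), ($\Delta$\L R$7$), ($\Delta$\L R$9$), ($\Delta$\L R$11$) and ($\Delta$\L R$12$) are indeed immediate, while ($\Delta$\L R$10$) follows from (\L9), (\L13), (\L15) and ($\Delta$\L R$2$) as you say. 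The one genuinely non-routine point is ($\Delta$\L R$8$): in the $n$-valued setting of Section 2 this would drop out of the axiom ($\Delta$\L2), which is not available here, and your substitute -- the analogue of Lemma \ref{lem210} stating that $\Delta x\rightarrowtail t$ is Tarskian for $t\in T(\A)$, obtained by replacing $x\rightarrow_{(n-1)}(\cdot)$ with $\Delta x\rightarrowtail(\cdot)$ and (\L16), (\L17) with (\L9), ($\Delta$\L R$10$) -- does carry over line by line (the step where the outer factor is $t\rightarrowtail(\cdot)$ is covered, as you note, by $t\in T(\A)=\Delta(\A)$ and ($\Delta$\L R$10$) applied to $t$), after which ($\Delta$\L R$3$) and ($\Delta$\L R$1$) give ($\Delta$\L R$8$). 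So your proposal is a faithful, fully detailed realization of the proof the paper merely asserts.
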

\begin{proof} They are consequence of Definition \ref{def41}.
\end{proof}

\

\noindent The notion of (maximal) implicative filter for a given  $\lukres^{\Delta}$-algebra $\A$ is the same of Section \ref{imfiltersect}. Besides, if $\A$ is a \L ukasiewicz residuation algebra and $D\subseteq A$ an implicative filter, then it is easy to see that the relation
$$R(D)=\{(x,y)\in A^2 : x\iml y, y\iml x\in D\}$$
is an equivalence relation on $\A$ compatible with \, $\iml$ . Besides, the quotient structure \break $\A/D=\langle A/R(D), \iml, \bar{1}\rangle$  defined in the usual way is also a \L ukasiewicz residuation algebra. 

\begin{proposition} Let $\A$ be a \L ukasiewicz residuation algebra and $D\subseteq A$ an implicative filter. Then, if $z\in T(\A)$ then $\bar{z}\in T(\A/D)$. That is, if $z$ is Tarskian in $\A$, then the equivalence class $\bar{z}$ is Tarskian in $\A/D$.
\end{proposition}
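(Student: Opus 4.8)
The plan is to exploit the fact that the canonical projection $\pi\colon \A\to\A/D$, $x\mapsto\bar x$, is a homomorphism of \L ukasiewicz residuation algebras. This is already guaranteed by the discussion preceding the statement: $R(D)$ is an equivalence relation on $\A$ compatible with $\rightarrowtail$, and $\A/D$ is defined in the usual way, so that $\bar x\rightarrowtail\bar y=\overline{x\rightarrowtail y}$ for all $x,y\in A$, i.e. the operation on the quotient does not depend on the chosen representatives.

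First I would fix $z\in T(\A)$ and an arbitrary element $w\in A/R(D)$, and choose a representative $y\in A$ with $\bar y=w$. Applying $\pi$ to the Tarskian identity $z\rightarrowtail y = z\rightarrowtail(z\rightarrowtail y)$, which holds in $\A$ since $z\in T(\A)$, one obtains
$$\bar z\rightarrowtail w=\bar z\rightarrowtail\bar y=\overline{z\rightarrowtail y}=\overline{z\rightarrowtail(z\rightarrowtail y)}=\bar z\rightarrowtail(\bar z\rightarrowtail\bar y)=\bar z\rightarrowtail(\bar z\rightarrowtail w).$$
Since $w$ was arbitrary, this says precisely that $\bar z$ is a Tarskian element of $\A/D$, that is, $\bar z\in T(\A/D)$.

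The only points requiring a word of justification are the well-definedness of $\rightarrowtail$ on $A/R(D)$ (which is exactly the compatibility of $R(D)$ with $\rightarrowtail$, already established) and the observation that to verify the Tarskian identity for a class $w$ it suffices to verify it on one of its representatives. There is no genuine obstacle here: the statement is an immediate consequence of the homomorphism property of $\pi$ together with the definition of Tarskian element. It is recorded because it is exactly what is needed, in the $\lukres^{\Delta}$-setting, to show that the operator $\Delta$ descends to quotients by implicative filters.
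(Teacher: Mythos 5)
Your proposal is correct and coincides with what the paper intends: the paper's proof is simply ``Immediate,'' the underlying argument being exactly yours, namely that the Tarskian identity $z\iml y=z\iml(z\iml y)$ is preserved under the surjective canonical projection onto $\A/D$ (compare also Proposition \ref{proptars}, which records the same fact for arbitrary $\lukres$-homomorphisms). No gaps; the only points needing care (well-definedness of $\iml$ on the quotient and surjectivity of the projection) are exactly the ones you flag and justify.
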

\begin{proof} Immediate.
\end{proof}

\

\noindent Next, we introduce the notion of {\em $\Delta$-filter}.

\begin{definition}\label{deltafilter} Let $\A\in \lukres^{\Delta}$. We say that the implicative filter $D\subseteq A$ is a $\Delta$-filter if it satisfies the following conditions: for all $x,y,z\in A$
\begin{enumerate}[\rm (i)]
\item if $x\in D$ then $\Delta x\in D$,
\item if $(z\iml(z\iml y))\iml(z\iml y)\in D$ and $z\iml x\in D$, then $z\iml \Delta x\in D$,
\end{enumerate}
We denote by $\D_{\lukres^{\Delta}}(\A)$ the collection of all $\Delta$-filters of $\A$.
\end{definition}

\

\noindent Then,

\begin{lemma} Let $\A$ be  a  $\Delta$-{\L}ukasiewicz residuation algebra; and $D\in \D^{\Delta}_{\infty}(\A)$. Then, the quotient structure $\A/D=\langle A/R(D), \iml, \Delta, 1\rangle$ is a member of the class $\lukres^{\Delta}$.
\end{lemma}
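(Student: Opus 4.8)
The plan is to verify that the quotient $\A/D$, which is already known to be a \L ukasiewicz residuation algebra (since $D$ is in particular an implicative filter), carries a well-defined unary operation $\Delta$ and satisfies the four (quasi-)identities of Definition \ref{def41}. First I would check that $\Delta$ is well-defined on equivalence classes, i.e. that $x\,R(D)\,y$ implies $\Delta x\,R(D)\,\Delta y$: from $x\iml y\in D$, condition (i) of Definition \ref{deltafilter} gives $\Delta(x\iml y)\in D$, and then ($\Delta$\L R$4$) together with closure of $D$ under modus ponens yields $\Delta x\iml\Delta y\in D$; symmetrically $\Delta y\iml\Delta x\in D$, so $\Delta x\,R(D)\,\Delta y$. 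Hence setting $\Delta\bar x:=\overline{\Delta x}$ is legitimate, and $\A/D$ becomes an algebra of type $(2,1,0)$.

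Next I would transfer the identities. For ($\Delta$\L R$1$): $\Delta x\iml x\approx 1$ holds in $\A$, so $\overline{\Delta x\iml x}=\bar 1$, i.e. $\Delta\bar x\iml\bar x=\bar 1$ in $\A/D$; the same purely equational argument handles ($\Delta$\L R$2$) and ($\Delta$\L R$4$), since congruence classes respect $\iml$ and $\Delta$. The only genuine work is the quasi-identity ($\Delta$\L R$3$): one must show that if $\bar z\iml\bar y=\bar z\iml(\bar z\iml\bar y)$ and $\bar z\preceq\bar x$ in $\A/D$, then $\bar z\preceq\Delta\bar x$. Unwinding the quotient order, the hypotheses say $(z\iml(z\iml y))\iml(z\iml y)\in D$ (and the reverse, which is automatic) and $z\iml x\in D$. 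These are precisely the premises of condition (ii) of Definition \ref{deltafilter}, which then delivers $z\iml\Delta x\in D$, i.e. $\bar z\preceq\Delta\bar x$. Finally I would note that $\Delta$ is monotone on $\A/D$ and that $\bar 1$ is the top, so $\A/D\in\lukres^{\Delta}$.

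The main obstacle — and the reason Definition \ref{deltafilter} is phrased as it is — is the quasi-equational axiom ($\Delta$\L R$3$). A naive quotient of a quasi-variety need not satisfy the defining quasi-identities, so one cannot simply invoke "congruences preserve identities"; the premise of the quasi-identity in $\A/D$ must be faithfully pulled back to a statement in $\A$ about membership in $D$, and it is exactly conditions (i)--(ii) of a $\Delta$-filter that make this pullback work. Everything else (well-definedness of $\Delta$, the three plain identities, monotonicity) is routine once the congruence $R(D)$ is known to be compatible with $\Delta$, which is itself the first step above and uses condition (i) of the $\Delta$-filter together with ($\Delta$\L R$4$).
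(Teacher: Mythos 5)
Your proposal is correct and follows essentially the same route as the paper's proof: compatibility of $R(D)$ with $\Delta$ via condition (i) of the $\Delta$-filter together with ($\Delta$\L R$4$), routine transfer of the three equational axioms, and verification of the quasi-identity ($\Delta$\L R$3$) by pulling its premises back to membership in $D$ and applying condition (ii) of the $\Delta$-filter. No gaps.
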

\begin{proof} We know that $R(D)$ is an equivalence relation compatible with \, $\iml$ . From condition (i) of Definition \ref{deltafilter} and ($\Delta$\L R$4$), we have that $R(D)$ is also compatible with $\Delta$.  Besides, it is routine to check that  $\A/D$ verifies the condition ($\Delta$\L R$1$), ($\Delta$\L R$2$) and ($\Delta$\L R$4$). Let us see that it also verifies  ($\Delta$\L R$3$). Indeed, let $x, y, z\in A$ such that (1) $\bar{z}\iml \bar{y} = \bar{z}\iml (\bar{z}\iml \bar{y})$  and (2)  $\bar{z}\leq \bar{x}$. From (1), we have that  $(z\iml(z\iml y))\iml(z\iml y)\in D$; and from (2), we have  $\bar{z}\iml \bar{x} = \bar{1}= D$ and then (4) $z\in x\in D$. Hence, from (3), (4) and condition (ii) of  Definition \ref{deltafilter}, we have that $z\iml \Delta x\in D$, that is, $\bar{z}\iml \Delta\bar{x} = \bar{1}$. 
\end{proof}

\

\begin{proposition} \label{proptars} Let $\A, \B\in \lukres$ and let $h:\A\fun \B$ a $\lukres$-homomorphism. If $t\in T(\A)$, then $h(t)\in T(h(\A))$.
\end{proposition}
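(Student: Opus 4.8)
The plan is to unwind the definition of Tarskian element and use that a homomorphism preserves $\rightarrowtail$, keeping in mind that we only need the Tarskian condition to hold relative to the subalgebra $h(\A)$, not all of $\B$.

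First I would fix $t\in T(\A)$ and an arbitrary element $w\in h(\A)$. By definition of $h(\A)$ as the image subalgebra, there is some $y\in A$ with $w=h(y)$. Since $t$ is Tarskian in $\A$, the identity $t\rightarrowtail y = t\rightarrowtail(t\rightarrowtail y)$ holds in $\A$. Applying the $\lukres$-homomorphism $h$ to both sides and using that $h$ commutes with $\rightarrowtail$, I get
$$h(t)\rightarrowtail h(y) = h(t)\rightarrowtail\bigl(h(t)\rightarrowtail h(y)\bigr),$$
that is, $h(t)\rightarrowtail w = h(t)\rightarrowtail(h(t)\rightarrowtail w)$. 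Since $w$ was an arbitrary element of $h(\A)$ and $h(t)\in h(\A)$, this shows exactly that $h(t)$ is a Tarskian element of the $\lukres$-algebra $h(\A)$, i.e. $h(t)\in T(h(\A))$.

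There is essentially no obstacle here: the statement is an immediate consequence of the fact that the defining equational schema for Tarskian elements is preserved by homomorphisms, once one restricts attention to the image. The only subtlety worth flagging is that one should not expect $h(t)\in T(\B)$ in general, because the Tarskian condition quantifies over all elements of $\B$, many of which need not lie in $h(\A)$; the argument above genuinely only delivers membership in $T(h(\A))$, which is what is claimed.
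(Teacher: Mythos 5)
Your proof is correct and follows essentially the same route as the paper: pick an arbitrary element of the image, write it as $h(y)$, apply $h$ to the Tarskian identity $t\rightarrowtail y = t\rightarrowtail(t\rightarrowtail y)$ and use that $h$ preserves $\rightarrowtail$. Your closing remark correctly identifies why the conclusion is $h(t)\in T(h(\A))$ rather than $T(\B)$, which matches the statement as given.
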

\begin{dem} Let $b\in B$ such that there is $a\in A$ such that $h(a)=b$. Then, $h(t)\iml b = h(t) \iml h(a) =h(t\iml a) = h(t\iml(t\iml a))= h(t)\iml(h(t)\iml h(a))=h(t)\iml(h(t)\iml b)$. That is, $h(t)$ is a Tarskian element of $h(\A)$.
\end{dem}

\

\begin{proposition} Let $\langle A, \iml, 1\rangle$ be a \L ukasiewicz residuation algebra and let $\Delta_1$ and $\Delta_2$ be two unary operators on $A$ such that both satisfy  ($\Delta$\L R$1$)--($\Delta$\L R$4$). Then, \, $\Delta_1=\Delta_2$. That is, every \L ukasiewicz residuation algebra admits at most one structure of $\lukres^{\Delta}$-algebra.
\end{proposition}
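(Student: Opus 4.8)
The plan is to show that, for every $x\in A$, each of $\Delta_1 x$ and $\Delta_2 x$ equals the greatest Tarskian element of $A$ lying below $x$; since this description does not mention the chosen operator, the identity $\Delta_1=\Delta_2$ follows at once. This mirrors the finite-valued uniqueness lemma proved earlier for $\lukres^{\Delta}_n$, with one twist: there is no analogue of the $\rightarrow_n$-based axiom ($\Delta$\L1) to exploit in the infinite setting, so the comparison of the two operators must be routed through the quasi-identity ($\Delta$\L R$3$) instead.

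Concretely, I would fix $x\in A$ and $i\in\{1,2\}$. Axiom ($\Delta$\L R$1$) for $\Delta_i$ gives $\Delta_i x\iml x=1$, that is, $\Delta_i x\leq x$. Axiom ($\Delta$\L R$2$) for $\Delta_i$ asserts $\Delta_i x\iml y=\Delta_i x\iml(\Delta_i x\iml y)$ for every $y\in A$, which is exactly the statement that $\Delta_i x\in T(A)$. Hence each $\Delta_i x$ is a Tarskian element below $x$. Then I would apply ($\Delta$\L R$3$) to the \emph{other} operator: taking $z:=\Delta_1 x$, the two hypotheses of ($\Delta$\L R$3$) — that $z$ be Tarskian (i.e.\ $z\iml y=z\iml(z\iml y)$ for all $y$) and that $z\leq x$ — hold by the previous step, so ($\Delta$\L R$3$) for $\Delta_2$ yields $\Delta_1 x\leq\Delta_2 x$. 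Symmetrically, with $z:=\Delta_2 x$ and ($\Delta$\L R$3$) for $\Delta_1$, one gets $\Delta_2 x\leq\Delta_1 x$. Since $\leq$ is a partial order on the underlying \L ukasiewicz residuation algebra, $\Delta_1 x=\Delta_2 x$, and as $x$ is arbitrary, $\Delta_1=\Delta_2$.

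I do not expect a genuine obstacle here: the whole argument is a short unwinding of Definition \ref{def41}. The two points that deserve a moment's attention are (a) reading ($\Delta$\L R$2$) correctly as the assertion ``$\Delta_i x$ is Tarskian'', so that it can serve as the premise of ($\Delta$\L R$3$) for the other operator, and (b) invoking antisymmetry of $\leq$ to collapse the two inequalities into the desired equality. In passing, this makes precise the comment after Definition \ref{def41}: even though $\Delta$ need not be definable by a term in $\iml$, it is uniquely determined by the underlying \L ukasiewicz residuation algebra.
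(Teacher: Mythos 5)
Your proposal is correct and follows essentially the same route as the paper: both establish $\Delta_1 x\leq x$ and the Tarskian property of $\Delta_1 x$ from ($\Delta$\L R$1$)--($\Delta$\L R$2$), feed these into ($\Delta$\L R$3$) for $\Delta_2$ to obtain $\Delta_1 x\leq\Delta_2 x$, and conclude by symmetry and antisymmetry of $\leq$.
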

\begin{dem} Let $x\in A$. Then, we have that  (1) $\Delta_1 x\leq x$ \, and \, (2) $\Delta_1x\iml y = \Delta_1x\iml(\Delta_1x\iml y)$, for every y. On the other hand, from ($\Delta$\L R$3$) for $\Delta_2$ we have that, (3) for every $y, z\in A$, if $z\iml y=z\iml(z\iml y)$ and $z\leq x$, then $z\leq \Delta_2 x$. From (1), (2) and (3), $\Delta_1 x\leq \Delta_2 x$. Analogously we have that $\Delta_2 x\leq \Delta_1 x$.
\end{dem}

\
  
\begin{prop} Let $\langle A, \iml, 1\rangle$ be a \L ukasiewicz residuation algebra and let $\Delta$ be a unary operators on $A$ such that both satisfy  ($\Delta$\L R$1$)--($\Delta$\L R$4$). Then, \, $\Delta x = \max \, T_x$ where \break $T_x=\{z\in T({\bf A}): z\leq x\}$. 
\end{prop}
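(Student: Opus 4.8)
The plan is to verify the two defining properties of a greatest element: that $\Delta x$ is itself a member of $T_x$, and that it dominates every member of $T_x$.

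First I would show $\Delta x\in T_x$. Property ($\Delta$\L R$1$) reads $\Delta x\iml x\approx 1$, which by the definition of the order on a \L ukasiewicz residuation algebra ($a\leq b$ iff $a\iml b=1$) means $\Delta x\leq x$. Property ($\Delta$\L R$2$) reads $\Delta x\iml y\approx\Delta x\iml(\Delta x\iml y)$ and, being an identity, it holds for every $y\in A$; by definition this says precisely that $\Delta x$ is a Tarskian element, i.e. $\Delta x\in T(\A)$. Hence $\Delta x\in T_x$, and in particular $T_x$ is non-empty.

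Next I would show that $\Delta x$ lies above every element of $T_x$. Take any $z\in T_x$. Then $z\leq x$ and $z\in T(\A)$, so $z\iml y=z\iml(z\iml y)$ for every $y\in A$. These are exactly the hypotheses of the quasi-identity ($\Delta$\L R$3$) — read, as intended, with the premise ``$z\iml y\approx z\iml(z\iml y)$'' understood as ``$z$ is Tarskian'' — and its conclusion yields $z\leq\Delta x$. Combining the two paragraphs, $\Delta x\in T_x$ and $z\leq\Delta x$ for every $z\in T_x$, so $\Delta x=\max T_x$, as claimed. (Note that ($\Delta$\L R$4$) plays no role here.)

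There is essentially no obstacle: the statement is an immediate unpacking of ($\Delta$\L R$1$)--($\Delta$\L R$3$) once the order relation and the notion of Tarskian element are written out. The single point that needs care is the reading of ($\Delta$\L R$3$): its antecedent must be interpreted as the predicate ``$z$ is Tarskian'' (a condition quantified over all $y$), not as the equation for one fixed $y$, since the latter holds trivially for $y=z$ and would force $\Delta$ to be the identity map, contradicting ($\Delta$\L R$1$) and the examples already exhibited.
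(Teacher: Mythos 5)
Your proof is correct and is exactly the unpacking the paper has in mind when it calls the statement ``Immediate'': ($\Delta$\L R$1$) and ($\Delta$\L R$2$) place $\Delta x$ in $T_x$, ($\Delta$\L R$3$) gives $z\leq \Delta x$ for every $z\in T_x$, and ($\Delta$\L R$4$) is indeed not needed. Your closing remark on reading the antecedent of ($\Delta$\L R$3$) as ``$z$ is Tarskian'' is a reasonable clarification of the paper's stated intent and does not alter the argument.
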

\begin{dem} Immediate.
\end{dem}

\

\begin{prop}\label{central}
Let $\bf A$ and $\bf A'$  be  $\lukres^{\Delta}$-algebras and let $h:A\to A'$ be a \L ukasiewicz residuation isomorphism. Then, $h(T_x)=T_{h(x)}$; moreover, $h(\Delta x)=\Delta h(x)$.
\end{prop}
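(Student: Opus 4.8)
The plan is to reduce the statement to two elementary facts about the isomorphism $h$: that it restricts to a bijection $T(\A)\to T(\A')$, and that it is an order-isomorphism.

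First I would prove $h(T(\A))=T(\A')$. Since $h$ is an isomorphism we have $h(\A)=\A'$, so Proposition \ref{proptars} applied to $h$ gives $h(T(\A))\subseteq T(\A')$. For the reverse inclusion, recall that the inverse map $h^{-1}\colon \A'\fun \A$ is again a $\lukres$-isomorphism; applying Proposition \ref{proptars} to $h^{-1}$ we get $h^{-1}(t')\in T(\A)$ for every $t'\in T(\A')$, whence $t'=h(h^{-1}(t'))\in h(T(\A))$. Thus $h(T(\A))=T(\A')$.

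Next I would check that $h$ preserves and reflects the order. Using that in a $\lukres$-algebra $z\le x$ iff $z\iml x=1$, together with the facts that $h$ is injective and preserves both $\iml$ and the constant $1$, we get for all $z,x\in A$
\[ z\le x \iff z\iml x=1 \iff h(z)\iml h(x)=h(z\iml x)=h(1)=1 \iff h(z)\le h(x). \]
Combining this with the first step,
\[ h(T_x)=\{\, h(z): z\in T(\A),\ z\le x \,\}=\{\, w\in T(\A'): w\le h(x) \,\}=T_{h(x)}, \]
which establishes the first assertion.

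Finally, for $h(\Delta x)=\Delta h(x)$ I would invoke the proposition immediately above, which asserts that in any $\lukres^{\Delta}$-algebra $\Delta u=\max T_u$; in particular $T_u$ has a greatest element. Since $h$ is an order-isomorphism it carries the greatest element of $T_x$ to the greatest element of its image $h(T_x)$, and by the previous paragraph $h(T_x)=T_{h(x)}$, so
\[ h(\Delta x)=h(\max T_x)=\max h(T_x)=\max T_{h(x)}=\Delta h(x). \]
The only delicate point is the use of $h^{-1}$ in the first step: Proposition \ref{proptars} by itself only yields $h(t)\in T(h(\A))$, so one genuinely needs $h$ surjective to identify $h(\A)$ with $\A'$, and $h^{-1}$ a homomorphism to bring Tarskian elements of $\A'$ back inside $T(\A)$. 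Everything else is a routine unwinding of the definitions; no computation inside the algebras is required beyond the characterization $z\le x \iff z\iml x=1$.
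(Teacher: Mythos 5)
Your proof is correct and follows essentially the same route as the paper: use Proposition \ref{proptars} to see that $h$ sends Tarskian elements to Tarskian elements (and, via injectivity/surjectivity, that preimages of Tarskian elements are Tarskian), note that $h$ preserves and reflects the order, conclude $h(T_x)=T_{h(x)}$, and then transfer the maximum, since $\Delta u=\max T_u$. Your version is just a slightly more modular write-up (invoking $h^{-1}$ with Proposition \ref{proptars} where the paper verifies the Tarskian property of the preimage directly, and replacing the paper's two-inequality argument for $h(\Delta x)=\Delta h(x)$ by the observation that order-isomorphisms preserve greatest elements), with no substantive difference.
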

\begin{proof}
Assume that $z\in h(T_x)$, then there  is $w\in T_x$ such that $h(w)=z$. So, $w\leq x$ and therefore $h(w)\leq h(x)$. Taking into account Proposition \ref{proptars}, we known that $h(w)$ is a Tarskian element. Thus, $z=h(w)\in T_{h(x)}$ as desired. Conversely, let's $y\in T_{h(x)}$ then there is only element $z\in A$ such that $h(z)=y\leq h(x)$ and $z$ is also Tarskian element.  Hence, $h(z\to x)=1$ and taking into account that $h$ is a one-to-one homomorphism, we have that $z\leq x$. Therefore, $y=h(z)\in h(T_x)$.\\
On the other hand, it is clear now that $h(z)\leq  h(\max T_x)$ and so $\max \{h(z) : h(z)\in h(T_x)\}\leq  h(\max \, T_x)$. Since, $h(T_x)=T_{h(x)}$ and the fact that $h$ is one-to-one we have $\Delta h(x)=\max \, T_{h(x)} \leq  h( \max  \, T_x)=h(\Delta x)$. Taking into account $\Delta x=\max T_x$ is a Tarskian element (by ($\Delta\L$R$2$)) such that $\Delta x\leq x$. Hence,  $h(\Delta x)=h(\max \, T_x)\leq h(x)$. In virtue of Proposition \ref{proptars} we have  that $h(\max \, T_x)$ is a Tarskian element, we have therefore $h(\Delta x)=h(\max \,  T_x)\leq \max \, T_{h(x)}=\Delta h(x)$, which completes the proof.
\end{proof}

\

Consider the $\lukres$-algebra $\mbox{\bf \L}_{\infty}=\langle [0,1], \iml, 1\rangle$ of Facts \ref{exam1} (ii). It is easy to check that $T(\mbox{\bf \L}_{\infty})=\{0,1\}$ and if we define $\Delta : [0,1]\fun[0,1]$ as in Example \ref{exampleLn}, then  $\mbox{\bf \L}^{\Delta}_{\infty}=\langle [0,1], \iml, \Delta, 1\rangle$  is a $\lukres^{\Delta}$-algebra. 

\begin{proposition}\label{propsub} Let $S$ be a $\lukres$-subalgebra of $\mbox{\bf \L}_{\infty}$ and let $m=\min S$. Then,  $T(S)\subseteq \{m,1\}$.
\end{proposition}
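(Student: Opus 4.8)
The plan is to show that a $\lukres$-subalgebra $S$ of $\mbox{\bf \L}_{\infty}$ can have at most one Tarskian element besides $1$, namely its minimum $m$. First I would recall from the earlier discussion that $x \in T(S)$ means $x \rightarrowtail y = x \rightarrowtail (x \rightarrowtail y)$ for all $y \in S$, and that in $\mbox{\bf \L}_{\infty}$ the implication is $x \rightarrowtail y = \min\{1, 1-x+y\}$. The key is to analyze what the Tarskian condition forces on an element $x \in S$ with $x \neq 1$.

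The main step is a direct computation: take $x \in S$ with $x < 1$. Applying the Tarskian condition with $y = x$, we get $x \rightarrowtail x = x \rightarrowtail (x \rightarrowtail x)$, which is trivially $1 = 1$ and says nothing; so instead I would test with $y$ close to (or equal to) the minimum, or better, exploit the self-distribution more cleverly. The cleanest route: for $x < 1$ and any $y$ with $y < x$, compute $x \rightarrowtail y = 1 - x + y$ (since $1-x+y < 1$), and then $x \rightarrowtail (x \rightarrowtail y) = x \rightarrowtail (1-x+y) = \min\{1, 1 - x + (1-x+y)\} = \min\{1, 2 - 2x + y\}$. The Tarskian identity $x \rightarrowtail y = x \rightarrowtail(x\rightarrowtail y)$ then forces $1 - x + y = \min\{1, 2-2x+y\}$. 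If $2 - 2x + y \geq 1$, i.e. $y \geq 2x - 1$, this gives $1 - x + y = 1$, i.e. $y = x$, contradicting $y < x$; if $2 - 2x + y < 1$ it gives $1 - x + y = 2 - 2x + y$, i.e. $x = 1$, again a contradiction. Hence there is \emph{no} element $y \in S$ with $y < x$. In other words, any Tarskian element $x \neq 1$ of $S$ must be the minimum of $S$, so $T(S) \subseteq \{m, 1\}$.

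The one point requiring care — and the main (minor) obstacle — is the edge case where there is no $y \in S$ strictly below $x$, i.e. $x$ already equals $m = \min S$; but that case is exactly the conclusion we want, so it is harmless. I would also double-check the borderline subcase $y = 2x - 1$ (if it happens to lie in $S$), where both branches of the $\min$ agree and the identity $1-x+y=1$ still forces $y = x$, hence $x = \tfrac{1}{2}$ would be needed and then $y = 0 = x$, impossible unless $x=1$; in any event the argument that no strictly smaller element exists goes through. Putting this together: if $x \in T(S)$ and $x \neq 1$, then $x$ is a lower bound of $S$, hence $x = m$; therefore $T(S) \subseteq \{m, 1\}$, as claimed. $\Box$
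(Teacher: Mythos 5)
Your proposal is correct and follows essentially the same route as the paper: for a Tarskian element $x$ and $y\leq x$ in $S$, you compute $x\rightarrowtail y=1-x+y$ and $x\rightarrowtail(x\rightarrowtail y)=\min\{1,2-2x+y\}$, and the Tarskian identity then forces $y=x$ or $x=1$, so any Tarskian element other than $1$ must be $\min S$. The paper's proof is the same computation, just phrased with $y\leq z$ instead of your strict-inequality/contradiction formulation.
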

\begin{dem} Let $z\in S$ a Tarskian element. Then, $z\iml y = z\iml(z\iml y)$, for every $y\in S$. Let $y\leq z$  then $z\iml y=\min\{1,1-z+y\} = 1-z+y$. Besides, $z\iml(z\iml y)=\min\{1, 1-z + (1-z+y)\}=\min\{1, 2-2z+y\}$. If $z\iml(z\iml y)=1$, then $z=y$; and, if $z\iml(z\iml y)=2-2z+y$, $z=1$.\\
Therefore, if $m\in S$ we have that $T(S)= \{m,1\}$ and if $m\notin S$, $T(S)= \{1\}$.
\end{dem}

\

\begin{remark}\label{remsub} Let $\A$ be a non-trivial $\lukres^{\Delta}_{\infty}$-algebra and suppose that there exists an $\lukres$-monomorphism  $h:\A\iml \mbox{\bf \L}_{\infty}$. Then, $h(\A)$ is a $\lukres$-subalgebra of $\mbox{\bf \L}_{\infty}$ which has first element. Indeed, since $\A$ is non-trivial, there is $x\in A$ such that $x<1$ and therefore $\Delta x \not=1$. By Proposition \ref{proptars}, $h(\Delta x)$ is a Tarskian element of $h(\A)$ different from $1$, since $h$ is injective.  By Proposition \ref{propsub}, $h(\Delta x)$ is the first element of $h(\A)$. 
\end{remark}

\


Now, we shall consider $\lukres^{\Delta}$-algebras with first element. More precisely, we call $\Delta$-\L ukasiewicz residuation algebra with first element (or $\lukres^{\Delta}_0$-algebra) to any structure $\A=\langle A, \iml, \Delta, 0, 1\rangle$ of type $(2,1,0,0)$ such that (i) the reduct $\langle A, \iml, \Delta, 1\rangle$ is an $\lukres^{\Delta}$-algebra, and (ii) it verifies the equation 
$$0\iml x \approx 1$$ 

\

\noindent It is clear that $\mbox{\bf \L}^{\Delta}_{\infty}=\langle [0,1], \iml, \Delta, 0, 1\rangle$, where $\iml$ is defined as in Facts \ref{exam1} (ii) and $\Delta$ is defined as in Example \ref{exampleLn}, is a $\Delta$-\L ukasiewicz residuation algebra with first element. Then,

\begin{lemma} Let $\A$ be a non-trivial $\lukres^{\Delta}_{0}$-algebra. Then, there exists $X$ and a  map $h:\A\fun \big(\mbox{\bf \L}^{\Delta}_{\infty}\big)^{X}$ such that $h$ is an inmersion, i.e., $h$ is a  $\lukres^{\Delta}_{0}$-monomorphism.
\end{lemma}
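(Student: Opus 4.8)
# Proof proposal for the final lemma

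\textbf{Proof.} The plan is to prove a subdirect-representation-type result: every nontrivial $\lukres^{\Delta}_0$-algebra embeds into a power of $\mbox{\bf \L}^{\Delta}_{\infty}$. The natural strategy is the usual one for such theorems — show that the maximal implicative filters (equivalently, via the congruence correspondence, the congruences with simple quotient) separate points, and that each such quotient embeds into $\mbox{\bf \L}^{\Delta}_{\infty}$. First I would take $X$ to be the set of all maximal implicative $\Delta$-filters of $\A$ (or, equivalently, the set of all $\lukres^{\Delta}_0$-homomorphisms from $\A$ onto simple algebras), and define $h:\A\fun \big(\mbox{\bf \L}^{\Delta}_{\infty}\big)^{X}$ coordinatewise by the associated quotient maps. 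Since $\A$ is nontrivial, for any $a\neq b$ in $A$ (say $a\imn b\neq 1$ or $b\imn a\neq 1$) there is a maximal filter $D$ with $a\imn b\notin D$, by the Lindenbaum–\L o\'s lemma (Lemma~\ref{exmaxnotr}) applied to the finitary Tarskian structure of these algebras; hence the corresponding coordinate separates $a$ and $b$, so $h$ is injective.

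The key remaining step is to show that for each maximal $\Delta$-filter $D$, the quotient $\A/D$ embeds as a $\lukres^{\Delta}_0$-algebra into $\mbox{\bf \L}^{\Delta}_{\infty}$. Here I would argue that $\A/D$ is a simple, hence linearly ordered, $\lukres^{\Delta}_0$-algebra: its $\{\iml,1\}$-reduct is a simple \L ukasiewicz residuation algebra, and by the classical theory of \L ukasiewicz residuation algebras (equivalently Wajsberg algebras with bottom) such an algebra is isomorphic, as a $\lukres_0$-algebra, to a subalgebra of $\mbox{\bf \L}_{\infty}$ — this is the Wajsberg-algebra representation underlying \cite{RS} together with \L ukasiewicz completeness. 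By Proposition~\ref{propsub} the Tarskian elements of such a subalgebra $S$ are contained in $\{m,1\}$ where $m=\min S$; since $S$ has a first element ($D$ being a $\lukres^{\Delta}_0$-filter, $\A/D$ has bottom $\bar 0$), in fact $T(S)=\{m,1\}$. By ($\Delta$\L R$3$)–($\Delta$\L R$2$), $\Delta$ on $\A/D$ sends $x$ to $\max T_x$, which is $1$ if $x=1$ and $m$ otherwise — that is, $\Delta$ agrees with the operator of Example~\ref{exampleLn} restricted to $S$. Hence the $\lukres_0$-embedding of $\A/D$ into $\mbox{\bf \L}_{\infty}$ automatically preserves $\Delta$, giving a $\lukres^{\Delta}_0$-monomorphism $\A/D \hookrightarrow \mbox{\bf \L}^{\Delta}_{\infty}$.

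Combining: the map $h$ whose $D$-th coordinate is the composition $\A \twoheadrightarrow \A/D \hookrightarrow \mbox{\bf \L}^{\Delta}_{\infty}$ is a $\lukres^{\Delta}_0$-homomorphism (each coordinate preserves $\iml$, $\Delta$, $0$, $1$), and it is injective by the separation argument above. Therefore $h:\A\fun \big(\mbox{\bf \L}^{\Delta}_{\infty}\big)^{X}$ is the desired immersion.

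\textbf{The main obstacle} I anticipate is the middle step — verifying cleanly that every simple $\lukres^{\Delta}_0$-algebra is (up to isomorphism) a subalgebra of $\mbox{\bf \L}^{\Delta}_{\infty}$ rather than merely of $\mbox{\bf \L}_{\infty}$. The $\iml$-part is inherited from the classical \L ukasiewicz completeness theorem and Proposition~\ref{propsub}, but one must check that the unique $\Delta$ on a subalgebra $S$ of $\mbox{\bf \L}_{\infty}$ with first element is forced to be the "$0$ below $1$, $1$ at $1$" operator; this is exactly where Proposition~\ref{propsub} (identifying $T(S)$) together with the characterization $\Delta x=\max T_x$ does the work, so the potential difficulty is really just assembling these pieces in the right order. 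A secondary point to be careful about is confirming that the finitary-Tarskian machinery of Section~\ref{sectarsk} applies verbatim to $\lukres^{\Delta}_0$-algebras with the extra constant $0$; this follows because adding the axiom $0\iml x\approx 1$ does not disturb the Lindenbaum–\L o\'s argument. $\Box$
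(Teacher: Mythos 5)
Your overall architecture (separate points by maximal $\Delta$-filters, embed each quotient into $\mbox{\bf \L}^{\Delta}_{\infty}$, assemble the coordinates) is reasonable, but the middle step contains a genuine gap, and it is exactly the step you yourself flag as the main obstacle. From $D$ being a maximal $\Delta$-filter you only obtain that $\A/D$ is simple \emph{as a $\lukres^{\Delta}_{0}$-algebra}, i.e., it has no nontrivial implicative filters \emph{closed under} $\Delta$; this does not make its $\{\iml,0,1\}$-reduct a simple Wajsberg/MV-algebra, so the classical fact ``simple MV-algebras are subalgebras of $[0,1]$'' cannot be invoked. The two notions of simplicity genuinely differ: take Chang's MV-chain $C$ (the chain with an infinitesimal $c$) and define $\Delta x=0$ for $x\neq 1$, $\Delta 1=1$. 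The Tarskian elements of $C$ are exactly $0$ and $1$, so $\Delta x$ is the greatest Tarskian element below $x$ and ($\Delta$\L R$1$)--($\Delta$\L R$4$) hold; the only $\Delta$-filters are $\{1\}$ and $C$ (a $\Delta$-filter containing some $x<1$ contains $\Delta x=0$), so $C$ is $\Delta$-simple and linearly ordered, yet its reduct has many implicative filters and admits no $\lukres_{0}$-embedding into $\mbox{\bf \L}_{\infty}$ at all, since any homomorphism into $[0,1]$ must collapse $c$ with $0$. Hence the chain of inferences ``maximal $\Delta$-filter $\Rightarrow$ simple $\Rightarrow$ linearly ordered $\Rightarrow$ subalgebra of $\mbox{\bf \L}_{\infty}$'' breaks at the last link; what is really needed is semisimplicity of the underlying MV-reduct, and nothing in the Lindenbaum--\L o\'s/filter machinery you invoke supplies it (indeed the example shows that any correct argument must isolate why such algebras cannot occur, a point that also puts pressure on the statement itself). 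A smaller slip: in the separation argument you write $a\imn b$, but $\imn$ is not available in the infinite-valued signature; one separates $a\neq b$ by a maximal $\Delta$-filter avoiding $a\iml b$ or $b\iml a$, obtained by Zorn's lemma rather than by Lemma \ref{exmaxnotr}.

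For comparison, the paper's (very terse) proof does not pass through $\Delta$-filters at all: it applies the representation theorem for MV/Wajsberg algebras (citing \cite{CDM}) directly to the $\Delta$-free reduct of $\A$ to get an $\lukres_{0}$-embedding into a power of $\mbox{\bf \L}_{\infty}$, and then uses Proposition \ref{central}, together with Proposition \ref{propsub} and the surrounding remarks, to conclude that $\Delta$ is automatically preserved, since in each coordinate the Tarskian elements of the image are only bottom and top, forcing $\Delta$ to be the operator of Example \ref{exampleLn}. Your observation that a $0$-preserving embedding of the reduct forces $\Delta$ to agree with the standard operator is the same second ingredient and is fine; the divergence, and the gap, lies entirely in how the embedding of the reduct into a power of $[0,1]$ is obtained, which is the point your quotient-by-maximal-$\Delta$-filters route does not actually secure.
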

\begin{dem} It follows immediately from  the well-known representation theorem for MV-algebras (orWajsberg algebras, see for instance \cite{CDM}) and Proposition \ref{central}.
\end{dem}

\

An immediate consequence of the last theorem is the following corollary.
 
\begin{corollary}{\rm (\cite[Lemma 4]{EGM2001})}\label{central} The quasi-variety of $\lukres^{\Delta}_{0}$-algebras is semi-simple one and the simple algebras of the varieties are $\mbox{\bf \L}^{\Delta}_{\infty}=\langle [0,1], \iml, \Delta, 0, 1\rangle$  and their sub-algebras.
\end{corollary}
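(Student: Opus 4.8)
The plan is to obtain the Corollary as a direct consequence of the preceding Lemma together with a short description of the simple $\lukres^{\Delta}_{0}$-algebras. Recall that the Lemma provides, for every non-trivial $\A\in\lukres^{\Delta}_{0}$, a $\lukres^{\Delta}_{0}$-monomorphism $h:\A\fun(\mbox{\bf \L}^{\Delta}_{\infty})^{X}$ for a suitable set $X$; the whole argument consists in exploiting this embedding.

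First I would check that every non-trivial $\lukres^{\Delta}_{0}$-subalgebra $S$ of $\mbox{\bf \L}^{\Delta}_{\infty}$ is simple. The reduct $\langle S,\iml,0,1\rangle$ is a subalgebra of the standard MV-chain $\langle[0,1],\iml,0,1\rangle$, and it is classical that every subalgebra of the standard MV-chain is a simple MV-algebra. Since any relative congruence $\theta$ of $S$ (that is, any congruence with $S/\theta\in\lukres^{\Delta}_{0}$) is in particular a congruence of this MV-reduct, $\theta$ is the identity relation or the full relation; hence $S$ has exactly two congruences and is simple. The operator $\Delta$ causes no trouble here: on $\mbox{\bf \L}^{\Delta}_{\infty}$ one has $\Delta x=0$ for $x\neq1$, so $\Delta$ is already determined by the $\iml$-reduct, and every congruence of the reduct is automatically compatible with it.

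Next I would read off semisimplicity and the classification from $h$. For $x\in X$ let $\pi_{x}$ denote the $x$-th projection and put $S_{x}:=(\pi_{x}\circ h)(\A)$, a $\lukres^{\Delta}_{0}$-subalgebra of $\mbox{\bf \L}^{\Delta}_{\infty}$. Since $h$ is injective, the maps $\pi_{x}\circ h$ ($x\in X$) separate the points of $\A$, so $\A$ is a subdirect product of the algebras $S_{x}$; as each $S_{x}$ contains $0\neq1$ it is non-trivial, hence simple by the previous paragraph, and therefore $\A$ is a subdirect product of simple algebras. Since $\A$ was an arbitrary non-trivial member of $\lukres^{\Delta}_{0}$, the quasi-variety is semisimple. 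If moreover $\A$ is itself simple, then from $\bigcap_{x\in X}\ker(\pi_{x}\circ h)=\mathrm{Id}_{A}$ and simplicity we obtain $\ker(\pi_{x}\circ h)=\mathrm{Id}_{A}$ for some $x$, so $\pi_{x}\circ h$ is an isomorphism of $\A$ onto $S_{x}\leq\mbox{\bf \L}^{\Delta}_{\infty}$; conversely, by the previous paragraph every non-trivial subalgebra of $\mbox{\bf \L}^{\Delta}_{\infty}$ is simple. Hence, up to isomorphism, the simple algebras are exactly $\mbox{\bf \L}^{\Delta}_{\infty}$ and its subalgebras, as claimed.

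The argument is essentially bookkeeping once the preceding Lemma is available; the only point requiring a little care is the reduction carried out in the second paragraph, namely that the relative congruences of a subalgebra $S\leq\mbox{\bf \L}^{\Delta}_{\infty}$ are congruences of its MV-reduct, so that simplicity of $S$ follows from the classical simplicity of the subalgebras of the standard MV-chain (together with the remark that $\Delta$, being uniquely determined by the $\iml$-reduct on $\mbox{\bf \L}^{\Delta}_{\infty}$, imposes no extra constraint). Everything else is the standard subdirect-representation machinery.
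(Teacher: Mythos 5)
Your proposal is correct and follows essentially the same route as the paper, which simply records the corollary as an immediate consequence of the preceding embedding lemma $h:\A\fun(\mbox{\bf \L}^{\Delta}_{\infty})^{X}$: you make explicit the standard subdirect-representation bookkeeping and the simplicity of the non-trivial subalgebras of $\mbox{\bf \L}^{\Delta}_{\infty}$ (via the classical simplicity of subalgebras of the standard MV-chain, with $\Delta$ adding no further constraint). No gaps; this is a faithful expansion of the argument the paper intends.
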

 
As a by-product of the last Corollary, it is easy to see that the class of $\Delta$-\L ukasiewicz residuation algebras with first element is term-equivalent to the variety of MV$_\Delta$-algebras introduced by H\'ajek, see for instance \cite[pag. 45]{EGM2001}.


\subsection{A $\Delta$-fuzzy logic}  
In this section, we present a logic, by means of a Hilbert systems, whose algebraic counterpart are precisely the 
$\Delta$-\L ukasiewicz residuation algebras with first element. This logic turns out to be a $\Delta$-fuzzy logic  which is an alternative axiomatization  of \L ukasiewiz logic \L$_\Delta$ introduced by H\'ajek in his celebrated book. Let us consider a denumerable set $Var$ of propositional variables, and let $Fm$ be the propositional language generated by $Var$ over the signature $\{\iml, \Delta, \bot, \top\}$, and $\mathfrak{Fm}$ the absolutely free algebra with this signature.

\

\noindent The Hilbert-style calculus $\luklog_{\bot}$ is obtained by axioms ({\bf Ax}1)--({\bf Ax}4) along with the following: \\[2mm]
\begin{equation}\tag{{\bf Ax}9}
\bot \iml \alpha
   \end{equation}
\vspace{-0.5cm}
\begin{equation}\tag{{\bf Ax}10}
\Delta \alpha\iml \alpha
   \end{equation}
   \vspace{-0.5cm}
\begin{equation}\tag{{\bf Ax}11}
(\Delta\alpha \iml \beta)\iml (\Delta \alpha \iml (\Delta \alpha\iml \beta)) 
    \end{equation}
   \vspace{-0.5cm}
\begin{equation}\tag{{\bf Ax}12}
(\Delta \alpha \to (\Delta \alpha\iml \beta)) \to (\Delta\alpha \to \beta) 
    \end{equation}
    \vspace{-0.5cm}
    \begin{equation}\tag{{\bf Ax}13}
     \Delta(\alpha \iml \beta) \iml (\Delta \alpha \iml \Delta \beta)
    \end{equation}

\noindent and the rules are {\em modus ponens} along with 
 \begin{equation}\tag{QGEN}
 \displaystyle\frac{(\gamma \iml \beta) \iml (\gamma \iml (\gamma\iml \beta)), (\gamma\iml \beta) \iml (\gamma \iml (\gamma \iml \beta)),\gamma\iml \alpha}{\gamma \iml\Delta \alpha} 
\end{equation}

\noindent We also  consider  non-primitive connectives $\vee$, $\sim$ and $\wedge$ defined as follows: $$\alpha \vee \beta:= (\alpha \rightarrow \beta)\rightarrow \beta,$$
 $$\sim \alpha:=\alpha \iml \bot,$$
 $$ \alpha\wedge \beta:= \sim(\sim \alpha \vee \sim \beta).$$

It is clear that the logic $\luklog_{\bot}$ is an alternative presentation to the one given in  \cite[pag. 45]{EGM2001} for \L ukasiewicz logic with $\Delta$ in virtue of Corollary \ref{central}. So, we can present Adequacy Theorems both for the propositional level and the quantified version in a same way to the one's given by H\'ajek in his book, \cite{PH}.

\section{Conclusions}

In the present paper we have studied Bazz's $\Delta$ operator in the context of $\{\iml,\top \}$-fragment of \L ukasiewicz logic.  In first palce, we study the class of $n$-valued \L ukasiewicz residuation algebras expanded with $\Delta$. We prove important properties of these algebras and calculate the cardinality of the free algebra with a finite number of free generators. Then, we propose a family of $n$-valued logics for which the $n$-valued \L ukasiewicz residuation algebras expanded with $\Delta$ are an algebraic counterpart. Besides, we propose a suitable fisrt-order version of these logics and prove the corresponding soundness and completeness theorems. Finally, we present the infinite-valued $\Delta$-\L ukasiewicz residuation algebras with bottom which turns out to be an alternative presentation   for \L ukasiewicz logic with $\Delta$ given in  \cite{EGM2001}, for instance.

It remains open the study of infinite-valued $\Delta$-\L ukasiewicz residuation algebras without considering a first element.



\bibliographystyle{plain}

\begin{thebibliography}{99}



\bibitem{Baaz} M. Baaz, {\em Infinite-valued G\"odel logics with $0$-$1$-projections and relativizations}. In  G\"ODEL 96, LNL {\bf 6}, H\'ajek P. (Ed.), Springer-Velag, 23--33, 1996.

 
\bibitem{BB}  J. Berman and W. Blok. {\em Free \L ukasiewicz and hoop residuation algebras.} Studia Logica, 77, 153--180, 2004.

\bibitem{BFS} V. Boicescu and A. Filipoiu and G. Georgescu and S. Rudeanu, {\em \L ukasiewicz - Moisil Algebras}, Annals of Discrete Mathematics 49, North - Holland, 1991.

\bibitem{BlFe}  W. Blok and I. Ferreirim, {\em On the structure of hoop.} Algebra Universalis {\bf 43} (2000) 233--257.

\bibitem{Bos1}  B. Bosbach {\em Komplement\"are Halbgruppen. Axiomatik und Arithmetik}, Fund. Math. {\bf 64} (1969), 257--287.

\bibitem{Bos2}  B. Bosbach {\em Komplement\"are Halbgruppen. Kongruenzen  und Quotienten}, Fund. Math. {\bf 69} (1970),  1--14.

\bibitem{BuOw}  J. B\"uchi and T. Owens {\em Complemented monoids and hoops},  unpublished manuscript.


\bibitem{CC1} Carnielli, W. and M. Coniglio, {\em Paraconsistent Logic: Consistency, Contradiction and Negation}, vol. 40, Logic, Epistemology, and the Unity of Science, Basel, Switzerland: Springer International Publishing, 2016.

\bibitem{CN2} P. Cintula  and C. Noguera, {\em A Henkin-Style Proof of Completeness for First-Order Algebraizable Logics}, Journal of Symbolic Logic 80, no. 1 (2015): 341--58.



\bibitem{CDM} R. Cignoli, I.  D'Ottaviano and D. Mundici, {\em Algebraic foundations of many-valued reasoning}, Trends in Logic Studia Logica Library, 7. Kluwer Academic Publishers, Dordrecht, 2000. x+231 pp. 



\bibitem{Itala85-t} I. D’Ottaviano, {\em  Sobre uma Teoria de Modelos Trivalente
{\rm (On a three-valued model theory, in Portuguese).}} PhD thesis, IMECC, State University of Campinas, Brazil, 1982.

\bibitem{Itala85-d} I. D’Ottaviano, {\em The completeness and compactness of a
three-valued first-order logic.} Revista Colombiana de Matemáticas, XIX(1-2):77–94, 1985.


\bibitem{Itala85}  I. D’Ottaviano,  {\em The model extension theorems for J3-theories. In Methods in Mathematical Logic}, Proceedings of the 6th Latin American Symposium on Mathematical Logic, 1130,  Lecture Notes in Mathematics. Springer, Berlin, 157--173, 1985.

\bibitem{Itala87} I. D’Ottaviano,  {\em Definability and quantifier elimination for J3-theories}, Studia Logica, 46, 37--54, 1987.

 \bibitem{EG-01} F. Esteva and L. Godo, {\em Monoidal t-norm based logic: towards a logic for left-continuous t-norms}, Fuzzy Sets and Systems, 124, 271--288, 2001.

\bibitem{EGM2001} F. Esteva, L. Godo, and F. Montagna, {\em The \L $\Pi$ and The \L $\Pi \frac{1}{2}$  logics: Two complete fuzzy systems joining Lukasiewicz and product logics}, Archive for Mathematical Logic, vol. 40 no. 1, pp. 39-67, 2001.

\bibitem{EGHN2000}  F. Esteva, L. Godo, P. H\'ajek and M. Navara, {\em Residuated fuzzy logics with an involutive negation}, Archive for Mathematical Logic 39, 103--124, 2000.

\bibitem{EFFG} F. Esteva, A. Figallo-Orellano, T.  Flaminio and L. Godo. {\em Logics of Formal Inconsistency Based on Distributive Involutive Residuated Lattices},  J.  Log.  Comput., 31(5): 1226–1265, 2021.  

\bibitem{AVF0} A. V. Figallo,  {\em \'Algebras implicativas de {\L}ukasiewicz  $(n+1)$-valuadas con diversas operaciones}. Ph. D. Dissertation. Universidad Nacional del Sur, 1990.

\bibitem{AVF} A. V. Figallo, {\em $I\Delta_3$-algebras}. Reports on Mathematical Logic, 24, 3--16, 1990

\bibitem{AVF3} A. V. Figallo, {\em $I_3-\nabla$ algebras.} Revista Colombiana de Matematicas. , v.17, n.3-4, p. 105 - 116, 1983.

\bibitem{AVF1}  A. V. Figallo and M. Figallo, {\em An algebraic construction of Moisil operators in $(n + 1)$-valued Łukasiewicz propositional calculus.} Journal of Multiple-Valued Logic and Soft Computing, 21,
131--145, 2013

\bibitem{AVF2} A.V. Figallo, A. Figallo Jr., M. Figallo, A. Ziliani, {\em \L ukasiewicz residuation algebras with infimum}, Demonstratio Mathematica 40(2007), no. 4, 751--758.

\bibitem{GPS2020} A.V. Figallo, G. Pelaitay and J. Sarmiento, {\em $C_n$ algebras with Moisil possibility operators}, Logic Journal of the IGPL, 28:6, 1141–-1154, 2020.

 \bibitem{FOS1} A. Figallo-Orellano and J. Slagter, {\em  Monteiro's algebraic notion of maximal consistent theory for Tarskian logics}, Fuzzy Sets and Systems, Volume 445,
2022, Pages 90-122.


\bibitem{FZF1} A. Figallo Jr., M. Figallo and A. Ziliani, {\em Free $(n+1)$-valued Lukasiewicz BCK-algebras}, Demonstratio Math., 2, 37(2004), 245-254.

\bibitem{PH} P. H\'ajek, {\em Metamathematics of Fuzzy Logic}, volume 4 of Trends in Logic. Kluwer, Dordrecht, 1998.

\bibitem{HC} P. Hájek  and P. Cintula, {\em On Theories and Models in Fuzzy Predicate Logics}, Journal of Symbolic Logic 71 , no. 3, 863--80, 2006.


\bibitem{Komo} Y. Komori, {\em The separation theorem of the $\aleph_0$-valued Lukasiewicz propositional logic.} In Reports of the Faculty of Sciences, vol. 12, pp. 1--5. Shizuoka University, 1978.

\bibitem{Komo1} Y. Komori, {\em Super-\L ukasiewicz implicational logics}. Nagoya Math. J. 72, 127--133, 1978.

\bibitem{RA} H. Rasiowa, {\em An algebraic approach to non-clasical logics}, Studies in logic and the foundations of mathematics, vol. 78. North-Holland Publishing Company, Amsterdam and London, and American Elsevier Publishing Company, Inc., New York, 1974.

\bibitem{RS} A. J. Rodrigues, Un studio algebraico de los cálculos proposicionales de \L ukasiewicz, PhD  Dissertation. Universitat de Barcelona, 1980.


\bibitem{W} W\'ojcicki, R., {\em Lectures on propositional calculi}, Ossolineum, Warsaw, 1984.

\end{thebibliography}

\end{document}